\definecolor{linkblue}{HTML}{003d73}
\definecolor{linkgreen}{HTML}{006161}
\definecolor{linkred}{HTML}{a11950}
\crefname{thm}{theorem}{theorems}
\crefname{cor}{corollary}{corollaries}
\crefname{question}{question}{questions}
\crefname{prop}{proposition}{propositions}
\newtheorem{thm}{Theorem}[section]
\newtheorem*{thm*}{Theorem}
\newtheorem{prop}[thm]{Proposition}
\newtheorem{lem}[thm]{Lemma}
\newtheorem{cor}[thm]{Corollary}
\newtheorem{conj}[thm]{Conjecture} 
\theoremstyle{definition}
\newtheorem{definition}[thm]{Definition}
\newtheorem{example}[thm]{Example}
\newtheorem{remark}[thm]{Remark}
\newtheorem{question}[thm]{Question}
\newcommand{\R}{\mathbb{R}}
\newcommand{\Z}{\mathbb{Z}}
\newcommand{\mathh}{\mathbb{H}}
\newcommand{\C}{\mathbb{C}}
\newcommand{\K}{\mathbb{K}}
\newcommand{\Gr}[2]{\operatorname{Gr}_{#1}(\C^{#2})}
\newcommand{\Id}{\mathbb{I}}
\newcommand{\tr}{\operatorname{tr}}
\newcommand{\unitary}{\operatorname{U}}
\newcommand{\hermitian}{\mathcal{H}}
\newcommand{\Ad}{\operatorname{Ad}}
\renewcommand{\Im}{\operatorname{Im}}
\newcommand{\ad}{\operatorname{ad}}
\newcommand{\oframes}{\mathcal{OF}^{\K^d,\bk}}
\newcommand{\coframes}{\mathcal{OF}^{\C^d,\bk}}
\newcommand{\roframes}{\mathcal{OF}^{\R^d,\bk}}
\newcommand{\pframes}{\mathcal{PF}^{\C^d,\bk}}
\newcommand{\fframes}{\mathcal{FF}^{\K^d,\bk}}
\newcommand{\cfframes}{\mathcal{FF}^{\C^d,\bk}}
\newcommand{\rfframes}{\mathcal{FF}^{\R^d,\bk}}
\newcommand{\gtorus}{\operatorname{U}(k_1) \times \dots \times \operatorname{U}(k_N)}
\newcommand{\spec}{\operatorname{spec}}
\newcommand{\FFP}{\operatorname{FFP}}
\newcommand{\OFP}{\FFP}
\newcommand{\EFP}{\operatorname{EFP}}
\newcommand{\mbf}[1]{\boldsymbol{#1}}
\newcommand{\grad}{\operatorname{grad}}
\newcommand{\rk}{\operatorname{rk}}
\newcommand{\SL}{\operatorname{SL}}
\newcommand{\GL}{\operatorname{GL}}
\newcommand{\spa}{\operatorname{span}}
\newcommand{\row}{\operatorname{row}}
\newcommand{\br}{\boldsymbol{r}}
\newcommand{\bk}{\boldsymbol{k}}
\newcommand{\blam}{\boldsymbol{\lambda}}
\newcommand{\bmu}{\boldsymbol{\mu}}
\newcommand{\bLam}{\boldsymbol{\Lambda}}
\newcommand{\subalign}[1]{%
  \vcenter{%
    \Let@ \restore@math@cr \default@tag
    \baselineskip\fontdimen10 \scriptfont\tw@
    \advance\baselineskip\fontdimen12 \scriptfont\tw@
    \lineskip\thr@@\fontdimen8 \scriptfont\thr@@
    \lineskiplimit\lineskip
    \ialign{\hfil$\m@th\scriptstyle##$&$\m@th\scriptstyle{}##$\hfil\crcr
      #1\crcr
    }%
  }%
}
\title{Fusion Frame Homotopy and Tightening Fusion Frames by Gradient Descent}
\author[$\ast$]{Tom Needham}
\author[$\dag$]{Clayton Shonkwiler}
\affil[$\ast$]{Department of Mathematics, Florida State University, Tallahassee, FL} 
\affil[$\dag$]{Department of Mathematics, Colorado State University, Fort Collins, CO}
\date{}
\begin{document}

\maketitle

\begin{abstract}
Finite frames, or spanning sets for finite-dimensional Hilbert spaces, are a ubiquitous tool in signal processing. There has been much recent work on understanding the global structure of collections of finite frames with prescribed properties, such as spaces of unit norm tight frames. We extend some of these results to the more general setting of fusion frames---a fusion frame is a collection of subspaces of a finite-dimensional Hilbert space with the property that any vector can be recovered from its list of projections. The notion of tightness extends to fusion frames, and we consider the following basic question: is the collection of tight fusion frames with prescribed subspace dimensions path connected? We answer (a generalization of) this question in the affirmative, extending the analogous result for unit norm tight frames proved by Cahill, Mixon and Strawn. We also extend a result of Benedetto and Fickus, who defined a natural functional on the space of unit norm frames (the frame potential), showed that its global minimizers are tight, and showed that it has no spurious local minimizers, meaning that gradient descent can be used to construct unit-norm tight frames. We prove the analogous result for the fusion frame potential of Casazza and Fickus, implying that, when tight fusion frames exist for a given choice of dimensions, they can be constructed via gradient descent. Our proofs use techniques from symplectic geometry and Mumford's geometric invariant theory.
\end{abstract}

\section{Introduction}

A \emph{frame} is a spanning collection of vectors in a Hilbert space which satisfies a certain Parseval-like condition~\cite{Duffin:1952kh}. Frames are important in the context of signal processing, where a signal is modeled as a vector in a Hilbert space and is encoded by the list of its inner products with the vectors in the frame~\cite{Daubechies:1986kk}. A frame is generally \emph{overcomplete} (i.e., linearly dependent), a property which is useful in applications because the resulting signal representations are, by virtue of their redundancy, more robust to noise than representations in a basis. For practical reasons, there has been increased interest in finite frames, that is, spanning sets of vectors for finite-dimensional Hilbert spaces; see  \cite{Casazza:2013ft} and \cite{Waldron:2018wc} for general surveys. For the rest of the paper, we will only consider finite frames and will therefore work under the simplifying convention that our Hilbert space is $\K^d$, $\K = \R$ or $\C$, endowed with the standard inner product $\langle \cdot, \cdot \rangle$, with standard norm denoted $\|\cdot \|$.

Typically, one requires a frame to satisfy certain norm or spectral constraints. For example, a frame $\{f_1,\ldots,f_N\}$ for $\K^d$ is called \emph{tight} if the \emph{frame operator} map  $v \mapsto \sum_{i=1}^N \langle v, f_i \rangle f_i$ is a constant multiple of the identity map on $\K^d$---tight frames are of particular interest, since they guarantee optimal robustness under certain noise models~\cite{Casazza:2003vp, Holmes:2004iv}. Equivalently, a frame is tight if the spectrum of its frame operator is constant. The collection of all length-$N$ frames for $\K^d$ with prescribed frame vector norms and frame operator spectrum defines a complicated algebraic variety, and topological and geometrical properties of these varieties have been the focus of a growing body of recent research \cite{dykema2003manifold,Cahill:2013fy,cahill2017connectivity,Needham:2021bi,needham_toric_2021,needham2022admissibility}. 

The goal of this paper is to extend results on spaces of frames to the setting of a more general signal processing tool: a \emph{fusion frame} is an ordered collection $(\mathcal{S}_1,\ldots,\mathcal{S}_N)$ of subspaces of $\K^d$ such that the \emph{frame operator}
$
	v \mapsto \sum_{i=1}^N P_i v
$
is invertible (and hence necessarily positive definite), where $P_i:\K^d \to \mathcal{S}_i$ is orthogonal projection. Observe that if all subspaces are 1-dimensional, this essentially reduces to the definition of a (classical) frame. Fusion frames were introduced by Casazza and Kutyniok in \cite{casazza_frames_2004} as a hierarchical approach to the construction of large frames with desirable properties. Fusion frames were subsequently developed into a general  tool for distributed data processing \cite{Casazza:2008ku,casazza_modeling_2007,kutyniok_robust_2009}---the basic idea is that factors such as hardware limitations may require a collection of local vector-valued signal measurements to be coherently and robustly fused into a global measurement. Fusion frames have more recently been applied to compressed sensing for structured signals or low quality  measurement modalities~\cite{ayaz2016uniform,xia2017nonuniform,aceska2020local}.

As in the case of (classical) frames, there is typically a focus on fusion frames with extra structure. For instance, the notion of a tight frame generalizes to that of a \emph{tight fusion frame}---this is a fusion frame such that the frame operator is a multiple of the identity map. We study the topology and geometry of spaces of tight fusion frames, as well as spaces of fusion frames with more general prescribed data. We give precise formulations of our results in subsections \ref{sub:fusion frame homotopy} and \ref{sub:optimization} below, but our main contributions are described informally as follows.

\begin{itemize}
\item Fusion Frame Homotopy Theorem (\Cref{thm:main}).

We show that the space of tight fusion frames in a complex Hilbert space with a prescribed sequence of subspace dimensions is path connected. This gives a generalization of the (complex) Frame Homotopy Theorem, which says that the space of length-$N$ tight frames for a complex Hilbert space, whose frame vectors are all unit norm, is path connected. This was proved by Cahill, Mixon and Strawn in~\cite{cahill2017connectivity}, affirming a conjecture of Larson from 2002 (although the conjecture was first published in~\cite{dykema2003manifold}). The Frame Homotopy Theorem was generalized by the authors of the present paper to spaces of frames with more general prescribed spectral and norm data using techniques from symplectic geometry~\cite{Needham:2021bi} and to spaces of quaternionic frames using the theory of isoparametric submanifolds~\cite{Palais:1988ks}. We once again apply symplectic techniques to prove the analogous result for fusion frames---in fact, our techniques work in much greater generality, and we are able to prove a connectivity result for spaces of \emph{operator-valued frames}. Our result is described in detail below in \Cref{sub:fusion frame homotopy}.

\item Benedetto--Fickus Theorem for Fusion Frames (\Cref{thm:descent}).

In~\cite{benedetto2003finite}, Benedetto and Fickus introduced the \emph{frame potential}, a natural energy functional on the space of frames consisting of a fixed number of unit vectors in a fixed Hilbert space. They showed that the global minimizers of the frame potential are tight frames and proved the surprising result that the frame potential has no spurious local minimizers, meaning that tight frames can reliably be generated via gradient descent---we refer to this result (\!\!\cite[Theorem 7.1]{benedetto2003finite}, also stated below as \Cref{thm:benedetto fickus}) as the \emph{Benedetto--Fickus theorem}. Casazza and Fickus defined a more general functional on the space of fusion frames, called the \emph{fusion frame potential}, and characterized its minimizers~\cite{casazza_minimizing_2009}. We extend the Benedetto--Fickus theorem to give general conditions which guarantee that the fusion frame potential has no spurious local minimizers (\Cref{thm:descent}), which implies that if tight fusion frames exist in a given space of fusion frames, they can always be reached by gradient descent (\Cref{cor:local minima are global}). Together with Mixon and Villar, we gave several strengthenings of the Benedetto--Fickus theorem in~\cite{mixon_three_2021}, one of which was proved using ideas from Mumford's \emph{geometric invariant theory} (GIT)~\cite{mumford_geometric_1994}. \Cref{thm:descent} is proved by extending this application of GIT to the fusion frame setting. We precisely state and further contextualize our result below in \Cref{sub:optimization}.

\end{itemize}

The structure of the paper is follows. The remaining subsections of the introduction pin down exact definitions, set notation, and give precise statements of our main results. \Cref{sec:symplectic_machinery} provides necessary background on symplectic geometry, in preparation for the proof of the Fusion Frame Homotopy Theorem (\Cref{thm:main}), which is then proved in \Cref{sec:sg frames}. The Benedetto--Fickus Theorem for Fusion Frames (\Cref{thm:benedetto fickus}) is proved in \Cref{sec:benedetto fickus}, after introducing the main ideas of GIT. We remark that the exposition about symplectic geometry and GIT in \Cref{sec:symplectic_machinery,sec:benedetto fickus}, respectively, is intended to be accessible to non-experts in these fields. The paper concludes with a discussion of open problems and future directions in \Cref{sec:discussion}.

\subsection{Fusion Frame Homotopy} 
\label{sub:fusion frame homotopy}

Recall that a \emph{tight fusion frame} (TFF) is a fusion frame $(\mathcal{S}_1,\ldots,\mathcal{S}_N)$ such that the frame operator $\sum_i P_i$ is a multiple of the identity. If $k_i = \dim(\mathcal{S}_i) = \rk(P_i)$ then, since all nonzero eigenvalues of an orthogonal projector are equal to 1, 
\[
	\tr\left(\sum_{i=1}^N P_i\right) = \sum_{i=1}^N \tr(P_i) = \sum_{i=1}^N k_i =:n,
\]
so it must be the case that a TFF has frame operator equal to $\frac{n}{d} \Id_d$, where $\Id_d$ is the identity operator on $\K^d$. Since the $P_i$ uniquely determine and are uniquely determined by the $\mathcal{S}_i$, we will also call $(P_1, \dots , P_N)$ a (tight) fusion frame when the corresponding $(\mathcal{S}_1, \dots , \mathcal{S}_N)$ is.

With the frame homotopy conjecture~\cite{dykema2003manifold} (resolved by Cahill, Mixon, and Strawn in 2017~\cite{cahill2017connectivity}) in mind, the following is a very natural question:

\begin{question}\label{q:fusion frame homotopy}
	Is the space of TFFs in $\K^d$ with given ranks $(k_1, \dots, k_N)$ path-connected?
\end{question}

The first goal of this paper is to show that the answer to \Cref{q:fusion frame homotopy} is always ``yes'' for complex fusion frames (i.e., when $\K = \C$). In fact, we will prove a much more general theorem about spaces of operator-valued frames with fixed spectral data. 

To motivate the definition given below, notice that, for a fusion frame $(P_1, \dots , P_N)$, each projector $P_i$ has a full-rank square root $A_i: \K^d \to \K^{k_i}$ so that $A_i^\ast A_i = P_i$. This square root is unique up to composing with a unitary transformation of the codomain:

\begin{prop}[{\!\!\cite[Theorem~7.3.11]{horn_matrix_2013}}]\label{prop:indeterminacy}
	Suppose $A: \K^d \to \K^k$ and $B: \K^d \to \K^k$ are linear maps. Then $A^\ast A = B^\ast B$ if and only if there exists a unitary transformation $U \in \unitary(k)$ so that $B = UA$. 
\end{prop}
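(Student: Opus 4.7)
The ``if'' direction is immediate: if $B = UA$ with $U^\ast U = \Id_k$, then $B^\ast B = A^\ast U^\ast U A = A^\ast A$. So the work is in the ``only if'' direction, and the plan is to construct a unitary $U$ by first defining it on $\range(A)$ via the prescribed formula $U(Av) = Bv$, then extending arbitrarily on the orthogonal complement.

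Starting from $A^\ast A = B^\ast B$, I would first observe that for all $v \in \K^d$,
\[
    \|Av\|^2 = \langle A^\ast A v, v\rangle = \langle B^\ast B v, v\rangle = \|Bv\|^2,
\]
which in particular gives $\ker A = \ker B$. This is precisely the well-definedness condition for the prescription $U_0: \range(A) \to \range(B)$, $U_0(Av) := Bv$: if $Av = Av'$ then $v - v' \in \ker A = \ker B$, so $Bv = Bv'$. Polarizing the identity $\|Av\|^2 = \|Bv\|^2$ (over $\R$ or $\C$ as appropriate) yields $\langle Au, Av\rangle = \langle Bu, Bv\rangle$ for all $u,v \in \K^d$, so $U_0$ is a linear isometry from $\range(A)$ onto $\range(B)$.

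To promote $U_0$ to a unitary on all of $\K^k$, I would note that $\dim \range(A) = \rk(A) = \rk(A^\ast A) = \rk(B^\ast B) = \dim \range(B)$, so the orthogonal complements $\range(A)^\perp$ and $\range(B)^\perp$ in $\K^k$ have equal dimension. Pick any linear isometry $U_1: \range(A)^\perp \to \range(B)^\perp$ (for instance, send any chosen orthonormal basis of the former to any chosen orthonormal basis of the latter) and set $U := U_0 \oplus U_1$ with respect to the orthogonal decompositions $\K^k = \range(A) \oplus \range(A)^\perp = \range(B) \oplus \range(B)^\perp$. Then $U$ is unitary and $UA = U_0 A = B$ by construction.

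There is no real obstacle here beyond bookkeeping; the only subtle point is confirming that the map $Av \mapsto Bv$ is well-defined before extending, and this is handled by the kernel equality derived from $A^\ast A = B^\ast B$. The result can alternatively be read off from the singular value decompositions of $A$ and $B$, which must share the same right singular vectors and singular values since these are determined by $A^\ast A = B^\ast B$; the residual freedom in the left singular vectors is exactly the unitary $U$.
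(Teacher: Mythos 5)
The paper does not prove this proposition; it cites it directly from Horn and Johnson~\cite[Theorem~7.3.11]{horn_matrix_2013}, so there is no in-paper proof to compare against. Your argument is correct and is the standard one: derive $\|Av\| = \|Bv\|$ from $A^\ast A = B^\ast B$, conclude $\ker A = \ker B$, use that to well-define the isometry $Av \mapsto Bv$ on $\range(A)$, polarize to check it preserves inner products, and then extend by an arbitrary isometry between the equal-dimensional orthogonal complements of the ranges in $\K^k$. The observation at the end about singular value decompositions is a valid alternative route and is essentially what Horn and Johnson's treatment amounts to, but your direct construction is cleaner and avoids invoking the SVD. No gaps.
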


Hence, up to this indeterminacy, we can also think of a fusion frame as a collection of operators $(A_1, \dots , A_N)$ so that $A_i^\ast A_i$ is an orthogonal projector for each $i=1,\dots , N$ and so that $A_1^\ast A_1 + \dots + A_N^\ast A_N$ is positive definite. This is the definition we will generalize by relaxing the condition on the individual $A_i^\ast A_i$:

\begin{definition}\label{def:operator-valued frames}
	Let $d, N, k_1,\dots ,k_N$ be positive integers. Let $\bk :=(k_1, \dots , k_N)$. Then an \emph{operator-valued frame} of type $(d, \bk)$ is an $N$-tuple $\mbf{A}:=(A_1, \dots, A_N)$ of linear maps $A_i:\K^d \to \K^{k_i}$ so that the \emph{frame operator}
	\[
		S_{\mbf{A}} := \sum_{i=1}^N A_i^\ast A_i
	\]
	 is positive definite. The space of operator-valued frames of type $(d, \bk)$ will be denoted $\oframes$.
\end{definition}

This definition is essentially the specialization to finite dimensions of Kaftal, Larson, and Zhang's definition~\cite{Kaftal:2009ek} of an operator-valued frame. As with fusion frames, we will define $P_i := A_i^\ast A_i$, so that the frame operator is $\sum_i P_i$. In practice, we will make the simplifying assumption $\rk(A_i) = k_i$, by restricting the codomain of $A_i$ to its image if necessary.

The feature that distinguishes fusion frames among the operator-valued frames is that the $P_i$ are orthogonal projectors of rank $k_i$, which means precisely that the $k_i$ nonzero eigenvalues of $P_i$ are all equal to 1. More generally, we can consider spaces of operator-valued frames with fixed spectral data:

\begin{definition}\label{def:fixed spectra}
	Let $d, N, k_1, \dots , k_N$ be positive integers and $\bk:=(k_1, \dots , k_N)$. For each $i$, let $\br_i =(r_{i1}, \dots , r_{ik_i})$ with $r_{i1} \geq \dots \geq r_{ik_i} > 0$,\footnote{Remember our simplifying assumption that the $A_i$ should be full rank.} and let $\br = (\br_1, \dots , \br_N)$. $\oframes(\br)$ will denote the space of operator-valued frames $(A_1, \dots , A_N)$ of type $(d,\bk)$ for which $P_i = A_i^\ast A_i$ has nonzero eigenvalues equal to $\br_i$. Equivalently, each $r_{ij} = \sigma_{ij}^2$, where the $\sigma_{ij}$ are the singular values of $A_i$.
\end{definition}

\begin{example}\label{ex:frames}
    Let $\bk = (k_1,\ldots,k_N)$ and let $\br = (\br_1,\ldots,\br_N)$ such that every $\br_i$ is a list of $k_i$ ones. Then $\mathcal{OF}^{\K^d,\bk}(\br)$ is equivalent to the space of fusion frames with prescribed ranks $\bk$. Indeed, it is the space of $\mbf{A} = (A_1, \dots , A_N)$ so that $P_i = A_i^\ast A_i$ is a rank-$k_i$ orthogonal projector and the frame operator $S_{\mbf{A}} = \sum_i P_i$ is positive-definite. Since this space is of particular interest, we use the specialized notation $\fframes := \mathcal{OF}^{\K^d,\bk}(\br)$. As a special case, if $k_i = 1$ for all $i$ then $\fframes$ is equivalent to the space of unit-norm frames of length $N$ in $\K^d$.
\end{example}

Tight fusion frames are also distinguished among fusion frames by spectral data since multiples of the identity are uniquely determined by their spectra: $\lambda \Id_d$ is the only operator with spectrum $(\lambda, \dots , \lambda)$. Fixing the spectral data of the frame operator is more natural than fixing the frame operator itself, since we can always diagonalize the frame operator by precomposing the $A_i: \K^d \to \K^{k_i}$ by a common unitary transformation of the domain. Hence, a natural generalization of tight frames is the collection of operator-valued frames with fixed spectrum of their frame operator:

\begin{definition}\label{def:fixed frame operator spectrum}
	Let $d, N, k_1, \dots , k_N$ be positive integers  and $\bk:=(k_1, \dots , k_N)$. Let $\blam = (\lambda_1, \dots , \lambda_d)$ with $\lambda_1 \geq \dots \geq \lambda_d > 0$. $\oframes_{\blam}$ will denote the space of operator-valued frames $(A_1, \dots, A_N)$ whose frame operator $S_{\mbf{A}} = \sum_i A_i^\ast A_i$ has spectrum $\blam$.
\end{definition}

\begin{example}
    Let $\bk = (k_1,\ldots,k_N)$ and let $\blam = (1,\ldots,1)$ be the list of $d$ ones.  Then $\mathcal{OF}^{\K^d,\bk}_{\blam}$ could reasonably be called the space of Parseval operator-valued frames of type $(d, \bk)$, by analogy with the case $k_i = 1$ for all $i$, when $\mathcal{OF}^{\K^d,\bk}_{\blam}$ is equivalent to the space of Parseval frames of length $N$ in $\K^d$ (that is, frames whose frame operator is the identity).
\end{example}

Of course, the definition of a tight fusion frame includes both fixed spectral data of the $P_i$ and fixed spectral data of the frame operator, so it involves intersecting two of the spaces defined above. In that spirit, define
\[
	\oframes_{\blam}(\br):= \oframes_{\blam} \cap \oframes(\br).
\]

Hence, the operator-valued generalization of \Cref{q:fusion frame homotopy} is the following:

\begin{question}\label{q:operator-valued frame homotopy}
	For given $d,N,\bk, \br, \blam$, is the space $\oframes_{\blam}(\br)$ path-connected?
\end{question}

In the case of complex classical frames (i.e., $\K = \C$ and $\bk = (1, \dots , 1)$, but general $\blam$ and $\br$) we answered this question in the affirmative using symplectic geometry~\cite{Needham:2021bi}. Here, we give an affirmative answer for general complex operator-valued frames:

\begin{thm}\label{thm:main}
	The space $\coframes_{\blam}(\br)$ is always path-connected.
\end{thm}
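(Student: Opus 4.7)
The plan is to realize $\coframes(\br)$ as the total space of a principal bundle over a product of coadjoint orbits of $\unitary(d)$, with the frame operator descending to a symplectic moment map, and then to invoke a standard connectedness theorem. Let $G = \unitary(k_1) \times \dots \times \unitary(k_N)$ act on $\coframes(\br)$ by $(U_1, \dots, U_N) \cdot (A_1, \dots, A_N) = (U_1 A_1, \dots, U_N A_N)$. By \Cref{prop:indeterminacy} and the full-rank convention, this action is free and proper, and the orbit of $\mbf{A}$ is determined by the tuple $(P_1, \dots, P_N)$ of positive semidefinite Hermitian matrices $P_i = A_i^\ast A_i$, each having spectrum $(r_{i1}, \dots, r_{ik_i}, 0, \dots, 0)$. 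Hence the quotient map $q: \coframes(\br) \to \coframes(\br)/G$ identifies the base with the product $M = \mathcal{O}_1 \times \dots \times \mathcal{O}_N$ of adjoint orbits of $\unitary(d)$ in $\hermitian(d)$, each a compact connected symplectic manifold under its Kirillov--Kostant--Souriau form.

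The residual $\unitary(d)$-action on $\coframes(\br)$ by precomposition, $V \cdot (A_1, \dots, A_N) = (A_1 V^\ast, \dots, A_N V^\ast)$, commutes with $G$ and descends on $M$ to diagonal conjugation $V \cdot (P_1, \dots, P_N) = (V P_1 V^\ast, \dots, V P_N V^\ast)$. Under the standard identification $\mathfrak{u}(d)^\ast \cong \hermitian(d)$, this action is Hamiltonian with moment map
\[
\mu: M \to \hermitian(d), \qquad \mu(P_1,\dots,P_N) = P_1 + \dots + P_N,
\]
which is precisely the frame operator viewed on the quotient. Writing $\mathcal{O}_\blam \subset \hermitian(d)$ for the coadjoint orbit of Hermitian matrices with spectrum $\blam$, we obtain $\coframes_{\blam}(\br) = q^{-1}(\mu^{-1}(\mathcal{O}_\blam))$. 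Since $q$ is a principal $G$-bundle with $G$ connected, $\coframes_{\blam}(\br)$ is (path-)connected if and only if $\mu^{-1}(\mathcal{O}_\blam) \subset M$ is.

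The required connectedness comes from Kirwan's theorem: for a Hamiltonian action of a compact connected Lie group on a compact connected symplectic manifold, every moment-map fiber $\mu^{-1}(\xi)$ is connected, and therefore so is its saturation $\mu^{-1}(\mathcal{O}_\blam) = \unitary(d) \cdot \mu^{-1}(\blam)$. Since all spaces in question are smooth manifolds, connected implies path-connected, finishing the argument. The main place care is required is the verification that the frame operator genuinely arises as a symplectic moment map for the product coadjoint orbit structure on $M$; this is standard, but requires carefully matching symplectic forms and moment maps across the product and through the principal bundle quotient. Once this infrastructure is in place, the theorem is an essentially immediate consequence of Kirwan's connectedness theorem.
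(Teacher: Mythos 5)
Your proof is correct and follows essentially the same route as the paper: quotient by the $\unitary(k_1) \times \cdots \times \unitary(k_N)$ action to land on a product of coadjoint orbits of $\unitary(d)$, recognize the frame operator as the momentum map for the diagonal action, invoke Kirwan's connectedness theorem to conclude the relevant level set is connected, and lift the conclusion back up via connectedness of the fibers. The only differences are cosmetic: you phrase the quotient as a principal bundle where the paper uses a more elementary point-set lemma about quotients by connected groups, and you obtain connectedness of $\mu^{-1}(\mathcal{O}_\blam)$ by first citing connectedness of the single fiber $\mu^{-1}(\mathrm{diag}(\blam))$ and then saturating, whereas the paper uses the shifting trick to apply Kirwan's theorem for $\Phi^{-1}(0)$ directly to the orbit preimage; both are standard and equivalent.
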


Since the empty set is trivially path-connected, the substantive content of this theorem is that $\coframes_{\blam}(\br)$ is path-connected whenever it is non-empty. For discussion of when it is empty, see \Cref{sec:admissibility}.

\subsection{Benedetto--Fickus Theorem for Fusion Frames}
\label{sub:optimization}

We now specialize back to fusion frames, but again work over $\K = \R$ or $\C$. Recall from \Cref{ex:frames} that, for $d$ and $N$ positive and $\bk = (k_1, \dots , k_N)$, $\fframes$ denotes the space of (square roots of) fusion frames with prescribed ranks $\bk$.

It is easy to generate random elements of $\fframes$: for each $i$, let $B_i$ be a $k_i \times d$ matrix with standard Gaussian entries, and let $A_i$ be the result of orthogonalizing the rows of $B_i$. Given their usefulness in applications, it is desirable to generate not just fusion frames, but \emph{tight} fusion frames. Following the lead of Benedetto and Fickus~\cite{benedetto2003finite}, a plausible strategy for doing so is to define a potential function on $\fframes$ whose global minima are exactly the set of TFFs, and then flow along the negative gradient direction of this potential. A natural candidate for such a potential is the fusion frame potential defined by Casazza and Fickus~\cite{casazza_minimizing_2009}, generalizing Benedetto and Fickus' frame potential. In the definition below, and at times throughout the rest of the paper, we will abuse notation and also use $\langle \cdot, \cdot \rangle$ and $\|\cdot\|$ to denote, respectively, the Frobenius inner product and norm on a space of operators; the meaning should always be clear from context.

\begin{definition}\label{def:fusion frame potential}
	Let $d, N, k_1, \dots , k_N$ be positive integers. The \emph{fusion frame potential} $\FFP:\fframes \to \R$ is defined by
	\[
		\FFP(\mbf{A}) := \left\|S_{\mbf{A}}\right\|^2 .
	\]	
\end{definition}

Note that the fusion frame potential could be generalized to arbitrary spaces of operator-valued frames, though we will not do so here. As Casazza and Fickus showed (see also \Cref{prop:welch bound}), the fusion frame potential satisfies a Welch-type lower bound which is achieved exactly at the TFFs. Hence, when they exist, TFFs are exactly the global minima of $\FFP$, and it is natural to ask whether we can get to these global minima by negative gradient flow:

\begin{question}\label{q:local minima} 
	When they exist, can we construct TFFs by flowing along the negative gradient of $\FFP$? That is, are all local minima of $\FFP$ also global in this setting?
\end{question}

An affirmative answer to this question is essentially a conjecture of Massey, Ruiz, and Stojanoff~\cite[Conjecture~4.3.3]{Massey:2009iu}. For unit-norm tight frames, Benedetto and Fickus showed that there are no spurious local minima of the frame potential, completely resolving \Cref{q:local minima} in this case:

\begin{thm}[{Benedetto--Fickus theorem~\cite{benedetto2003finite}}]\label{thm:benedetto fickus}
	Let $d$ and $N$ be positive and $\bk=(1,\dots , 1)$. Then $\FFP: \fframes \to \R$ has no spurious local minimizers.
\end{thm}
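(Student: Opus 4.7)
Since $\bk = (1, \ldots, 1)$, each $A_i$ is a unit row vector $f_i^{\ast}$, so $\fframes$ is the product of unit spheres $(S^{d-1})^N$ in $\K^d$ and $\FFP(F) = \|S_F\|^2$ coincides with the classical Benedetto--Fickus frame potential $\sum_{i,j} |\langle f_i, f_j \rangle|^2$, where $S_F = \sum_i f_i f_i^{\ast}$. My plan is to follow the standard outline: derive the critical-point equations, classify critical points by their eigenstructure, and exhibit an explicit descent direction at every non-tight critical configuration.

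A constrained-gradient calculation on the product sphere shows that $F$ is a critical point if and only if each $f_i$ is an eigenvector of $S_F$. Decomposing $\K^d$ into the eigenspaces $E_1, \ldots, E_m$ of $S_F$ with distinct eigenvalues $\mu_1 > \cdots > \mu_m > 0$, the subcollection $\{f_i : f_i \in E_k\}$ is then automatically a unit-norm tight frame for $E_k$, and taking traces forces $\mu_k = n_k/d_k$ where $n_k := \#\{i : f_i \in E_k\}$ and $d_k := \dim E_k$. Since each subframe must span its own eigenspace, $n_k \geq d_k$ and so $\mu_k \geq 1$ for every $k$; moreover $\FFP(F) = \sum_k n_k^2/d_k$ at any critical point, which by Cauchy--Schwarz is minimized precisely when $m = 1$, i.e., when $F$ is tight.

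The heart of the argument is showing that every non-tight critical point has a descent direction. If $F$ is non-tight then some pair of eigenvalues satisfies $\mu_k > \mu_\ell$, and since $\mu_\ell \geq 1$ we get $\mu_k > 1$, hence $n_k > d_k$: the subframe on $E_k$ is strictly overcomplete and admits a nonzero null combination $\sum_{s \in I_k} c_s f_{i_s} = 0$. Here is where the main obstacle appears: the naive single-vector rotation $\dot f_i = v$ (for $f_i \in E_k$ and unit $v \in E_\ell$) only yields $\FFP''(0) = 4(1 + \mu_\ell - \mu_k)$, which fails to be negative when the eigenvalue gap is at most $1$, and such gaps really can occur among critical points. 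The fix is to use the null combination to build the variation: take $\dot f_{i_s} = c_s v$ for $s \in I_k$ and zero otherwise (automatically tangent to the product sphere since $v \perp E_k$). A second-order Taylor expansion of $\FFP$ along the corresponding sphere-geodesic produces a contribution $4\|\sum_s c_s f_{i_s}\|^2$ from the first-order change in $S_F$---which vanishes by our choice of $c$---together with a contribution $4(\mu_\ell - \mu_k)\sum_s |c_s|^2$ from the sphere-curvature correction, yielding $\FFP''(0) = 4(\mu_\ell - \mu_k)\sum_s |c_s|^2 < 0$, so $F$ is not a local minimum. Verifying this second-order expansion cleanly---and recognizing that overcompleteness of the $E_k$-subframe is exactly what kills the curvature term---is the main technical content of the argument.
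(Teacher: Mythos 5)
Your proposal is correct in substance, but it takes a genuinely different route from the paper. The paper never proves \Cref{thm:benedetto fickus} directly: it treats it as a known result and re-derives it as the $\bk=(1,\dots,1)$ specialization of \Cref{cor:local minima are global}, whose proof runs through geometric invariant theory --- the Segre--Pl\"ucker embedding identifies frames having property $\mathscr{S}$ with semi-stable vectors for the $\SL(d)$ action, the gradient flow is shown to live in an $\SL(d)\times\prod\GL(k_i)$ orbit (hence to preserve semi-stability), and the Hilbert--Mumford criterion exhibits a destabilizing one-parameter subgroup at every non-tight critical point. You instead give a self-contained second-variation argument: classify critical points by the eigenspace decomposition of $S_F$, observe that the subframe in the top eigenspace is strictly overcomplete, and use a null combination to kill the $\|\dot S_F\|^2$ term so that only the sphere-curvature term $4(\mu_\ell - \mu_k)\sum_s|c_s|^2 < 0$ survives. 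This is an elementary and direct proof (close in spirit to the original Benedetto--Fickus argument and to the ``homotopy'' style proof in \cite{mixon_three_2021}), and the observation that overcompleteness is exactly what lets you annihilate the first-order change in $S_F$ is the right key step. What your approach does not buy you is the paper's actual goal: the GIT framing is chosen precisely because the semi-stability condition, the orbit-tangency of the gradient, and the Hilbert--Mumford computation all extend verbatim to $k_i > 1$, giving \Cref{thm:descent}; the descent-direction construction in your proof uses the unit-vector structure (each $A_i$ a single row, null combination scalar-by-scalar) in a way that does not obviously generalize to higher-rank blocks.

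One small correction for the complex case: with $\dot f_{i_s} = c_s v$, the first-order change in the frame operator is
\[
\dot S_F = v\Bigl(\sum_s c_s f_{i_s}^\ast\Bigr) + \Bigl(\sum_s \bar c_s f_{i_s}\Bigr) v^\ast,
\]
which vanishes when $\sum_s \bar c_s f_{i_s} = 0$, not when $\sum_s c_s f_{i_s} = 0$. So you should either take $\dot f_{i_s} = \bar c_s v$ or conjugate the coefficients in your null combination; in the real case the two coincide and there is no issue. The $\|\dot S_F\|^2$ and curvature contributions are otherwise exactly as you computed, giving $\FFP''(0) = 4\|\sum_s \bar c_s f_{i_s}\|^2 + 4(\mu_\ell - \mu_k)\sum_s|c_s|^2$.
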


In~\cite{mixon_three_2021} we gave three new proofs of this result, one of which we now intend to generalize to fusion frames. The exposition here is self-contained, but parallels that in~\cite[\S4]{mixon_three_2021}.

To state our theorem, we need to define a suitable notion of genericity for fusion frames.

\begin{definition}\label{def:property S}
	Let $\mbf{A} \in \fframes$ and recall that, for each $i=1,\dots , N$, the image of the orthogonal projector $P_i = A_i^\ast A_i$ is a $k_i$-dimensional subspace $\mathcal{S}_i \subset \K^d$. We say that $\mbf{A}$ \emph{has property} $\mathscr{S}$ if, for all proper linear subspaces $\mathcal{Q} \subset \K^d$, 
	\[
		\frac{1}{\dim \mathcal{Q}} \sum_{i=1}^N \dim(\mathcal{S}_i \cap \mathcal{Q}) \leq \frac{1}{d} \sum_{i=1}^N k_i = \frac{n}{d}.
	\]
\end{definition}

Roughly speaking, this condition says that no subspace of $\K^d$ intersects too many of the $\mathcal{S}_i$. For example, in the classical frames case $\bk = (1, \dots , 1)$, property $\mathscr{S}$ says the fraction of frame vectors lying in any given $r$-dimensional subspace is no more than $\frac{r}{d}$. In particular, this is a much weaker condition than being full spark.

More generally, in the case of equal-rank fusion frames---i.e., each $P_i$ is rank $k$---property $\mathscr{S}$ is weaker than the condition $\sum_{i=1}^N \dim(\mathcal{S}_i \cap \mathcal{Q}) \leq \dim \mathcal{Q}$ for all proper subspaces $\mathcal{Q} \subset \K^d$. Fusion frames satisfying this latter condition are relevant to the problem of compressed sensing with block sparsity~\cite{Eldar:2010gf,Boufounos:2009kk} since they provide unique reconstructions for the largest possible class of block-sparse signals, much as classical full spark frames are optimal for traditional compressed sensing~\cite{Alexeev:2012jk,donoho_optimally_2003}.

\begin{thm}\label{thm:descent}
	Let $0< d,N,k_1,\dots,k_N$. Consider the negative gradient flow $\Gamma: \fframes \times [0,\infty) \to \fframes$ defined by the differential equation
	\[
		\Gamma(\mbf{A}_0,0) = \mbf{A}_0, \quad \frac{d}{dt}\Gamma(\mbf{A}_0,t) = - \grad \FFP(\Gamma(\mbf{A}_0,t))
	\]
	where $\grad$ is the Riemannian gradient and $\mbf{A}_0 = (A_1, \dots , A_n) \in \fframes$ is an initial frame.
	
	If $\mbf{A}_0$ has property $\mathscr{S}$, then $\mbf{A}_\infty := \lim_{t \to \infty} \Gamma(\mbf{A}_0,t) \in \fframes$ is a global minimum of $\FFP$.
\end{thm}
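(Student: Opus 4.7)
My plan is to adapt the GIT proof of the classical Benedetto--Fickus theorem in~\cite{mixon_three_2021}: realize $\FFP$ as the norm-squared of a moment map, invoke the Kirwan--Ness theorem so that the negative gradient flow from the semistable locus converges to the zero level set, and then use the Hilbert--Mumford numerical criterion to identify property $\mathscr{S}$ with semistability.

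First I would pass from $\fframes$ to a quotient. By \Cref{prop:indeterminacy}, $\FFP(\mbf{A})$ depends only on the projectors $P_i=A_i^\ast A_i$, so it descends to a smooth function on an open subset $\mathcal{M}\subseteq \prod_{i=1}^N \operatorname{Gr}_{k_i}(\K^d)$, and the quotient map $\fframes\to\mathcal{M}$ is a Riemannian submersion with respect to the natural Frobenius/K\"ahler metrics. Taking $\K=\C$ first, each Grassmannian $\operatorname{Gr}_{k_i}(\C^d)$ is a $U(d)$-coadjoint orbit whose moment map is $P\mapsto P$ under the trace-pairing identification $\mathfrak{u}(d)^\ast\cong\hermitian(d)$. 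Summing over $i$ and shifting by the central element $\tfrac{n}{d}\Id_d$, the diagonal $U(d)$-action on $\mathcal{M}$ has moment map
\[
    \mu(\mbf{A})=S_{\mbf{A}}-\tfrac{n}{d}\Id_d,
\]
whose zero locus is exactly the image of the TFFs. Since $\tr(S_{\mbf{A}})=n$, a short calculation gives $\FFP(\mbf{A})=\|\mu(\mbf{A})\|^2+\tfrac{n^2}{d}$, so the negative gradient flow of $\FFP$ on $\fframes$ projects to the negative gradient flow of $\|\mu\|^2$ on $\mathcal{M}$.

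Next I would invoke the Kirwan--Ness theorem for the complexified $G=GL(d,\C)$-action on $\prod_i \operatorname{Gr}_{k_i}(\C^d)$: this flow exists for all time, every trajectory converges, and the basin of attraction of $\mu^{-1}(0)$ is exactly the $G$-semistable locus $\mathcal{M}^{ss}$. It therefore suffices to show that property $\mathscr{S}$ implies $G$-semistability, which I would do via the Hilbert--Mumford numerical criterion: $\mbf{A}$ is semistable iff $\mu_\lambda(\mbf{A})\leq 0$ for every one-parameter subgroup $\lambda:\C^\ast\to G$. Such a $\lambda$ determines a weight decomposition $\C^d=\bigoplus_\alpha V_\alpha$ and an ascending flag $\mathcal{Q}_\alpha=\bigoplus_{\beta\leq\alpha}V_\beta$. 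Combining the Pl\"ucker linearization on each $\operatorname{Gr}_{k_i}(\C^d)$ with the $\tfrac{n}{d}$-twist induced by the central shift of $\mu$, a standard calculation (essentially Abel summation over weight levels) shows that $\mu_\lambda(\mbf{A})\leq 0$ for all $\lambda$ is equivalent to
\[
    \sum_{i=1}^N\dim(\mathcal{S}_i\cap\mathcal{Q})\leq \tfrac{n}{d}\dim\mathcal{Q}
\]
for every member $\mathcal{Q}$ of every flag, which since every proper subspace of $\C^d$ occurs as a flag member is precisely property $\mathscr{S}$. Hence $\mbf{A}_0$ having property $\mathscr{S}$ lies in $\mathcal{M}^{ss}$, so its downward flow converges to a TFF, i.e., to a global minimum of $\FFP$.

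Convergence of $\Gamma(\mbf{A}_0,t)$ on $\fframes$ itself, rather than only of its image in $\mathcal{M}$, follows from real-analyticity of $\FFP$ together with the \L ojasiewicz gradient inequality on the compact sublevel set $\{\FFP\leq \FFP(\mbf{A}_0)\}$. The real case $\K=\R$ can be handled either via the real form of Kempf--Ness/Kirwan--Ness for $O(d)\subset GL(d,\R)$ acting on $\prod_i \operatorname{Gr}_{k_i}(\R^d)$, or by complexifying: a real fusion frame is a complex fusion frame with the same $\FFP$ and the same property $\mathscr{S}$, and the real-operator gradient is the real part of the complex gradient, so the real locus is invariant under the flow. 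The main obstacle is the Hilbert--Mumford weight computation: one must carefully identify the shifted Pl\"ucker linearization on a product of Grassmannians so that the weight reduces exactly to the intersection-dimension expression in property $\mathscr{S}$, with the constant $\tfrac{n}{d}$ arising precisely from the central shift.
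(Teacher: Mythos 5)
Your proposal is essentially correct and rests on the same underlying GIT/symplectic ideas as the paper (semistability, moment maps, the Hilbert--Mumford criterion), but it packages them differently. The paper works directly on $\fframes$ rather than on the quotient $\prod_i \operatorname{Gr}_{k_i}(\C^d)$: it maps $\fframes$ into the vector space $V = \bigwedge^{k_1}\C^d \otimes \dots \otimes \bigwedge^{k_N}\C^d$ via the Segre--Pl\"ucker map $\tau$, shows by an explicit gradient computation (\Cref{lem:gradient}) that the flow stays inside a single $\left(\SL(d) \times \prod_i \GL(k_i)\right)$-orbit, and then concludes that the limit $\mbf{A}_\infty$ remains semistable because semistability is preserved under taking orbit closures (\Cref{prop:semistable orbits}) and $\tau(\fframes)$ lies on the unit sphere, so the orbit never approaches the origin. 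It separately shows via an explicit one-parameter subgroup that non-tight critical points are unstable (\Cref{prop:property iii}), and it cites Mumford's theorem (\Cref{thm:semi-stability equals S}) for the equivalence of property $\mathscr{S}$ with semistability rather than re-deriving it. This avoids invoking the full Kirwan--Ness machinery and sidesteps the Riemannian-submersion compatibility you would need to verify---namely that the Frobenius metric on $\fframes$ descends to the $U(d)$-invariant K\"ahler metric on $\prod_i\operatorname{Gr}_{k_i}(\C^d)$ for which the $\|\mu\|^2$ gradient-flow/semistability dictionary of Kirwan--Ness holds. Your route is heavier but perfectly valid; it would give a slightly stronger result (a characterization of the basin of attraction, not just sufficiency) if the compatibility claims are pinned down.

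Two details to tighten. First, your group bookkeeping wavers between $\GL(d,\C)$ (whose center acts trivially on the Grassmannians but nontrivially on Pl\"ucker lines) and the $\SL(d,\C)$ used in the Hilbert--Mumford step; the paper works with $\SL(d,\C)$ throughout, in which case no fractional-determinant twist is needed---the ``central shift'' $\frac{n}{d}\Id_d$ is just projection of $\mu$ onto $\mathfrak{su}(d)^\ast$, automatic once the group is $\SU(d)$. Second, it is worth checking that your Hilbert--Mumford Abel-summation really reproduces the inequality of \Cref{def:property S} with the normalization $\frac{1}{\dim\mathcal{Q}}$ on the left; the paper simply invokes Mumford's classical computation for flags of subspaces, which is exactly that calculation. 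The treatment of the real case by flow-invariance of $\rfframes \subset \cfframes$ matches \Cref{prop:complex implies real}.
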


As mentioned above, for parameters $d$ and $\bk$ which admit TFFs, the TFFs are exactly the global minima of $\FFP$, so for those parameters this theorem implies that fusion frames with property $\mathscr{S}$ always flow to tight fusion frames. Moreover, these are exactly the parameters for which $\fframes$ contains fusion frames with property $\mathscr{S}$ (\Cref{cor:tight iff S}), which turn out to be dense in $\fframes$ (\Cref{cor:dense}). This implies that there are fusion frames arbitrarily close to any non-minimal critical point of $\FFP$ which flow to a global minimum, so $\FFP$ cannot have any local minima that are not global minima.

\begin{cor}\label{cor:local minima are global}
	When $\fframes$ contains TFFs, all local minima of $\FFP$ are global minima.
\end{cor}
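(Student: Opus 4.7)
The plan is to derive a contradiction by trapping a gradient trajectory near any putative non-global local minimum. Two inputs are essential. First, when $\fframes$ contains TFFs, the global minimizers of $\FFP$ coincide with the TFFs---this is the Welch-type bound of Casazza and Fickus already in play above. Second, by \Cref{cor:tight iff S} together with \Cref{cor:dense}, the set of fusion frames with property $\mathscr{S}$ is a dense subset of $\fframes$ whenever TFFs exist.

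Suppose, for contradiction, that $\mbf{A}^* \in \fframes$ is a local minimum of $\FFP$ with $\FFP(\mbf{A}^*) > \min \FFP$. Choose a compact ball $\overline{B}$ around $\mbf{A}^*$ on which $\FFP \geq \FFP(\mbf{A}^*)$, shrunk small enough that $c := \min_{\partial B} \FFP > \FFP(\mbf{A}^*)$. The open set $W := \{\mbf{A} \in B : \FFP(\mbf{A}) < c\}$ is a neighborhood of $\mbf{A}^*$ that is forward-invariant under the negative gradient flow $\Gamma$, since $\FFP$ is non-increasing along trajectories and takes values at least $c$ on $\partial B$. By density there is $\mbf{A}_0 \in W$ satisfying property $\mathscr{S}$, and \Cref{thm:descent} then produces a global minimum $\mbf{A}_\infty = \lim_{t \to \infty} \Gamma(\mbf{A}_0, t) \in \overline{W} \subset \overline{B}$. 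This forces $\min \FFP = \FFP(\mbf{A}_\infty) \geq \FFP(\mbf{A}^*) > \min \FFP$, the desired contradiction.

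The step requiring care is the choice of $\overline{B}$ realizing the strict separation $\min_{\partial B} \FFP > \FFP(\mbf{A}^*)$. Because $\FFP$ depends only on the projections $P_i = A_i^\ast A_i$, it is invariant under the $\gtorus$-action $(U_1, \ldots, U_N) \cdot (A_1, \ldots, A_N) = (U_1 A_1, \ldots, U_N A_N)$, so every critical point of $\FFP$ lies in a positive-dimensional orbit and no local minimum of $\FFP$ on $\fframes$ is strict in the ambient sense. I would resolve this by running the entire argument on the quotient $\fframes / \gtorus$, which is naturally an open subset of $\Gr{k_1}{d} \times \cdots \times \Gr{k_N}{d}$: the potential $\FFP$ and property $\mathscr{S}$ both descend because they depend only on the subspaces $\mathcal{S}_i = \range(A_i^\ast)$, the negative gradient flow descends using a $\gtorus$-invariant metric, and on the quotient a non-global local minimum does yield the strict separation needed for the trapping argument to go through as stated.
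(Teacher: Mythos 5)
Your proposal uses exactly the same ingredients as the paper---density of property~$\mathscr{S}$ fusion frames (\Cref{cor:dense}, \Cref{cor:tight iff S}) together with \Cref{thm:descent}---and you are right to flag that turning ``nearby points flow to global minima'' into ``no spurious local minima'' requires a genuine argument, which the paper states only informally. Your trapping scheme (find $\overline{B}$ with $\FFP \ge \FFP(\mbf{A}^\ast)$ on $\overline{B}$ and $\min_{\partial B}\FFP > \FFP(\mbf{A}^\ast)$, observe that the sublevel neighborhood $W$ is forward-invariant, and catch the limit $\mbf{A}_\infty$ inside $\overline{B}$) is the right shape of argument, and the forward-invariance of $W$ is correct.

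The gap is in the last step. You correctly observe that strict separation on $\partial B$ fails in $\fframes$ because of the $\gtorus$-action, but passing to $\fframes/\gtorus$ does not fix this: $\FFP$ is also invariant under the diagonal conjugation action of $\unitary(d)$ on subspaces (this is exactly the symmetry used in \Cref{sec:sg frames}, where the frame operator is a momentum map for it), and this action descends to the Grassmannian quotient. So unless $\mbf{A}^\ast$ has a very degenerate stabilizer, the orbit $\unitary(d)\cdot[\mbf{A}^\ast]$ is a positive-dimensional set of critical points with $\FFP = \FFP(\mbf{A}^\ast)$, and any small sphere $\partial B$ around $[\mbf{A}^\ast]$ in the quotient still meets it, so $\min_{\partial B}\FFP = \FFP(\mbf{A}^\ast)$. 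The assertion that ``on the quotient a non-global local minimum does yield the strict separation needed'' therefore needs justification it doesn't have. A more robust fix replaces the ball around a single point with a neighborhood of the whole connected component $K$ of the critical set at level $m^\ast = \FFP(\mbf{A}^\ast)$: $K$ is compact (the quotient is an open subset of the compact $\prod_i \Gr{k_i}{d}$, and $K$ is closed and disjoint from the set where the frame operator degenerates when $m^\ast$ is a critical value attained in $\fframes$), and a sufficiently small tubular neighborhood $U$ of $K$ has $\FFP > m^\ast$ on $\partial U$; one still needs to know that $\FFP > m^\ast$ on a punctured neighborhood of $K$, which requires either a Łojasiewicz-type inequality for the real-analytic function $\FFP$ (already implicitly needed to make sense of the limit $\lim_{t\to\infty}\Gamma(\mbf{A}_0,t)$ in \Cref{thm:descent}) or an appeal to Kirwan's stratification from \Cref{sec:symplectic_machinery}, noting that up to an additive constant $\FFP = \|\Phi\|^2$ for the momentum map $\Phi$ of \Cref{cor:frame operator coadjoint orbits}. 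In short: your outline is on target and more explicit than the paper's, but the final step of the fix is not yet proved.
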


This generalizes Benedetto and Fickus' result to fusion frames and completely answers \Cref{q:local minima}. See also work of Heineken, Llarena, and Morillas~\cite{heineken_minimizers_2018}, which gives a similar answer for a restriction of $\FFP$ to a subset of $\fframes$, and of Massey, Ruiz, and Stojanoff~\cite{Massey:2009iu}, who proved an analogous result with respect to a different notion of distance on fusion frames.

Even in some situations where there cannot be any TFFs---for example, when $d = N = 3$ and $\bk=(1,1,2)$---the negative gradient flow of $\FFP$ empirically seems to avoid spurious local minima, so there is hope that the conclusion of \Cref{cor:local minima are global} follows from weaker hypotheses.

\begin{remark}[Admissibility]
\label{sec:admissibility}

In light of \Cref{cor:local minima are global}, it would be useful to know when the space of TFFs is non-empty. More generally, we can ask whether $\oframes_{\blam}(\br)$ is non-empty, which is really prior to the question of whether it is path-connected. In the context of classical frames (i.e., $\bk = (1, \dots , 1)$), this is sometimes called the \emph{admissibility problem}, and its resolution follows easily from the Schur--Horn theorem~\cite{schur1923uber,horn_matrix_2013} (see also~\cite{antezana_schur--horn_2007,casazza2010} and Cahill, Fickus, Mixon, Poteet, and Strawn's contructive solution~\cite{Cahill:2013jv}): in this setting, $\br = (r_1, \dots , r_N)$ is a single list of positive numbers, and $\oframes_{\blam}(\br)$ is nonempty precisely when $\blam$ \emph{majorizes} $\br$~\cite{marshall1979inequalities}, meaning that 
\[
	\sum_{i=1}^d \lambda_i = \sum_{i=1}^N r_i \qquad \text{and} \qquad \sum_{i=1}^k \lambda_i \geq \sum_{i=1}^k r_i \quad \text{for all } k=1,\dots , d,
\]
where we assume $\blam$ and $\br$ are sorted in decreasing order. In particular, the space $\oframes_{\left(\frac{N}{d}, \dots , \frac{N}{d}\right)}(1, \dots , 1)$ of unit-norm tight frames is always nonempty when $N \geq d$. 

More generally, admissibility has been completely resolved for all spaces of tight fusion frames (therefore precisely determining the scope of \Cref{thm:descent} and \Cref{cor:local minima are global}): by Casazza, Fickus, Mixon, Wang, and Zhou~\cite{casazza_constructing_2011} when all $P_i$ have the same rank, and in general by Bownik, Luoto, and Richmond~\cite{bownik_combinatorial_2015}. Even for tight fusion frames, the conditions on $d$ and $\bk$ which ensure non-emptiness are quite complicated, boiling down to non-vanishing of certain Littlewood--Richardson coefficients. 

However, this is not really surprising, given that in general admissibility for operator-valued frames is equivalent to the following question: what $\blam, \br_1, \dots , \br_N$ can be the eigenvalues of $d \times d$ Hermitian matrices $M, P_1, \dots , P_N$ so that $M=P_1 + \dots + P_N$? In 1962, Horn conjectured necessary and sufficient conditions involving a complicated system of inequalities between $\blam$ and the $\br_i$~\cite{horn_eigenvalues_1962}. This became known as the Horn conjecture, which was eventually proved in the late 1990s by Klyachko~\cite{klyachko_stable_1998} and Knutson and Tao~\cite{knutson_honeycomb_1999,knutson_honeycomb_2004}; see Fulton's survey~\cite{fulton_eigenvalues_2000} for more, and Berenstein and Sjamaar's paper~\cite{berenstein_coadjoint_2000} for a generalization. The answer does not depend on the base field: if $\coframes_{\blam}(\br)$ is non-empty, then so is $\roframes_{\blam}(\br)$ (see, e.g.,~\cite[Theorem~20]{fulton_eigenvalues_2000}).

Thus, the admissibility problem for operator-valued frames is at least implicitly resolved by the proof of the Horn conjecture (see also~\cite{Massey:2009iu}), and more explicit solutions exist in the most common cases of interest, namely frames and tight fusion frames. 
\end{remark}

\section{Symplectic Machinery} 
\label{sec:symplectic_machinery}

As in our earlier work on frames~\cite{Needham:2021bi}, our strategy for proving \Cref{thm:main} involves symplectic geometry. First, we will review some standard concepts from symplectic geometry, which will help both to provide a quick reference and to establish our notation and sign conventions. Our main references for symplectic geometry are McDuff and Salamon~\cite{mcduff2017introduction} and Cannas da Silva~\cite{CannasdaSilva:2001cg}.

\subsection{Definitions} 
\label{sub:definitions}

A \emph{symplectic manifold} is a pair $(M,\omega)$, where $M$ is a (smooth, real) manifold and $\omega$ is a closed, nondegenerate 2-form on $M$. For each point $p \in M$ and for each pair of tangent vectors $X,Y \in T_pM$, we write $\omega_p(X,Y) \in \R$ for the evaluation of $\omega$ at the point on the pair. Being closed means that $d\omega$ is identically zero, where $d$ is the exterior derivative on $M$, and being nondegenerate means that, for each $p \in M$ and each $X \in T_pM$, there exists $Y \in T_pM$ so that $\omega_p(X,Y) \neq 0$. Nondegeneracy implies that a symplectic manifold must be even-dimensional over $\R$.

\begin{example}
	The simplest example of a symplectic manifold is $\C^n \approx \R^{2n}$. For $p \in \C^n$ there is a natural isomorphism $T_p \C^n \approx \C^n$, and complex coordinates $(x_1 + \sqrt{-1} y_1, \dots , x_n + \sqrt{-1}y_n)$ for $\C^n$ correspond to real coordinates $(x_1, \dots , x_n, y_1, \dots , y_n)$, with respect to which the \emph{standard symplectic form} on $\C^n$ is given by
	\[
		\omega = dx_1 \wedge dy_1 + \dots + dx_n \wedge dy_n.
	\]
	We can rewrite this in complex coordinates as follows: given $p \in \C^n$ and $Z=(z_1, \dots , z_n), W = (w_1, \dots , w_n) \in T_p\C^n \approx \C^n$, 
	\[
		\omega_p(Z,W) = -\Im(\overline{w}_1 z_1 + \dots + \overline{w}_n z_n) = -\Im(W^\ast Z) = -\Im\langle Z, W \rangle.
	\]
\end{example}
In fact, Darboux's theorem~\cite[Theorem~3.2.2]{mcduff2017introduction} implies that every point in a symplectic manifold has a neighborhood on which the symplectic structure looks like the standard one on $\C^n$.

Symplectic geometry grew out of Hamiltonian mechanics, and a big emphasis both historically and currently is on the interactions between symplectic structures and symmetries; that is, group actions on symplectic manifolds. In general, if $G$ is a Lie group with Lie algebra $\mathfrak{g}$ and $G$ acts on a manifold $M$, then each $\xi \in \mathfrak{g}$ determines a vector field $Y_\xi$ on $M$ as follows. For $p \in M$ and $g \in G$, let $g \cdot p$ denote the action of $g$ on $p$. Then we define
\[
	\left. Y_\xi \right|_p := \left.\frac{d}{d\varepsilon} \right|_{\varepsilon=0} \exp(\varepsilon \xi) \cdot p,
\]
where $\exp: \mathfrak{g} \to F$ is the exponential map of $G$. 

Now, suppose $G$ acts on a symplectic manifold $(M,\omega)$. If $\mathfrak{g}^\ast$ is the dual of $\mathfrak{g}$, a map $\Phi: M \to \mathfrak{g}^\ast$ is called a \emph{momentum map} for the $G$-action if it satisfies the following conditions.

First, let $D\Phi(p): T_pM \to T_{\Phi(p)}\mathfrak{g}^\ast$ be the derivative of $\Phi$ at $p \in M$. Since $\mathfrak{g}^\ast$ is a vector space, there is a natural isomorphism $T_{\Phi(p)}\mathfrak{g}^\ast \approx \mathfrak{g}^\ast$, so we can interpret $D\Phi(p)$ as a map to $\mathfrak{g}^\ast$. Hence, for each $X \in T_p M$, $D\Phi(p)(X)$ is an element of the dual space of $\mathfrak{g}$; that is, $D\Phi(p)(X): \mathfrak{g} \to \R$. For $\Phi$ to be a momentum map we require this map to satisfy the compatibility condition
\[
	D\Phi(p)(X)(\xi) = \omega_p(\left.Y_\xi\right|_p,X)
\]
for all $\xi \in \mathfrak{g}$.

Also, we require a momentum map $\Phi$ to be equivariant with respect to the given action of $G$ on $M$ and the natural coadjoint action of $G$ on $\mathfrak{g}^\ast$. More explicitly, the \emph{adjoint action} of $G$ on $\mathfrak{g}$ is the linearization at the identity of the conjugation action of $G$ on itself; that is, for each $g \in G$ the map $\Ad_g: \mathfrak{g} \to \mathfrak{g}$ is the derivative at the identity of the map $h \mapsto g h g^{-1}$. In turn, the \emph{coadjoint action} of $G$ on $\mathfrak{g}^\ast$ gives a map $\Ad_g^\ast: \mathfrak{g}^\ast \to \mathfrak{g}^\ast$ for each $g \in G$ which is defined by $\Ad_g^\ast(\chi)(\xi) := \chi(\Ad_{g^{-1}}(\xi))$. When $G$ is a matrix group, both $\Ad_g$ and $\Ad_g^\ast$ can be interpreted as conjugation by $g$. Now, the equivariance condition on momentum maps is that, for each $g \in G$ and each $p \in M$,
\[
	\Ad_g^\ast(\Phi(p)) = \Phi(g \cdot p).
\]

When a $G$-action admits a momentum map, the action is called \emph{Hamiltonian} and the tuple $(M, \omega, G, \Phi)$ is a \emph{Hamiltonian $G$-space}.

\begin{prop}[{cf.~\cite[Exercise~5.3.16]{mcduff2017introduction}}]\label{prop:products}
	Suppose $G$ is a Lie group and $(M_i,\omega_i,G,\Phi_i)$ are Hamiltonian $G$-spaces for $i=1,\dots,n$. Then the diagonal action of $G$ on $M_1 \times \dots \times M_n$ is Hamiltonian with momentum map
	\[
		\Phi(p_1, \dots , p_n) = \Phi_1(p_1) + \dots + \Phi_n(p_n).
	\]
\end{prop}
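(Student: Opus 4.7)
The plan is to verify the three conditions defining a Hamiltonian $G$-space for the product: an appropriate symplectic form, the momentum map compatibility condition, and coadjoint equivariance. First I would equip $M := M_1 \times \cdots \times M_n$ with the natural symplectic form $\omega = \pi_1^\ast \omega_1 + \cdots + \pi_n^\ast \omega_n$, where $\pi_i : M \to M_i$ is projection onto the $i$-th factor. Closedness is immediate from $d\omega = \sum_i \pi_i^\ast d\omega_i = 0$, and nondegeneracy is a factor-by-factor check: if $X = (X_1,\ldots,X_n)$ is a nonzero tangent vector at $p = (p_1,\ldots,p_n)$, then some $X_i \neq 0$, and by nondegeneracy of $\omega_i$ we can find $Y_i$ with $(\omega_i)_{p_i}(X_i, Y_i) \neq 0$, so pairing $X$ with $(0,\ldots,Y_i,\ldots,0)$ shows $\omega_p$ is nondegenerate.

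Next I would identify the fundamental vector field of the diagonal action. Since the diagonal action sends $(p_1,\ldots,p_n)$ to $(\exp(\varepsilon \xi) \cdot p_1, \ldots, \exp(\varepsilon \xi) \cdot p_n)$, differentiating at $\varepsilon = 0$ componentwise gives
\[
\left. Y_\xi \right|_{(p_1,\ldots,p_n)} = \bigl( \left. Y_\xi^{(1)} \right|_{p_1}, \ldots, \left. Y_\xi^{(n)} \right|_{p_n} \bigr),
\]
where $Y_\xi^{(i)}$ denotes the fundamental vector field for $\xi$ on $M_i$.

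With these ingredients in hand, verifying the momentum map condition is a direct calculation. For $\Phi(p_1,\ldots,p_n) := \Phi_1(p_1) + \cdots + \Phi_n(p_n)$, the derivative decomposes as $D\Phi(p)(X) = \sum_i D\Phi_i(p_i)(X_i)$ in $\mathfrak{g}^\ast$, and evaluating on $\xi \in \mathfrak{g}$ gives
\[
D\Phi(p)(X)(\xi) = \sum_{i=1}^n (\omega_i)_{p_i}\bigl( \left. Y_\xi^{(i)} \right|_{p_i}, X_i \bigr) = \omega_p\bigl( \left. Y_\xi \right|_p, X \bigr),
\]
using the momentum map condition for each factor and then the definition of $\omega$ on $M$. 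Equivariance follows from the linearity of the coadjoint action: $\Phi(g \cdot p) = \sum_i \Phi_i(g \cdot p_i) = \sum_i \Ad_g^\ast(\Phi_i(p_i)) = \Ad_g^\ast(\Phi(p))$.

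There is no real obstacle here; the argument is essentially a bookkeeping exercise once the symplectic form on the product is chosen correctly. The only point requiring care is the decomposition of the fundamental vector field for the diagonal action, since the rest of the proof rests on that identification together with the linearity of the data $\omega, \Phi, \Ad^\ast$ across the factors.
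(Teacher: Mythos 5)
Your proof is correct and follows essentially the same route as the paper's: equip the product with $\sum_i \pi_i^\ast \omega_i$, decompose the fundamental vector field of the diagonal action componentwise, then verify the momentum map identity and equivariance by linearity across factors. The only difference is that you explicitly check closedness and nondegeneracy of the product form, which the paper simply takes as the standard symplectic structure on a product.
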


\begin{proof}
	The standard symplectic form on a product $\prod_{i=1}^n M_i$ of symplectic manifolds is $\pi_1^\ast \omega_1 + \dots + \pi_n^\ast \omega_n$, where ${\pi_k:\prod_{i=1}^n M_i\to M_k}$ is projection onto the $k$th factor and $\pi_k^\ast$ is the induced pullback operator on forms.
	
	Any tangent vector $X \in T_{(p_1, \dots , p_n)} \prod_{i=1}^n M_i$ is of the form $X = (X_1, \dots , X_n)$ for $X_i \in T_{p_i}M_i$ for all $i=1, \dots , n$. In particular, if $\xi \in \mathfrak{g}$, then the associated vector field 
	\[
		\left.Y_\xi\right|_{(p_1, \dots , p_n)} = (\left. (Y_1)_\xi\right|_{p_1}, \dots , \left. (Y_n)_\xi\right|_{p_n}),
	\]
	where each $\left. (Y_i)_\xi\right|_{p_i} = (d\pi_i)_{(p_1, \dots , p_n)} \left.Y_\xi\right|_{(p_1, \dots , p_n)}$ is the vector field on $M_i$ associated to $\xi$. 
	
	Let $\Phi:\prod_{i=1}^n M_i \to \mathfrak{g}^\ast$ be defined as in the statement of the proposition and let $X = (X_1, \dots , X_n) \in T_{(p_1, \dots , p_n)} \prod_{i=1}^n M_i$. Then
	\begin{align*}
		D\Phi(p_1,\dots,p_n)(X_1,\dots, X_n)(\xi) = \sum_{i=1}^n D\Phi_i(p_i)(\xi) = \sum_{i=1}^n (\omega_i)_{p_i}(\left.Y_\xi\right|_{p_i},X_i) & = \sum_{i=1}^n(\omega_1)_{p_i}((d\pi_i)_{(p_i, \dots , p_i)} \left.Y_\xi\right|_{(p_1, \dots , p_n)},X_i)\\
		& = \sum_{i=1}^n\pi_i^\ast(\omega_i)_{p_i}(\left.Y_\xi\right|_{(p_1, \dots , p_n)},X_i)\\
		& = \omega_{(p_1,\dots,p_n)}(\left.Y_\xi\right|_{(p_1, \dots ,p_n)},(X_1,\dots, X_n)),
	\end{align*}
	where we've used linearity in various place. So $\Phi$ satisfies the appropriate compatibility condition with $\omega$.
	
	Moreover, if $g \in G$ and $(p_1, \dots , p_n) \in \prod_{i=1}^n M_i$, then
	\[
		\Ad_g^\ast(\Phi(p_1, \dots, p_n)) = \Ad_g^\ast\left(\sum_{i=1}^n \Phi_i(p_i)\right) = \sum_{i=1}^n \Ad_g^\ast(\Phi_i(p_i)) = \sum_{i=1}^n \Phi_i(g \cdot p_i) = \Phi(g \cdot p_1, \dots , g \cdot p_n) = \Phi(g \cdot (p_1, \dots , p_n))
	\]
	using linearity of $\Ad_g^\ast$ and $G$-equivariance of the $\Phi_i$, so $\Phi$ is $G$-equivariant, and hence is a momentum map for the $G$-action.
\end{proof}

\subsection{Coadjoint Orbits} 
\label{sub:coadjoint orbits}

An important class of Hamiltonian spaces are coadjoint orbits, which we now describe in some detail, loosely following~\cite[Example~5.3.11]{mcduff2017introduction}.
	
Let $G$ be a Lie group with Lie algebra $\mathfrak{g}$ and dual Lie algebra $\mathfrak{g}^\ast$. Let $\chi \in \mathfrak{g}^\ast$ and let $\mathcal{O}_\chi$ be the \emph{coadjoint orbit} through $\chi$; that is,
\[
	\mathcal{O}_\chi := \left\{\Ad_g^\ast(\chi) \,|\, g \in G \right\}.
\]
It is a standard fact that $\mathcal{O}_\chi$ has a natural symplectic form called the \emph{Kirillov--Kostant--Souriau (KKS) form}~\cite[II.3.c]{audin2012torus}, denoted $\omega^{\text{KKS}}$, defined as follows. The tangent space to $\mathcal{O}_\chi$ at $\chi$ consists of vectors of the form $\ad_\xi^\ast\chi$, where $\xi \in \mathfrak{g}$ and $\ad_\xi^\ast$ is the coadjoint representation of $\mathfrak{g}$ on $\mathfrak{g}^\ast$; that is, the derivative of the coadjoint representation $\Ad^\ast:G \to \operatorname{Aut}(\mathfrak{g}^\ast)$ at the identity. Then
\[
	\omega^{\text{KKS}}_\chi(\ad_\xi^\ast (\chi),\ad_{\zeta}^\ast (\chi)) := \chi([\xi,\zeta]),
\]
where $[\cdot,\cdot]$ is the Lie bracket on $\mathfrak{g}$. 

The action of $G$ on $\mathcal{O}_\chi$ is Hamiltonian, with momentum map $\Phi: \mathcal{O}_\chi \to \mathfrak{g}^\ast$ simply being the inclusion map. To see this, first notice that the vector field $Y_\xi$ on $\mathcal{O}_\chi$ induced by $\xi \in \mathfrak{g}$ is
\[
	\left. Y_\xi \right|_\chi = \left. \frac{d}{d\varepsilon}\right|_{\varepsilon=0} \Ad_{\exp \varepsilon \xi}^\ast(\chi) = \ad_\xi^\ast \chi.
\]
If $\Phi$ is the inclusion of $\mathcal{O}_\chi$ into $\mathfrak{g}^\ast$, then its derivative $D\Phi(\chi): T_\chi\mathcal{O}_\chi \to \mathfrak{g}^\ast$ is also an inclusion, so
\begin{multline*}
	D\Phi(\chi)(\ad_\zeta^\ast (\chi))(\xi) = \ad_\zeta^\ast (\chi)(\xi) = \chi(-\ad_\zeta(\xi)) = \chi(-[\zeta,\xi]) = \chi([\xi,\zeta])  \\
	= \omega^{\text{KKS}}(\ad_\xi^\ast (\chi),\ad_\zeta^\ast (\chi)) = \omega^{\text{KKS}}(\left. Y_\xi\right|_\chi,\ad_\zeta^\ast (\chi)).
\end{multline*}
Since $\Phi$ is obviously equivariant, this shows that it is a momentum map for the $G$-action on $\mathcal{O}_\chi$.

Combining the above discussion with \Cref{prop:products} yields the following corollary:

\begin{cor}\label{cor:products of coadjoint orbits}
	Let $G$ be a Lie group with dual Lie algebra $\mathfrak{g}^\ast$. Let $\mathcal{O}_1, \dots , \mathcal{O}_n \subset \mathfrak{g}^\ast$ be coadjoint orbits with their KKS forms $\omega_i^{\text{KKS}}$. Then $\left(\prod_{i=1}^n\mathcal{O}_i, \sum_{i=1}^n \pi_i^\ast \omega_i^{\text{KKS}}\right)$ is symplectic and the diagonal coadjoint action of $G$ is Hamiltonian with momentum map
	\[
		\Phi(\chi_1, \dots, \chi_n) = \chi_1 + \dots + \chi_n,
	\]
	where on the right hand side we have identified each $\chi_i \in \mathcal{O}_i$ with its inclusion into $\mathfrak{g}^\ast$.
\end{cor}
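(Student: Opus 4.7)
The plan is to observe that this corollary is almost immediate from the two results established immediately above, so the proof is essentially an assembly of pieces rather than a new calculation.

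First, I would unpack the hypotheses in the language of \Cref{prop:products}. From the discussion preceding the corollary, each coadjoint orbit $(\mathcal{O}_i, \omega_i^{\text{KKS}})$ is a symplectic manifold, the coadjoint $G$-action on it is Hamiltonian, and the momentum map $\Phi_i: \mathcal{O}_i \to \mathfrak{g}^\ast$ is the inclusion map. Thus each tuple $(\mathcal{O}_i, \omega_i^{\text{KKS}}, G, \Phi_i)$ is a Hamiltonian $G$-space in the sense of the preceding subsection.

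Next, I would directly apply \Cref{prop:products} to the family $\{(\mathcal{O}_i, \omega_i^{\text{KKS}}, G, \Phi_i)\}_{i=1}^n$. The proposition asserts that the product $\prod_{i=1}^n \mathcal{O}_i$ equipped with the standard product symplectic form $\sum_{i=1}^n \pi_i^\ast \omega_i^{\text{KKS}}$ is symplectic, and that the diagonal $G$-action, which here is precisely the diagonal coadjoint action, is Hamiltonian with momentum map
\[
    \Phi(\chi_1, \dots, \chi_n) = \Phi_1(\chi_1) + \dots + \Phi_n(\chi_n).
\]
Since each $\Phi_i$ is the inclusion $\mathcal{O}_i \hookrightarrow \mathfrak{g}^\ast$, the right-hand side becomes $\chi_1 + \dots + \chi_n$, yielding exactly the formula claimed in the statement.

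There is no real obstacle here: the only thing to verify is that the two ingredients fit together cleanly, i.e., that the ``diagonal action'' in \Cref{prop:products} corresponds to the ``diagonal coadjoint action'' in the statement of the corollary, and that the sum-of-inclusions formula is interpreted correctly as vector addition inside $\mathfrak{g}^\ast$. Both are unambiguous from context, so the proof amounts to a single sentence invoking \Cref{prop:products} together with the identification $\Phi_i = \text{inclusion}$.
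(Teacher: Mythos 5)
Your proposal matches the paper's own argument exactly: the corollary is stated as a direct consequence of \Cref{prop:products} combined with the preceding discussion showing each $(\mathcal{O}_i,\omega_i^{\text{KKS}},G,\Phi_i)$ is a Hamiltonian $G$-space with $\Phi_i$ the inclusion. Nothing is missing and no further checking is needed.
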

	
\subsection{Level Sets of Momentum Maps}
\label{sub:level sets}

We are shortly going to associate $P_i$ with fixed spectra with points on a coadjoint orbit of $\unitary(d)$, so fixing the spectra of $P_1, \dots , P_N$ corresponds to choosing coadjoint orbits of $\unitary(d)$ and taking their Cartesian product, and the associated momentum map of the diagonal $\unitary(d)$ action will be the frame operator by \Cref{cor:products of coadjoint orbits}. This will all be explained in the next section, but the point is that the \emph{tight} operator-valued frames with fixed spectra of the $P_i$ will be precisely a level set of the momentum map, and the frame homotopy problem in this case boils down to showing connectedness of this level set. Fortunately for us, there are powerful results showing that level sets of momentum maps are usually connected. 

While one can prove somewhat more general results, we will only consider Hamiltonian $G$-spaces $(M,\omega,G,\Phi)$ where both $M$ and $G$ are compact. Since we intend to apply these results to the diagonal action of $\unitary(d)$ on a product of its coadjoint orbits, this will suffice for our purposes.

Fix a $G$-invariant inner product on the Lie algebra $\mathfrak{g}$. This induces a vector space isomorphism of $\mathfrak{g}^\ast$ with $\mathfrak{g}$, and hence determines an inner product and norm on the codomain $\mathfrak{g}^\ast$ of the momentum map $\Phi$. Kirwan~\cite{Kirwan:1984jt} showed that, while it is not quite a Morse--Bott function, the norm-squared map $\|\Phi\|^2$ has many of the desirable properties of such functions. 

\begin{thm}[{Kirwan~\cite[Theorem~4.16]{Kirwan:1984jt}}]\label{thm:kirwan stratification}
	The set of critical points for $\|\Phi\|^2$ is a finite disjoint union of closed subsets $\{C_\beta: \beta \in \mathcal{B}\}$ on each of which $\|\Phi\|^2$ takes a constant value (here the indexing set $\mathcal{B}$ is a finite subset of the positive Weyl chamber $\mathfrak{t}_+$ of $\mathfrak{g} \simeq \mathfrak{g}^\ast$).
	
	Moreover, there is a smooth stratification $\{S_\beta: \beta \in \mathcal{B}\}$ of $M$, where $p \in S_\beta$ if and only if the limit set of its image under the flow of $-\grad\|\Phi\|^2$ (for an appropriate choice of $G$-invariant Riemannian metric on $M$) is contained in $C_\beta$. 
	
	Finally, for each $\beta$ the inclusion $C_\beta \hookrightarrow S_\beta$ is an equivalence of Cech cohomology and also $G$-invariant cohomology.
\end{thm}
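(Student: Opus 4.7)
The plan is to treat $\|\Phi\|^2$ as a generalized Morse-Bott function whose negative gradient flow organizes $M$ into strata indexed by its critical values. The key ingredient is a formula expressing $\grad \|\Phi\|^2$ in terms of $\Phi$ itself. Fix a $G$-invariant Riemannian metric on $M$ compatible with $\omega$, giving a compatible almost-complex structure $J$. The momentum map identity $D\Phi(p)(X)(\xi) = \omega_p(Y_\xi|_p, X)$, combined with the isomorphism $\mathfrak{g}^\ast \simeq \mathfrak{g}$ induced by the invariant inner product, yields $\grad \|\Phi\|^2(p) = 2 J Y_{\Phi(p)}(p)$ (up to sign conventions). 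Consequently $p$ is critical if and only if $Y_{\Phi(p)}(p) = 0$, i.e., if and only if $p$ is fixed by the $1$-parameter subgroup $\{\exp(t\Phi(p))\}$.

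Next I would enumerate the critical components via $G$-equivariance. Since $\Phi(g\cdot p) = \Ad_g^\ast \Phi(p)$, every coadjoint orbit of a critical value meets the positive Weyl chamber $\mathfrak{t}_+$ in a unique point $\beta$, and on the associated component $C_\beta$ one has $\|\Phi\|^2 \equiv \|\beta\|^2$. Concretely, $C_\beta$ is a union of connected components of $\Phi^{-1}(\beta) \cap Z_\beta$, where $Z_\beta$ denotes the fixed-point set of $\exp(\R \beta)$. Finiteness of the index set $\mathcal{B}$ follows from compactness of $M$: the fixed-point set of the maximal torus has finitely many components, and by the Atiyah-Guillemin-Sternberg convexity theorem $\Phi$ sends each such component to a convex polytope, whose unique nearest-origin point supplies the candidate $\beta$.

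With the critical set understood, I would define $S_\beta$ as the set of points whose $-\grad \|\Phi\|^2$ trajectory has limit set contained in $C_\beta$. Convergence at every point is the crux: $\|\Phi\|^2$ is real-analytic on analytic charts, so a \L{}ojasiewicz gradient inequality forces each trajectory to have a unique limit, and compactness of $M$ forces that limit into some $C_\beta$. To upgrade $S_\beta$ to a smooth substratum I would invoke a $G$-equivariant Darboux-Weinstein normal form near each $C_\beta$, identify the negative-Hessian eigenbundle transverse to $C_\beta$, and match $S_\beta$ with its unstable manifold locally; the flow then exhibits $S_\beta$ as a $G$-equivariant deformation retract of $C_\beta$, yielding the claimed \v{C}ech and $G$-equivariant cohomology equivalence. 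The main obstacle is that $\|\Phi\|^2$ is generally \emph{not} Morse-Bott along $C_\beta$—the critical sets can be singular and the transverse Hessian degenerate in non-obvious directions—so classical Morse-Bott theory does not apply directly. This gap is exactly what Kirwan's \emph{minimal degeneracy} condition is designed to fill, and verifying it near each $C_\beta$ is what legitimizes the stratification and the retraction arguments.
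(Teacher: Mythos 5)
The paper does not prove this theorem; it is quoted verbatim as a cited result of Kirwan (\cite[Theorem~4.16]{Kirwan:1984jt}), so there is no proof in the paper for your sketch to be compared against. That said, your blind reconstruction tracks Kirwan's actual argument closely and correctly: the gradient formula $\grad\|\Phi\|^2 = 2JY_{\Phi(\cdot)}$, the characterization of critical points as fixed points of $\exp(t\Phi(p))$, the reduction to $\beta \in \mathfrak{t}_+$ by coadjoint equivariance, the finiteness of $\mathcal{B}$ from compactness, and the role of ``minimal degeneracy'' as a substitute for Morse--Bott are all genuine components of her proof, and you are right that the last of these is the heart of the matter. Two notes. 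First, $C_\beta$ is the full $G$-sweep $G\cdot(\Phi^{-1}(\beta)\cap Z_\beta)$, not merely a union of components of $\Phi^{-1}(\beta)\cap Z_\beta$; since $\beta$ generically has nontrivial coadjoint orbit, the $G$-orbit is strictly larger. Second, your invocation of the \L{}ojasiewicz inequality to establish that each $-\grad\|\Phi\|^2$ trajectory has a unique limit is actually more careful than Kirwan's original 1984 argument: the convergence claim in her book has a known gap at exactly this point, later repaired by Lerman precisely via \L{}ojasiewicz (``Gradient flow of the norm squared of a moment map,'' 2005). So your sketch is not only a faithful outline of the cited proof but quietly uses the modern fix for its one historically shaky step.
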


Moreover, Kirwan showed~\cite[4.18]{Kirwan:1984jt} that the $S_\beta$ are all locally closed, even-dimensional submanifolds of $M$. Since it is impossible to disconnect a manifold by removing submanifolds of codimension $\geq 2$, each stratum must be connected. In particular, the stratum $S_0$ of points which flow to $\Phi^{-1}(0)$ is connected; by the equivalence of Cech cohomology, the level set $\Phi^{-1}(0) = C_0$ is also connected:

\begin{thm}[{Kirwan~\cite[(3.1)]{Kirwan:1984im}; see also~\cite[Remark~5.8]{Sjamaar:1991fe}}]\label{thm:kirwan connected}
	Let $(M,\omega,G,\Phi)$ be a Hamiltonian $G$-space with $M$ and $G$ compact. Then $\Phi^{-1}(0)$ is connected.
\end{thm}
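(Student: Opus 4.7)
The plan is to deduce connectedness of $\Phi^{-1}(0)$ from \Cref{thm:kirwan stratification} through the three-step chain $\Phi^{-1}(0) = C_0 \simeq S_0 \subset M$. Without loss of generality I would assume $M$ is connected; otherwise one applies the argument to each connected component of $M$ separately, as the momentum map restricts to a momentum map on each component.

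First I would identify $C_0$ with $\Phi^{-1}(0)$. Since $\|\Phi\|^2$ is a nonnegative smooth function, its zero set coincides with $\Phi^{-1}(0)$, and every zero of a nonnegative smooth function is automatically a critical point; so the critical component at value $0$ is exactly $\Phi^{-1}(0)$.

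Next I would show the stratum $S_0$ is connected. By the remark following \Cref{thm:kirwan stratification}, each stratum $S_\beta$ is a locally closed, even-dimensional submanifold of $M$. Since $\dim_\R M$ is also even and the finitely many $S_\beta$ partition $M$, at most one stratum can be open; that stratum must be $S_0$, because the negative gradient flow of $\|\Phi\|^2$ preserves some open neighborhood of $\Phi^{-1}(0)$ (its critical minima) and carries that neighborhood into $S_0$. Every $S_\beta$ with $\beta \neq 0$ therefore has real codimension at least $2$ in $M$. A finite union of submanifolds of codimension $\geq 2$ cannot disconnect a connected manifold — any two points of the complement can be joined by a path kept transverse to the lower-dimensional strata — so $S_0 = M \setminus \bigsqcup_{\beta \neq 0} S_\beta$ is connected.

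Finally I would invoke the last clause of \Cref{thm:kirwan stratification}: the inclusion $C_0 \hookrightarrow S_0$ induces an isomorphism on Cech cohomology. Connectedness of $S_0$ gives $\check{H}^0(S_0;\R) \cong \R$, hence $\check{H}^0(C_0;\R) \cong \R$, which forces $\Phi^{-1}(0) = C_0$ to be connected. The principal obstacle I anticipate lies in Step two, namely verifying that $S_0$ is the unique top-dimensional stratum and that the other strata have codimension at least $2$. Everything turns on even-dimensionality of all strata, which is a genuinely symplectic input — it reflects the fact that each $C_\beta$ arises from a symplectic reduction in Kirwan's construction — and which we are taking as packaged inside \Cref{thm:kirwan stratification}. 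Once that geometric fact is invoked, the remainder of the argument is formal.
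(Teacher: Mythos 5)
Your proof is correct and follows essentially the same route as the paper: deduce connectedness of $C_0 = \Phi^{-1}(0)$ from connectedness of the stratum $S_0$ via the \v{C}ech cohomology equivalence in \Cref{thm:kirwan stratification}, and establish connectedness of $S_0$ from the fact that its complement in $M$ is a finite union of even-dimensional (hence real codimension $\geq 2$) locally closed submanifolds, which cannot disconnect a connected manifold. You spell out in more detail why $S_0$ is the unique open stratum (via forward-invariance of a small sublevel set $\{\|\Phi\|^2 < \epsilon\}$ under the negative gradient flow), a point the paper leaves implicit, but the overall argument is the same one the paper gives in the paragraph preceding the theorem.
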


In general, we will be interested not just in $\Phi^{-1}(0)$, but also in $\Phi^{-1}(\mathcal{O})$, where $\mathcal{O} \subset \mathfrak{g}^\ast$ is a coadjoint orbit. Fortunately, the ``shifting trick'' will allow us to easily translate \Cref{thm:kirwan connected} to this more general setting:

\begin{cor}\label{cor:general connected}
	Let $(M,\omega,G,\Phi)$ be a Hamiltonian $G$-space with $M$ and $G$ compact and let $\mathcal{O} \subset \mathfrak{g}^\ast$ be a coadjoint orbit. Then $\Phi^{-1}(\mathcal{O})$ is connected.
\end{cor}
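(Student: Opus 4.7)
The plan is to reduce the statement to the base case proved in \Cref{thm:kirwan connected} via the standard \emph{shifting trick}. First I would endow the orbit $\mathcal{O}$ with the negative of its KKS symplectic form, obtaining a symplectic manifold $\mathcal{O}^- := (\mathcal{O}, -\omega^{\text{KKS}})$. Since negating the symplectic form also negates the associated momentum map (so as to preserve the defining compatibility condition $D\Phi(p)(X)(\xi) = \omega_p(\left.Y_\xi\right|_p, X)$), $\mathcal{O}^-$ is a Hamiltonian $G$-space whose momentum map is the negated inclusion $\chi \mapsto -\chi$.

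Next I would form the product $M \times \mathcal{O}^-$ with its diagonal $G$-action. By \Cref{prop:products}, this is a Hamiltonian $G$-space with momentum map
\[
    \tilde{\Phi}(p,\chi) = \Phi(p) - \chi.
\]
Since $G$ is compact, the coadjoint orbit $\mathcal{O}$ is compact (as the continuous image of $G$ under the coadjoint action), so $M \times \mathcal{O}^-$ is a compact Hamiltonian $G$-space. Applying \Cref{thm:kirwan connected} then yields that $\tilde{\Phi}^{-1}(0) \subset M \times \mathcal{O}$ is connected.

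Finally I would observe that $\tilde{\Phi}^{-1}(0) = \{(p,\chi) \in M \times \mathcal{O} : \Phi(p) = \chi\}$ is precisely the graph of the restriction $\Phi|_{\Phi^{-1}(\mathcal{O})}$, so projection onto the first factor gives a continuous surjection from $\tilde{\Phi}^{-1}(0)$ onto $\Phi^{-1}(\mathcal{O})$. Since continuous images of connected spaces are connected, $\Phi^{-1}(\mathcal{O})$ is connected as well. The main obstacle here is really only bookkeeping: one must verify that the shifting construction produces a Hamiltonian $G$-space with the claimed momentum map, but this is immediate from unwinding the definitions once the sign conventions are fixed.
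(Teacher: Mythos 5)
Your proof is correct and takes essentially the same approach as the paper's: both invoke the shifting trick to reduce to \Cref{thm:kirwan connected} via the product $M \times \mathcal{O}$ with form $\pi_1^\ast\omega - \pi_2^\ast\omega^{\text{KKS}}$ and momentum map $(p,\chi)\mapsto\Phi(p)-\chi$. The only cosmetic difference is in the final step, where you conclude via continuous surjection onto $\Phi^{-1}(\mathcal{O})$ whereas the paper notes that the graph is homeomorphic to $\Phi^{-1}(\mathcal{O})$; you also make explicit the compactness of $\mathcal{O}$ needed to apply Kirwan's theorem, which the paper leaves implicit.
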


\begin{proof}
	The goal is to build a new Hamiltonian $G$-space $(\overline{M},\overline{\omega},G,\overline{\Phi})$ so that $\overline{\Phi}^{-1}(0) \approx \Phi^{-1}(\mathcal{O})$. This is exactly what the shifting trick (see~\cite[\S24.4]{CannasdaSilva:2001cg} or~\cite[Proof of Proposition~5.4.15]{mcduff2017introduction}) was designed to do. 
	
	Specifically, we know that $(\mathcal{O},\omega^{\text{KKS}})$ is symplectic, and hence so is $(\mathcal{O},-\omega^{\text{KKS}})$. Let $\overline{M} = M \times \mathcal{O}$ with symplectic form $\overline{\omega} = \pi_1^\ast \omega + \pi_2^\ast(-\omega^{\text{KKS}})$, where as usual $\pi_i$ is projection onto the $i$th factor. Let the $G$ action on $\overline{M}$ be the diagonal action. Then \Cref{prop:products} and the discussion in \Cref{sub:coadjoint orbits} imply that
	\[
		\overline{\Phi}(p,\chi) := \Phi(p) - \chi
	\]
	is a momentum map for this action.
	
	Now,
	\[
		\overline{\Phi}^{-1}(0) = \{(p,\Phi(p))\,|\,p \in M, \Phi(p) \in \mathcal{O}\}
	\]
	is connected by \Cref{thm:kirwan connected}. Since this space is certainly homeomorphic to $\Phi^{-1}(\mathcal{O})$, we conclude that $\Phi^{-1}(\mathcal{O})$ is also connected.
\end{proof}

\section{The Symplectic Geometry of Spaces of Operator-Valued Frames} 
\label{sec:sg frames}

We now relate the general machinery of the previous section to operator-valued frames. Throughout this section we will fix positive integers $d,N,k_1, \dots , k_N$. We will also fix $\br = (\br_1,\dots, \br_N)$, where $\br_i = (r_{i1},\dots , r_{ik_i})$ with $r_{i1} \geq \dots \geq r_{ik_i} > 0$. 

Given this data, consider the space $\coframes(\br)$ of all operator-valued frames $(A_1, \dots , A_n)$, where $A_i: \C^d \to \C^{k_i}$ is linear and $P_i := A_i^\ast A_i$ has spectrum $\br_i$. The space $\coframes(\br)$ is not obviously symplectic, but our first goal is to show that the quotient $\coframes(\br)/(\gtorus)$ is symplectic, and in fact is essentially a product of coadjoint orbits of $\unitary(d)$.

To start, we recall that the space $\hermitian(d)$ of $d \times d$ Hermitian matrices can be identified with the dual $\mathfrak{u}(d)^\ast$ of the Lie algebra of $\unitary(d)$ by the isomorphism
\begin{align*}
	\alpha: \hermitian(d) & \to \mathfrak{u}(d)^\ast \\
	\xi & \mapsto \left( \eta \mapsto \frac{\sqrt{-1}}{2} \tr(\eta^\ast \xi) = \left\langle\frac{\sqrt{-1}}{2}\xi, \eta \right\rangle =: \alpha_\xi(\eta)\right).
\end{align*}

We collect relevant lemmas from our previous paper~\cite[\S2.2.1 and \S2.2.2]{Needham:2021bi} in the following proposition:

\begin{prop}\label{prop:hermitian coadjoint}
	Under this identification, the coadjoint action of $\unitary(d)$ on $\mathfrak{u}(d)^\ast$ corresponds to the conjugation action of $\unitary(d)$ on $\hermitian(d)$, and hence coadjoint orbits of $\unitary(d)$ can be identified with collections of Hermitian matrices with fixed spectrum $\bmu$, which we will denote $\mathcal{O}_{\bmu} \subset \hermitian(d)$.
\end{prop}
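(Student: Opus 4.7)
The plan is a direct intertwining computation followed by an appeal to the spectral theorem. First I would quickly check that $\alpha$ is a genuine real vector space isomorphism: it is manifestly linear, the factor $\sqrt{-1}/2$ is precisely what makes $\alpha_\xi(\eta) = \tfrac{\sqrt{-1}}{2}\tr(\eta^\ast \xi)$ land in $\R$ when $\xi \in \hermitian(d)$ and $\eta \in \mathfrak{u}(d)$ (since $\eta^\ast\xi$ then has purely imaginary trace), injectivity follows from non-degeneracy of the trace pairing, and dimension counting (both sides are real $d^2$-dimensional) upgrades this to an isomorphism.

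Second, I would carry out the key compatibility computation. By definition the coadjoint action is $\Ad_U^\ast(\chi)(\eta) = \chi(\Ad_{U^{-1}}\eta) = \chi(U^\ast \eta U)$. Applied to $\chi = \alpha_\xi$, and using that $(U^\ast \eta U)^\ast = U^\ast \eta^\ast U$ together with cyclicity of trace,
\[
\Ad_U^\ast(\alpha_\xi)(\eta) \;=\; \tfrac{\sqrt{-1}}{2}\tr\bigl((U^\ast \eta U)^\ast \xi\bigr) \;=\; \tfrac{\sqrt{-1}}{2}\tr(U^\ast \eta^\ast U \xi) \;=\; \tfrac{\sqrt{-1}}{2}\tr(\eta^\ast U \xi U^\ast) \;=\; \alpha_{U\xi U^\ast}(\eta).
\]
Hence $\alpha^{-1}\circ \Ad_U^\ast \circ \alpha$ is the conjugation action $\xi \mapsto U\xi U^\ast$ on $\hermitian(d)$, which is the desired intertwining.

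Third, the identification of coadjoint orbits with isospectral sets is then immediate. The coadjoint orbit of $\alpha_\xi$ is the image under $\alpha$ of the unitary conjugation orbit of $\xi$ in $\hermitian(d)$, and by the spectral theorem two Hermitian matrices are unitarily conjugate iff they share the same spectrum (as a multiset). So each coadjoint orbit corresponds to exactly one isospectral set $\mathcal{O}_{\bmu} \subset \hermitian(d)$.

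I do not anticipate a real obstacle: this is essentially a normalization bookkeeping exercise. The only things to watch are (a) the $\sqrt{-1}/2$ and the conjugate-transpose in the definition of $\alpha$, and (b) making sure the computation produces conjugation by $U$ rather than its inverse, so that equivariance is oriented correctly. Both are resolved by the cyclicity-of-trace manipulation above.
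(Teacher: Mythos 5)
The paper does not prove this proposition here; it cites \S2.2.1--\S2.2.2 of the authors' earlier work~\cite{Needham:2021bi} and simply records the statement for later use. Your direct verification is correct and is the natural argument one would give: the reality check on $\alpha_\xi(\eta)$, the cyclicity-of-trace intertwining computation showing $\Ad_U^\ast(\alpha_\xi) = \alpha_{U\xi U^\ast}$, and the appeal to the spectral theorem to identify unitary conjugacy classes with isospectral sets are all accurate and in the right order, and the normalization/orientation concerns you flag are resolved exactly as you say.
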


For each $i=1, \dots , N$, the collection of $P_i \in \hermitian(d)$ with spectrum $\br_i$ is exactly $\mathcal{O}_{\br_i}$, and so 
\begin{equation}\label{eq:pframes1}
	\pframes(\br) := \{(P_1, \dots , P_N) \in \hermitian(d)^N \,|\, \spec(P_1) = \br_1,\dots , \spec(P_N) = \br_N\} = \mathcal{O}_{\br_1} \times \dots \times \mathcal{O}_{\br_N}
\end{equation}
is a product of coadjoint orbits. \Cref{cor:products of coadjoint orbits} now has the following immediate consequence:
\begin{cor}\label{cor:frame operator coadjoint orbits}
	The momentum map $\Phi$ for the diagonal conjugation action of $\unitary(d)$ on $\pframes(\br)$ is precisely the frame operator:
	\[
		\Phi(P_1, \dots , P_N) = P_1 + \dots + P_N.
	\]
\end{cor}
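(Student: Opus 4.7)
The plan is to deduce this directly from the general result on momentum maps for products of coadjoint orbits (\Cref{cor:products of coadjoint orbits}), by simply transferring it across the identification $\alpha: \hermitian(d) \to \mathfrak{u}(d)^\ast$ set up in \Cref{prop:hermitian coadjoint}.

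First I would unpack what equation~(\ref{eq:pframes1}) and \Cref{prop:hermitian coadjoint} already give us. Under $\alpha$, each $\mathcal{O}_{\br_i} \subset \hermitian(d)$ is identified with the coadjoint orbit of $\unitary(d)$ through (any element of) $\alpha(\mathcal{O}_{\br_i}) \subset \mathfrak{u}(d)^\ast$, and the diagonal conjugation action of $\unitary(d)$ on $\hermitian(d)^N$ is intertwined by $\alpha^{\times N}$ with the diagonal coadjoint action on $(\mathfrak{u}(d)^\ast)^N$. Thus $\pframes(\br)$, equipped with the diagonal conjugation action, is equivariantly identified with the product of coadjoint orbits $\alpha(\mathcal{O}_{\br_1}) \times \dots \times \alpha(\mathcal{O}_{\br_N})$ equipped with the diagonal coadjoint action.

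Next I would apply \Cref{cor:products of coadjoint orbits} to this product. It tells us that a momentum map for the diagonal coadjoint action is given by the sum of inclusions, i.e.
\[
    \widetilde{\Phi}(\chi_1, \dots, \chi_N) = \chi_1 + \dots + \chi_N \in \mathfrak{u}(d)^\ast.
\]
Pulling this back through $\alpha$, which is a linear isomorphism and therefore commutes with addition, yields a momentum map on $\pframes(\br)$ of the form $\Phi(P_1,\dots,P_N) = P_1 + \dots + P_N$, once we identify $\mathfrak{u}(d)^\ast$ with $\hermitian(d)$ as in \Cref{prop:hermitian coadjoint}.

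There is really no hard step here: the proof is a bookkeeping exercise in chasing the linear identification $\alpha$, and the only thing to be careful about is noting that because $\alpha$ is linear, both the symplectic/KKS data and the sum formula transport cleanly from $\mathfrak{u}(d)^\ast$ to $\hermitian(d)$. The substantive content has already been done in \Cref{prop:products}, \Cref{cor:products of coadjoint orbits}, and \Cref{prop:hermitian coadjoint}; this corollary just records the concrete form the momentum map takes on spaces of tuples of Hermitian matrices with prescribed spectra.
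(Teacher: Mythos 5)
Your proposal is correct and follows exactly the route the paper takes: identify $\pframes(\br)$ with the product of coadjoint orbits $\mathcal{O}_{\br_1} \times \dots \times \mathcal{O}_{\br_N}$ via the equivariant isomorphism $\alpha$ from \Cref{prop:hermitian coadjoint} (this is equation~\eqref{eq:pframes1}), apply \Cref{cor:products of coadjoint orbits}, and transport the sum formula back through the linear map $\alpha$. The paper simply states the corollary as an ``immediate consequence'' without spelling out the bookkeeping, which you have made explicit correctly.
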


Fix $\blam = (\lambda_1, \dots , \lambda_d)$ with $\lambda_1 \geq \dots \geq \lambda_d > 0$. Define
\begin{equation}\label{eq:pframes2}
	\pframes_{\blam}(\br) := \{(P_1, \dots , P_N) \in \hermitian(d)^N \,|\, \spec(P_1) = \br_1,\dots , \spec(P_N) = \br_N, \spec(P_1 + \dots + P_N) = \blam\}.
\end{equation}
In other words, this is the collection of $P_i$ with both the right individual spectra and the right spectrum of the frame operator. Then \Cref{cor:frame operator coadjoint orbits,cor:general connected} imply:

\begin{prop}\label{prop:connected operators}
	$\pframes_{\blam}(\br) = \Phi^{-1}(\mathcal{O}_{\blam}) \subset\pframes(\br)$ is connected.
\end{prop}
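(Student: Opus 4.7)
The plan is to first identify the set $\pframes_{\blam}(\br)$ with the preimage $\Phi^{-1}(\mathcal{O}_{\blam})$ of a coadjoint orbit under the frame-operator momentum map, and then deduce connectedness by a direct application of \Cref{cor:general connected}. For the identification, note that under the isomorphism $\alpha : \hermitian(d) \to \mathfrak{u}(d)^\ast$ of \Cref{prop:hermitian coadjoint} the coadjoint orbit $\mathcal{O}_{\blam}$ corresponds precisely to the set of Hermitian matrices with spectrum $\blam$. Combined with \Cref{cor:frame operator coadjoint orbits}, which identifies the momentum map of the diagonal conjugation action as $\Phi(P_1,\dots,P_N) = P_1 + \dots + P_N$, a tuple $(P_1,\dots,P_N) \in \pframes(\br)$ lies in $\Phi^{-1}(\mathcal{O}_{\blam})$ if and only if its sum is Hermitian with spectrum $\blam$, which is exactly the defining condition for $\pframes_{\blam}(\br)$ in \eqref{eq:pframes2}.

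Next I would verify the hypotheses of \Cref{cor:general connected}. The group $\unitary(d)$ is compact; $\mathcal{O}_{\blam}$ is by construction a coadjoint orbit in $\mathfrak{u}(d)^\ast$; and the ambient Hamiltonian $\unitary(d)$-space $\pframes(\br) = \mathcal{O}_{\br_1} \times \dots \times \mathcal{O}_{\br_N}$ is compact, since each factor is the continuous image of the compact group $\unitary(d)$ under the conjugation action on a fixed Hermitian representative (and a finite product of compact spaces is compact). The symplectic structure and momentum map on $\pframes(\br)$ are provided by \Cref{cor:products of coadjoint orbits}. Hence \Cref{cor:general connected} applies and yields that $\Phi^{-1}(\mathcal{O}_{\blam}) = \pframes_{\blam}(\br)$ is connected.

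There is no substantive obstacle at this stage: the genuine work has already been done, in that \Cref{cor:general connected} is derived via the shifting trick from Kirwan's connectedness theorem (\Cref{thm:kirwan connected}), and we invoke both as black boxes. The proposition is therefore essentially a translation result, converting the frame-theoretic description of $\pframes_{\blam}(\br)$ into the symplectic language in which the connectedness of momentum map level sets is already available.
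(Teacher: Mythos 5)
Your proposal matches the paper's argument: the paper also derives this immediately by combining \Cref{cor:frame operator coadjoint orbits} (identifying $\Phi$ with the frame operator on the compact product of coadjoint orbits $\pframes(\br)$) with \Cref{cor:general connected}. The additional verifications you spell out (compactness of the product, the identification of $\mathcal{O}_{\blam}$ via \Cref{prop:hermitian coadjoint}) are exactly the implicit content of the paper's one-line deduction.
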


Note that, if we think of fusion frames in the usual way as a collection of subspaces or, equivalently, orthogonal projectors, this already gives an affirmative answer to \Cref{q:fusion frame homotopy} in the case $\K = \C$.

In the more general operator-valued frame case, the question is how $\pframes_{\blam}(\br)$ relates to $\coframes_{\blam}(\br)$, which is the space we want to show is connected. As previously discussed, while $A_i: \C^d \to \C^{k_i}$ uniquely determines $P_i = A_i^\ast A_i$, the operator $A_i$ cannot be uniquely determined from $P_i$. Indeed, if $U \in \unitary(k_i)$, then 
\[
	(UA_i)^\ast (UA_i) = A_i^\ast U^\ast U A_i = A_i^\ast A_i = P_i,
\]
so composing with a unitary transformation of the codomain leaves $P_i$ invariant. \Cref{prop:indeterminacy} says that this is the only indeterminacy, and hence the set of $P_i$ with given spectrum is precisely the collection of cosets of the (left) unitary action on the set of $A_i$ with given singular values:
\[
	\{P_i: \C^d \to \C^d \,|\, \spec{P_i} = \br_i\} \approx \{A_i:\C^d \to \C^{k_i} \,|\, \spec(A^\ast A) = \br_i\}/\unitary(k_i).
\]

In turn, this implies that
\begin{equation}\label{eq:pframes and oframes}
	\pframes(\br) \approx \oframes(\br)/(\unitary(k_1) \times \dots \times \unitary(k_N)),
\end{equation}
and hence that
\[
	\pframes_{\blam}(\br) \approx \oframes_{\blam}(\br)/(\unitary(k_1) \times \dots \times \unitary(k_N)).
\]
Here, we are using the fact that the operations of taking a level set and taking a quotient commute with one another. That is, suppose that $X$ is a topological space with an action by a group $G$ and $\Psi:X \to Y$ is a $G$-invariant map, so that the induced quotient map $\widetilde{\Psi}:X/G \to Y$ taking an equivalence class $[x]$ to $\Psi(x)$ is well defined; then $\widetilde{\Psi}^{-1}(y) = \Psi^{-1}(y)/G$ for any $y \in Y$. Indeed, $[x] \in \widetilde{\Psi}^{-1}(y)$ if and only if $x \in \Psi^{-1}(y)$, or equivalently $[x] \in \Psi^{-1}(y)/G$. 

Therefore, the following lemma (with $X = \oframes_{\blam}(\br)$ and $G = \unitary(k_1) \times \dots \times \unitary(k_N)$) combined with \Cref{prop:connected operators} implies that $\oframes_{\blam}(\br)$ is connected. Since $\oframes_{\blam}(\br)$ is a real algebraic set in $\C^{k_1\times d} \times \dots \times \C^{k_N \times d}\approx \R^{2(k_1 + \dots + k_N)d}$, it is locally path-connected, so that connectivity implies path-connectivity, completing the proof of \Cref{thm:main}.

\begin{lem}\label{lem:connected quotient}
	Let $X$ be a topological space and let $G$ be a connected topological group acting continuously on $X$. If $X/G$ is connected, then $X$ is connected.
\end{lem}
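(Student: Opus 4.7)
The plan is to proceed by contradiction, exploiting the fact that orbits of a connected group are connected so that any disconnection of $X$ would descend to a disconnection of $X/G$. Suppose, for contradiction, that $X$ is disconnected, so there exist nonempty disjoint open sets $U, V \subseteq X$ with $X = U \cup V$.

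The first key step is to observe that every $G$-orbit in $X$ is connected. Indeed, for any $x \in X$, the action map $a_x : G \to X$, $g \mapsto g \cdot x$, is continuous by hypothesis, so its image $G \cdot x$ is the continuous image of the connected space $G$, hence connected. Since $U$ and $V$ separate $X$ and each orbit is connected, every orbit must be entirely contained in $U$ or entirely in $V$. In particular, $U$ and $V$ are $G$-invariant.

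Next I would pass to the quotient $\pi : X \to X/G$. Since $U$ and $V$ are $G$-invariant, $\pi^{-1}(\pi(U)) = U$ and $\pi^{-1}(\pi(V)) = V$, which implies $\pi(U) \cap \pi(V) = \emptyset$ (any common point would have a preimage orbit meeting both $U$ and $V$). Moreover, $\pi(U)$ and $\pi(V)$ are nonempty and their union is $\pi(X) = X/G$. Finally, the quotient map for a continuous group action is always open: for any open $W \subseteq X$, $\pi^{-1}(\pi(W)) = \bigcup_{g \in G} g \cdot W$ is a union of open sets (each $g \cdot W$ is open because translation by $g$ is a homeomorphism of $X$), so $\pi(W)$ is open in the quotient topology. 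Thus $\pi(U)$ and $\pi(V)$ form a separation of $X/G$, contradicting the hypothesis that $X/G$ is connected.

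There is really no serious obstacle here; the only points requiring care are verifying $G$-invariance of $U$ and $V$ (which uses connectedness of $G$ in an essential way, via the orbit argument) and the openness of $\pi$ (which uses that each $g \in G$ acts by a homeomorphism, a consequence of continuity of the action together with the group inverse). Both are standard and short, so the entire argument fits into a few lines.
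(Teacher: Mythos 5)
Your proof is correct and follows essentially the same route the paper gestures at: the paper observes that connectedness of $G$ forces the fibers of the quotient map (i.e.\ the orbits) to be connected and then cites a standard exercise, whereas you spell out that exercise in full (orbits lie entirely in one piece of any separation, so a separation of $X$ descends to a separation of $X/G$ via the open quotient map). The only small point worth noting is that openness of $\pi$ is not strictly needed once you have shown $U$ and $V$ are saturated: $\pi^{-1}(\pi(U)) = U$ is open, so $\pi(U)$ is open directly from the definition of the quotient topology, without invoking the union $\bigcup_g g\cdot W$. Either way the argument is sound.
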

The lemma follows from standard point-set topology arguments (see, e.g.,~\cite[Exercise~5.5]{Manetti:2015ep}), since connectedness of $G$ implies the fibers of the quotient map are connected.

\section{Tightening Fusion Frames} 
\label{sec:benedetto fickus}

We now specialize to fusion frames, but relax our assumption on the base field, so that $\K = \R$ or $\C$. Recall the definition of the fusion frame potential $\FFP: \fframes \to \R$:
	\[
		\FFP(\mbf{A}) := \left\|S_{\mbf{A}}\right\|^2.
	\]

One of Casazza and Fickus' first results about the fusion frame potential was a Welch-type lower bound:

\begin{prop}[{\cite[Proposition~1]{casazza_minimizing_2009}}]\label{prop:welch bound}
	Let $d,N,k_1, \dots , k_N$ be positive integers and define $n:= k_1 + \cdots + k_N$. Then for $\mbf{A} = (A_1, \dots , A_N) \in \fframes$,
	\[
		\FFP(\mbf{A}) \geq \frac{1}{d}\left(\sum_{i=1}^N k_i\right)^2 = \frac{n^2}{d}
	\]
	with equality if and only if $\mbf{A}$ is a tight fusion frame.
\end{prop}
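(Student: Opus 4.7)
The plan is to reduce the claim to a standard inequality on the eigenvalues of the frame operator, which is a Hermitian positive-definite $d\times d$ matrix with a prescribed trace. First, I would compute $\tr(S_{\mbf{A}})$: since each $P_i = A_i^\ast A_i$ is, by hypothesis, a rank-$k_i$ orthogonal projector, its trace is $k_i$, so
\[
	\tr(S_{\mbf{A}}) = \sum_{i=1}^N \tr(P_i) = \sum_{i=1}^N k_i = n.
\]
Because $S_{\mbf{A}}$ is Hermitian, it is unitarily diagonalizable, and if $\mu_1, \dots, \mu_d$ denote its eigenvalues (counted with multiplicity), then the Frobenius norm squared equals $\sum_{j=1}^d \mu_j^2$, while the trace equals $\sum_{j=1}^d \mu_j = n$.

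Next, I would apply the Cauchy--Schwarz inequality to the vectors $(\mu_1, \dots, \mu_d)$ and $(1, \dots, 1)$ in $\R^d$:
\[
	n^2 = \left(\sum_{j=1}^d \mu_j\right)^2 \leq d \sum_{j=1}^d \mu_j^2 = d \cdot \|S_{\mbf{A}}\|^2,
\]
which rearranges to $\FFP(\mbf{A}) = \|S_{\mbf{A}}\|^2 \geq n^2/d$, as required.

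Finally, to pin down the equality case, I would use the standard fact that equality in Cauchy--Schwarz holds precisely when the two vectors are parallel, i.e., when $\mu_1 = \dots = \mu_d$. Combined with the trace constraint $\sum_j \mu_j = n$, this forces every eigenvalue to equal $n/d$, which is equivalent to $S_{\mbf{A}} = \frac{n}{d}\Id_d$, i.e., $\mbf{A}$ being a tight fusion frame.

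There is no real obstacle here: the only substantive ingredient beyond Cauchy--Schwarz is the observation that $\tr(P_i) = k_i$ for each orthogonal projector $P_i$, which is immediate from the spectral decomposition. The argument works uniformly over $\K = \R$ or $\C$ since $S_{\mbf{A}}$ is Hermitian (hence has real spectrum) in both cases.
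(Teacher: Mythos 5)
Your proof is correct, and since the paper cites this result from Casazza--Fickus without reproducing a proof, there is no in-paper argument to compare against; your Cauchy--Schwarz-on-eigenvalues argument is the standard way to establish this Welch-type bound and matches the spirit of the original. The only thing worth double-checking is the implicit identification of the prescribed frame operator constant: you correctly observe that equality forces $S_{\mbf{A}} = \frac{n}{d}\Id_d$, which is precisely the constant the paper identifies for a TFF with ranks $k_1,\dots,k_N$, so the equivalence with tightness is airtight.
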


Let $\Lambda = \frac{n}{d}$ and define $\bLam=(\Lambda, \dots , \Lambda)$. If they exist, the TFFs must have frame operator $\Lambda \Id_d$, so that $\fframes_{\bLam} \subset \fframes$ is exactly the collection of TFFs. Hence, \Cref{prop:welch bound} says that if this collection of TFFs is nonempty then it is exactly the set of global minimizers of $\FFP$ in $\fframes$.

The hard work of proving \Cref{thm:descent} is in proving the $\K = \C$ case. The real case then follows immediately since $\rfframes \subset \cfframes$ is invariant under the gradient flow of $\FFP$. We record this observation in the following proposition:

\begin{prop}\label{prop:complex implies real}
    For given $d$ and $\bk$, if \Cref{thm:descent} is true for $\cfframes$, then it is also true for $\rfframes$.
\end{prop}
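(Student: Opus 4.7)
The strategy is to exploit the fact that $\rfframes$ sits inside $\cfframes$ as the fixed-point set of the smooth isometric involution $\sigma \colon \cfframes \to \cfframes$ given by entrywise complex conjugation, $(A_1, \dots, A_N) \mapsto (\overline{A}_1, \dots, \overline{A}_N)$. Since $\FFP$ is $\sigma$-invariant (the Frobenius norm is preserved by complex conjugation), the gradient $\grad \FFP$ is $\sigma$-equivariant, so at any real point it is fixed by $d\sigma$ and hence tangent to $\rfframes$. Consequently, the complex negative gradient flow preserves $\rfframes$, and restricted to $\rfframes$ it agrees with the real negative gradient flow of $\FFP$.

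To invoke the complex case of \Cref{thm:descent} on a real initial frame $\mbf{A}_0 \in \rfframes$ with real property $\mathscr{S}$, I must verify that the same frame, viewed as an element of $\cfframes$ with subspaces $\mathcal{S}_i^\C \subset \C^d$, satisfies complex property $\mathscr{S}$. The plan is a linear-algebra lemma: for any proper complex subspace $W \subset \C^d$, both $W + \overline{W}$ and $W \cap \overline{W}$ are closed under complex conjugation and hence arise as complexifications $W_1^\C$, $W_2^\C$ of real subspaces $W_1, W_2 \subseteq \R^d$, which satisfy $\dim_\R W_1 + \dim_\R W_2 = 2 \dim_\C W$. Combining the formula $\mathcal{S}_i^\C \cap V^\C = (\mathcal{S}_i \cap V)^\C$ for real $V$ with an inclusion--exclusion bound for $\dim_\C(\mathcal{S}_i^\C \cap W) + \dim_\C(\mathcal{S}_i^\C \cap \overline{W})$ (the latter equaling $2 \dim_\C(\mathcal{S}_i^\C \cap W)$ since $\mathcal{S}_i^\C$ is conjugation-invariant) reduces the complex property $\mathscr{S}$ inequality for $W$ to the sum of the real property $\mathscr{S}$ inequalities for $W_1$ and $W_2$.

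Having established complex property $\mathscr{S}$, the complex case of \Cref{thm:descent} yields a trajectory converging to some $\mbf{A}_\infty \in \cfframes$ that is a global minimizer of $\FFP$ over $\cfframes$, and the first paragraph forces the entire trajectory, and in particular $\mbf{A}_\infty$, to lie in $\rfframes$. Real property $\mathscr{S}$ on $\mbf{A}_0$ guarantees that real TFFs exist (and hence complex TFFs too, by the field independence of admissibility), so by \Cref{prop:welch bound} the Welch bound $n^2/d$ is attained over both $\cfframes$ and $\rfframes$. Therefore $\FFP(\mbf{A}_\infty) = n^2/d$, making $\mbf{A}_\infty$ a real TFF and in particular a global minimizer of $\FFP$ on $\rfframes$. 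The main obstacle is the linear-algebra lemma in the second paragraph; the first and third paragraphs are essentially formal consequences of recognizing $\rfframes$ as the fixed-point set of an isometric reflection on $\cfframes$.
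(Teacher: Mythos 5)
Your proof is correct, and it is considerably more careful than the paper's, which simply asserts that the real case ``follows immediately since $\rfframes \subset \cfframes$ is invariant under the gradient flow of $\FFP$.'' You correctly identify the point the paper glosses over: property $\mathscr{S}$ for $\rfframes$ quantifies over real subspaces $\mathcal{Q} \subset \R^d$, while the complex version of \Cref{thm:descent} requires quantification over complex subspaces $W \subset \C^d$, so ``real $\mathscr{S} \Rightarrow$ complex $\mathscr{S}$'' genuinely needs an argument. Your sub-modularity argument supplies it cleanly: for a proper $W \subset \C^d$, both $W + \overline{W}$ and $W \cap \overline{W}$ are conjugation-closed, hence of the form $W_1^\C$, $W_2^\C$ for real subspaces with $\dim_\R W_1 + \dim_\R W_2 = 2\dim_\C W$; since $\overline{W}$ gives the same intersection dimensions with the conjugation-invariant $\mathcal{S}_i^\C$, the lattice inequality $\dim(\mathcal{S}_i^\C\cap W) + \dim(\mathcal{S}_i^\C\cap\overline W) \le \dim(\mathcal{S}_i^\C\cap(W+\overline W)) + \dim(\mathcal{S}_i^\C\cap(W\cap\overline W))$ together with the identification $\mathcal{S}_i^\C \cap V^\C = (\mathcal{S}_i\cap V)^\C$ reduces the complex inequality for $W$ to the real ones for $W_1$ and $W_2$ (and the borderline cases $W_j = \R^d$ or $\{0\}$ hold trivially). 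The first paragraph is also right: $\FFP$ is conjugation-invariant, the involution is isometric, so the complex gradient at a real point is real and agrees with the intrinsic gradient on $\rfframes$.

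One simplification: your third paragraph does more work than needed and leans on \Cref{cor:tight iff S} and the Horn-conjecture field-independence, both of which appear later in the paper's logical development. There is no circularity, but a one-line argument suffices: since $\mbf{A}_\infty$ lies in $\rfframes \subset \cfframes$ and minimizes $\FFP$ over the larger set $\cfframes$, it automatically minimizes $\FFP$ over the subset $\rfframes$. No appeal to the Welch bound or to the existence of TFFs is required.
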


Except where explicitly pointed out below, we will assume $\K=\C$ in what follows. The strategy for proving the complex case of \Cref{thm:descent} is to show that property $\mathscr{S}$ (\Cref{def:property S}) satisfies the following conditions:
\begin{enumerate}[(i)]
	\item \label{it:second good property} gradient flow (and its limit) preserves $\mathscr{S}$, but
	\item \label{it:last good property} no non-minimizing critical point of the operator-valued frame potential satisfies $\mathscr{S}$.
\end{enumerate}

Our argument has roots in Mumford's \emph{geometric invariant theory (GIT)}~\cite{mumford_geometric_1994} (see~\cite{thomas_notes_2005} for a nice introduction), which we introduce in some generality before specializing.

Let $G$ be a reductive algebraic group that acts linearly on a finite-dimensional complex vector space $V$. For example, $G$ might be $GL(V)$ or $SL(V)$. A nonzero vector $v \in V$ is \emph{unstable} under the action of $G$ if the closure $\overline{G \cdot v}$ of the $G$-orbit of $v$ contains the origin; otherwise $v$ is \emph{semi-stable}. Notice that the unstable points are precisely those in the vanishing locus of every $G$-invariant homogeneous polynomial on $V$, and hence the semi-stable points are those on which some $G$-invariant homogeneous polynomial does not vanish.

As one might expect, semi-stability is a feature of the orbit of $v$: either the entire orbit consists of semi-stable points, or the entire orbit consists of unstable points.

\begin{prop}[{see, e.g.,~\cite[Proposition~6]{mixon_three_2021}}]\label{prop:semistable orbits}
	Given a nonzero $v \in V$ that is semi-stable, every point in $\overline{G \cdot v}$ is also semi-stable.
\end{prop}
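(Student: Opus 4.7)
The plan is to work directly from the orbit-closure definition of semi-stability, rather than passing through invariant polynomials. The key observation is that $\overline{G \cdot v}$ is itself $G$-invariant: since the action map $G \times V \to V$ is continuous and $G \cdot v$ is $G$-invariant, so is its closure. Consequently, for any $w \in \overline{G \cdot v}$, one has $G \cdot w \subseteq \overline{G \cdot v}$, and taking closures gives the nesting $\overline{G \cdot w} \subseteq \overline{G \cdot v}$.

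With this inclusion in hand, I would finish as follows. Because $v$ is semi-stable, $0 \notin \overline{G \cdot v}$, so any $w \in \overline{G \cdot v}$ is automatically nonzero; hence it makes sense to ask whether $w$ is semi-stable. Now suppose for contradiction that $w$ is unstable, meaning $0 \in \overline{G \cdot w}$. Combined with the inclusion $\overline{G \cdot w} \subseteq \overline{G \cdot v}$, this yields $0 \in \overline{G \cdot v}$, contradicting the semi-stability of $v$. Therefore $w$ must be semi-stable.

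An alternative route would use the invariant-polynomial characterization mentioned in the excerpt: semi-stability of $v$ is equivalent to the existence of a $G$-invariant homogeneous polynomial $f$ of positive degree with $f(v) \neq 0$. For $w \in \overline{G \cdot v}$, pick $g_n \in G$ with $g_n \cdot v \to w$; then $f(w) = \lim_{n \to \infty} f(g_n \cdot v) = f(v) \neq 0$ by continuity and $G$-invariance, so the same polynomial witnesses semi-stability of $w$ (and forces $w \neq 0$, since homogeneity of positive degree gives $f(0) = 0$). Either approach is immediate from the definitions and the continuity of the action, so I do not anticipate any real obstacle — the proposition is essentially the formal observation that the relation ``orbit closure contains the origin'' propagates down the nesting of orbit closures.
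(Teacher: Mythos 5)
Your proof is correct. The paper does not give a proof of this proposition---it is a citation to \cite[Proposition~6]{mixon_three_2021}---so the right comparison is to the standard GIT argument, and your first route (the nesting $\overline{G \cdot w} \subseteq \overline{G \cdot v}$ for $w \in \overline{G \cdot v}$, which follows from continuity of the action, combined with the observation that $0 \notin \overline{G \cdot v}$ forces every $w \in \overline{G \cdot v}$ to be nonzero) is exactly that standard argument; it is clean and complete.

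Your second route is also correct, but one small refinement is worth making. In GIT, orbit closure is conventionally taken in the Zariski topology, so ``pick $g_n \in G$ with $g_n \cdot v \to w$'' is a priori a Euclidean-topology step. Over $\C$ this is harmless, because orbits of algebraic groups are locally closed and the Zariski and Euclidean closures of constructible sets coincide; but you can sidestep the issue entirely with the algebraic form of the argument: $f$ is $G$-invariant, so $f - f(v)$ vanishes on $G \cdot v$; the vanishing locus of $f - f(v)$ is Zariski-closed and contains $G \cdot v$, hence contains $\overline{G \cdot v}$, so $f(w) = f(v) \neq 0$ for every $w$ in the orbit closure. Either fix is fine; the underlying argument is sound.
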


\subsection{$V$ and the $\SL(d)$ Action}
\label{sub:action}

In our application of GIT, we will have $G = \SL(d)$. To determine the appropriate vector space $V$, we first recall the \emph{Pl\"ucker embedding} $\Gr{k}{d} \to \mathbb{P}(\bigwedge^k \C^d)$, defined on the Grassmannian $\Gr{k}{d}$ of $k$-dimensional linear subspaces of $\C^d$. We can represent a $k$-plane by any basis $v_1, \dots , v_k$ for it. Then the Pl\"ucker embedding is defined to be the projectivization of the map $\tau: (\C^d)^k \to \bigwedge^k \C^d$ defined by
\[
	\tau(v_1, \dots , v_k) := v_1 \wedge \dots \wedge v_k.
\]
When $(u_1, \dots , u_k)$ and $(v_1, \dots , v_k)$ span the same $k$-dimensional subspace, then $\tau(u_1, \dots , u_k) = \det(h) \tau(v_1, \dots , v_k)$ where $h \in \GL(k)$ is the change-of-basis matrix, so both map to the same point in projective space, and the Pl\"ucker embedding is well-defined on the Grassmannian. Of course, the standard action of $\SL(d)$ on $\C^d$ induces an action on $\bigwedge^k \C^d$ by
\[
	g \cdot (v_1 \wedge \dots \wedge v_k) := (g v_1) \wedge \dots \wedge (gv_k)
\]
and extending linearly.

How do we get from fusion frames to Grassmannians, and hence to such a representation of $\SL(d)$?

For $\mbf{A}  \in \cfframes$, each $P_i = A_i^\ast A_i$ is a rank-$k_i$ orthogonal projector and the rows $a_{i1}, \dots , a_{ik_i}$ of $A_i$ give an orthonormal basis for the $k_i$-dimensional subspace which is the image of $P_i$. Moreover, $\mathcal{O}_{\br_i}$ is the collection of all rank-$k_i$ orthogonal projectors, which is symplectomorphic to the Grassmannian $\Gr{k_i}{d}$. 

Define $\tau_i: A_i \mapsto a_{i1}^\ast \wedge \dots \wedge a_{ik_i}^\ast$, the projectivization of which is exactly the Pl\"ucker embedding of $\Gr{k_i}{d}$, and $\tau_i$ is equivariant with respect to the right $\SL(d)$ action $g \cdot A_i := A_i g^\ast$ on the domain and the $\SL(d)$ action described above on the codomain:
\[
	\tau(g \cdot A_i) = \tau(A_i g^\ast) = (g a_{i1}^\ast) \wedge \dots \wedge (g a_{ik_i}^\ast) = g \cdot \tau(A_i)
\]
for any $g \in \SL(d)$. 

Next, define $\tau: \cfframes \to \bigwedge^{k_1} \C^d \otimes \dots \otimes \bigwedge^{k_N} \C^d$ by $\tau := \tau_1 \otimes \dots \otimes \tau_N$, so that
\[
	\tau(\mbf{A}) = \tau_1(A_1) \otimes \dots \otimes \tau_N(A_N) = \left(a_{11}^\ast \wedge \dots \wedge a_{1k_1}^\ast\right) \otimes \dots \otimes \left(a_{N1}^\ast \wedge \dots \wedge a_{Nk_N}^\ast\right).
\]
In other words, the projectivization of $\tau$ is the Segre embedding of the product of Pl\"ucker embeddings of the individual factors.

Finally, then, our vector space $V = \bigwedge^{k_1} \C^d \otimes \dots \otimes \bigwedge^{k_N} \C^d$, on which $G = \SL(d)$ acts by
\[
	g \cdot \left((v_{11} \wedge \dots \wedge v_{1k_1}) \otimes \dots \otimes (v_{N1} \wedge \dots \wedge v_{Nk_N})\right) := \left( (g v_{11}) \wedge \dots \wedge (g_{1k_1})) \otimes \dots \otimes ((gv_{N1}) \wedge \dots \wedge (gv_{Nk_N}))\right)
\]
and extending linearly.

The point of defining property $\mathscr{S}$ as we have is the following theorem of Mumford (stated originally in terms of Grassmannians):

\begin{thm}[{Mumford~\cite[Proposition~4.3]{mumford_geometric_1994}; see also~\cite{mumford_projective_1963} and~\cite[\S16.3]{Kirwan:1984jt}}]\label{thm:semi-stability equals S}
	$\mbf{A}$ has property $\mathscr{S}$ if and only if $\tau(\mbf{A})$ is semi-stable with respect to the $\SL(d)$ action.
\end{thm}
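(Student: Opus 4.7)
The strategy is to invoke the Hilbert--Mumford numerical criterion, which characterizes semi-stability via one-parameter subgroups: for a linear $\SL(d)$-representation $V$, a nonzero $v \in V$ is semi-stable if and only if for every one-parameter subgroup $\lambda: \C^* \to \SL(d)$ the minimum weight $w_{\min}(v,\lambda)$ appearing in the $\lambda$-weight decomposition of $v$ satisfies $w_{\min}(v,\lambda) \leq 0$. Every such $\lambda$ is conjugate to a diagonal one, so after choosing a unitary basis $e_1,\dots,e_d$ of eigenvectors I may assume $\lambda(t)e_j = t^{m_j} e_j$ with integer weights $m_1 \geq m_2 \geq \cdots \geq m_d$ and $\sum_j m_j = 0$. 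Since $\tau(\mbf{A}) = \tau_1(A_1) \otimes \cdots \otimes \tau_N(A_N)$ lies in a tensor product of $\lambda$-weight spaces, its minimum weight decomposes as $w_{\min}(\tau(\mbf{A}),\lambda) = \sum_{i=1}^N w_{\min}(\tau_i(A_i),\lambda)$, so it suffices to control each factor.

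The central technical step is a formula for the minimum weight of each $\tau_i(A_i) \in \bigwedge^{k_i}\C^d$ in terms of how $\mathcal{S}_i$ meets the flag $F_q := \spa(e_1,\dots,e_q)$. Writing $d_{i,q} := \dim(\mathcal{S}_i \cap F_q)$, I claim
$$w_{\min}(\tau_i(A_i), \lambda) = \sum_{q=1}^d m_q \,(d_{i,q} - d_{i,q-1}).$$
To prove this, one expands $\tau_i(A_i)$ in the Pl\"ucker basis $\{e_{l_1}\wedge\cdots\wedge e_{l_{k_i}}\}$ (whose weights are $m_{l_1}+\cdots+m_{l_{k_i}}$) and identifies the minimizing index set as the jump set $J := \{q : d_{i,q} > d_{i,q-1}\}$. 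Choosing a basis of $\mathcal{S}_i$ adapted to $F_\bullet$---with the first $d_{i,q}$ vectors spanning $\mathcal{S}_i \cap F_q$---yields a coordinate matrix in echelon form showing that the Pl\"ucker coordinate indexed by $J$ is nonzero; meanwhile every other nonzero Pl\"ucker coordinate $L = \{l_1 < \cdots < l_{k_i}\}$ lies in the Schubert cell of $\mathcal{S}_i$ and hence satisfies $l_i \leq q_i$ pointwise, which combined with $m_{l_i} \geq m_{q_i}$ shows $\sum_i m_{l_i} \geq \sum_i m_{q_i}$. I expect this Pl\"ucker/Schubert bookkeeping to be the main technical obstacle, although the argument is classical.

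Both directions now follow from the formula by Abel summation. Setting $D_q := \sum_{i=1}^N d_{i,q}$, so that $D_0 = 0$ and $D_d = n$, the per-factor formula aggregates to
$$\sum_{i=1}^N w_{\min}(\tau_i(A_i),\lambda) = m_d\, n + \sum_{q=1}^{d-1}(m_q - m_{q+1})\, D_q.$$
For semi-stability $\Rightarrow \mathscr{S}$, given a proper $\mathcal{Q} \subset \C^d$ of dimension $q$, choose the basis so $F_q = \mathcal{Q}$ and take the one-parameter subgroup $\eta_q$ with weight vector $(d-q,\dots,d-q,-q,\dots,-q)$; the aggregated formula then reduces to $d\sum_i \dim(\mathcal{S}_i \cap \mathcal{Q}) - qn \leq 0$, which is exactly property $\mathscr{S}$ for $\mathcal{Q}$. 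For the converse, apply $\mathscr{S}$ to each $F_q$ to get $D_q \leq qn/d$; since $m_q - m_{q+1} \geq 0$, substituting and simplifying via the identity $\sum_{q=1}^{d-1} q\,(m_q - m_{q+1}) = -d\,m_d$ (another Abel summation, using $\sum m_j = 0$) gives $\sum_i w_{\min}(\tau_i(A_i),\lambda) \leq m_d n + (-d\, m_d)(n/d) = 0$, and Hilbert--Mumford concludes that $\tau(\mbf{A})$ is semi-stable.
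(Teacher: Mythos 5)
The paper does not prove this statement; it cites it as Mumford's result (\emph{GIT} Prop.~4.3, the 1963 paper, and Kirwan \S16.3), so there is no in-paper proof to compare against. Your proof is correct and reconstructs the argument that appears in those references: reduce to the Hilbert--Mumford numerical criterion (which the paper quotes separately, as \Cref{thm:hilbert-mumford}), diagonalize the one-parameter subgroup with weights $m_1 \geq \dots \geq m_d$ summing to zero, compute the minimum $\lambda$-weight of each Pl\"ucker point $\tau_i(A_i)$ as $\sum_q m_q(d_{i,q}-d_{i,q-1})$ via the jump sequence of $\mathcal{S}_i$ against the induced flag, use additivity of minimum weights across tensor factors, Abel-sum to get $m_d n + \sum_{q<d}(m_q-m_{q+1})D_q$, and match against property $\mathscr{S}$ in both directions (the ``one inequality per test subspace'' one-parameter subgroups $\eta_q$ in one direction, the identity $\sum_{q<d} q(m_q-m_{q+1}) = -d\,m_d$ in the other). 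Two small remarks. First, the diagonalizing basis need not be \emph{unitary}---a one-parameter subgroup of $\SL(d)$ is diagonalizable but generally not by a unitary matrix---and this is harmless because property $\mathscr{S}$ quantifies over arbitrary proper subspaces $\mathcal{Q}$, not just orthogonal ones, so you should drop the word ``unitary.'' Second, the Schubert-cell step---that nonzero Pl\"ucker coordinates $L$ of $\mathcal{S}_i$ satisfy $L \leq J$ entrywise, where $J$ is the jump set---is exactly the key combinatorial input; you wave at the echelon-form argument, and since it is the only non-formal step it would be worth writing out (a nonzero term of the $k\times k$ minor on columns $L$ forces a matching $l_{\sigma(j)} \leq q_j$, and the standard staircase/Hall argument upgrades this to $l_j \leq q_j$ after sorting).
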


As pointed out just before \Cref{prop:semistable orbits}, the semi-stable points in $V$ are those on which some $G$-invariant homogeneous polynomial does not vanish. Hence, \Cref{thm:semi-stability equals S} implies that if $\mbf{A} \in \cfframes$ has property $\mathscr{S}$, then there is some $G$-invariant homogeneous polynomial which does not vanish at $\tau(\mbf{A})$. Since the coordinates of $\tau(\mbf{A})$ are precisely the determinants of all the $k_i \times k_i$ minors of the $A_i$, and since these determinants are themselves polynomials in the entries of $\mbf{A}$, this means that there is some polynomial expression in the coordinates of $\mbf{A}$ which does not vanish. Therefore, the collection of $\mbf{A}$ with property $\mathscr{S}$ is Zariski-open in the smooth, connected, real algebraic variety $\cfframes$, and hence is either empty or dense (see, e.g.,~\cite[Proposition~5.11]{cahill2017connectivity}). Moreover, the same reasoning applies in $\rfframes$. Therefore, we have:

\begin{prop}\label{prop:dense}
	Let $\K = \R$ or $\C$. When it is non-empty, the collection of all fusion frames in $\fframes$ with property $\mathscr{S}$ is dense. 
\end{prop}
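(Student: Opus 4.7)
The plan is to exhibit property $\mathscr{S}$ as a Zariski-open condition on $\fframes$ viewed as a real algebraic variety, and then invoke the standard fact that a nonempty Zariski-open subset of a smooth, connected real algebraic variety is dense in the Euclidean topology. The argument splits into two phases: first, identifying property $\mathscr{S}$ with polynomial non-vanishing; second, applying the density principle.

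For the first phase, I would combine \Cref{thm:semi-stability equals S} with the definition of semi-stability. By Mumford's characterization, $\mbf{A}$ has property $\mathscr{S}$ if and only if $\tau(\mbf{A})$ is semi-stable under the $\SL(d)$-action on $V$, which in turn holds if and only if some $\SL(d)$-invariant homogeneous polynomial $f$ on $V$ satisfies $f(\tau(\mbf{A})) \neq 0$. Since the coordinates of $\tau(\mbf{A})$ are the Pl\"ucker minors of the matrices $A_i$, which are polynomials in the entries of $\mbf{A}$, the composition $f \circ \tau$ is itself a polynomial in those entries. Ranging over all such $f$, the locus where at least one $f \circ \tau$ is nonzero is Zariski-open in the ambient matrix space. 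In the real case, taking real and imaginary parts of $f \circ \tau$ yields real polynomials whose non-simultaneous vanishing defines the analogous Zariski-open subset.

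For the second phase, I would cite the standard result (e.g., \cite[Proposition~5.11]{cahill2017connectivity}) that a nonempty Zariski-open subset of a smooth, connected real algebraic variety is dense in the Euclidean topology. By hypothesis the Zariski-open subset from the first phase is nonempty when intersected with $\fframes$, so this immediately yields the proposition, provided $\fframes$ is itself a smooth, connected real algebraic variety.

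The main obstacle is therefore confirming this ambient hypothesis. Over $\C$, connectedness of $\cfframes$ follows from \Cref{thm:main} (taking each $\br_i$ to be a list of ones and leaving $\blam$ unconstrained), while smoothness can be checked via the implicit function theorem applied to the defining equations $A_i A_i^\ast = \Id_{k_i}$. For $\rfframes$, the same implicit function theorem argument gives smoothness, and connectedness should follow either via an essentially parallel argument to the complex case, or more directly from the observation that $\rfframes$ admits a surjective continuous parametrization from a connected space (for instance, Gram-Schmidt applied to an $N$-tuple of real Gaussian matrices, as alluded to at the start of \Cref{sec:benedetto fickus}).
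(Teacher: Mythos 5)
Your proposal follows essentially the same approach as the paper: semi-stability of $\tau(\mbf{A})$ translates (via \Cref{thm:semi-stability equals S} and the definition of semi-stability) into non-vanishing of some $\SL(d)$-invariant polynomial in the Pl\"ucker coordinates of $\mbf{A}$, which makes property $\mathscr{S}$ a Zariski-open condition on $\fframes$, and then one invokes \cite[Proposition~5.11]{cahill2017connectivity} to get density from non-emptiness. You are somewhat more explicit than the paper about verifying the ambient hypotheses (smoothness and connectedness of $\fframes$), which the paper simply asserts, and about the passage to the real case via real and imaginary parts of the invariant polynomials.

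One small inaccuracy in the verification phase: \Cref{thm:main} cannot be used as you propose to establish connectedness of $\cfframes$, since it concerns the level sets $\coframes_{\blam}(\br)$ for a \emph{fixed} $\blam$; these level sets are pairwise disjoint as $\blam$ varies, so connectedness of each does not immediately imply connectedness of their union $\cfframes = \coframes(\br)$. The cleaner route is the one you gesture at second: $\cfframes$ is the open subset of the connected product of complex Stiefel manifolds $\prod_i V_{k_i}(\C^d)$ (which parametrizes tuples $(A_1,\dots,A_N)$ with each $A_i A_i^\ast = \Id_{k_i}$) obtained by removing the proper closed algebraic subvariety where the frame operator is degenerate, and such a removal cannot disconnect a smooth connected variety. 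The same argument applies verbatim to $\rfframes$.
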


We also take this opportunity to show that TFFs always have property $\mathscr{S}$.

\begin{prop}\label{prop:tight implies S}
	Let $\K = \R$ or $\C$ and suppose $\mbf{A} \in \fframes$ is a TFF. Then $\mbf{A}$ has property $\mathscr{S}$.
\end{prop}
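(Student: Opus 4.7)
The plan is to exploit the defining equation $\sum_{i=1}^N P_i = \frac{n}{d} \Id_d$ of a tight fusion frame by compressing both sides to an arbitrary proper subspace $\mathcal{Q} \subset \K^d$ and comparing traces. Concretely, let $\Pi_\mathcal{Q}$ denote the orthogonal projector onto $\mathcal{Q}$, conjugate the TFF identity by $\Pi_\mathcal{Q}$, and take traces to obtain
\[
    \sum_{i=1}^N \tr(\Pi_\mathcal{Q} P_i \Pi_\mathcal{Q}) \;=\; \frac{n}{d}\,\tr(\Pi_\mathcal{Q}) \;=\; \frac{n}{d}\,\dim\mathcal{Q}.
\]
The content of the proof is then the pointwise lower bound
\[
    \tr(\Pi_\mathcal{Q} P_i \Pi_\mathcal{Q}) \;\geq\; \dim(\mathcal{S}_i \cap \mathcal{Q})
\]
for every $i$, since summing these and dividing by $\dim\mathcal{Q}$ immediately yields property $\mathscr{S}$.

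To establish that inequality, I would note that $\Pi_\mathcal{Q} P_i \Pi_\mathcal{Q} = (P_i \Pi_\mathcal{Q})^\ast (P_i \Pi_\mathcal{Q})$ is positive semi-definite, and since it is bounded above by $\Pi_\mathcal{Q}$ in the Loewner order, its eigenvalues all lie in $[0,1]$. The key observation is that every unit vector $v \in \mathcal{S}_i \cap \mathcal{Q}$ is simultaneously fixed by $P_i$ and by $\Pi_\mathcal{Q}$, hence satisfies $\Pi_\mathcal{Q} P_i \Pi_\mathcal{Q} v = v$; thus the compressed operator has $1$ as an eigenvalue with multiplicity at least $\dim(\mathcal{S}_i \cap \mathcal{Q})$. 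Since the remaining eigenvalues are non-negative, the trace is at least $\dim(\mathcal{S}_i \cap \mathcal{Q})$, as required.

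I do not anticipate a serious obstacle: the argument is essentially a one-line spectral estimate wrapped around the TFF identity, and it works uniformly over $\K=\R$ or $\C$ since everything in sight is expressed in terms of real traces of Hermitian/symmetric operators. The only mild subtlety is making sure one remembers that the compression $\Pi_\mathcal{Q} P_i \Pi_\mathcal{Q}$ is correctly identified as positive semi-definite with spectrum in $[0,1]$, so that the multiplicity of the eigenvalue $1$ (which captures the intersection dimension) furnishes a lower bound on the trace rather than just a partial contribution.
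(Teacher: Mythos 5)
Your proof is correct and follows essentially the same route as the paper's: compare traces against the TFF identity $S_{\mbf{A}} = \tfrac{n}{d}\Id_d$ and observe that each compression $\Pi_\mathcal{Q} P_i \Pi_\mathcal{Q}$ has trace at least $\dim(\mathcal{S}_i \cap \mathcal{Q})$ because vectors in $\mathcal{S}_i\cap\mathcal{Q}$ are fixed. The only difference is cosmetic: the paper works with $\tr(P_\mathcal{Q} P_i)$ and cites the fact that eigenvalues of a product of projectors are real and nonnegative, whereas you use the symmetrized operator $\Pi_\mathcal{Q} P_i \Pi_\mathcal{Q}=(P_i\Pi_\mathcal{Q})^\ast(P_i\Pi_\mathcal{Q})$, which makes positivity immediate; by cyclicity of trace the two quantities agree.
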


\begin{proof}
	Since $\mbf{A}$ is tight, its frame operator $S_{\mbf{A}} = \frac{n}{d}\Id_d$. Let $\mathcal{Q}\subset \K^d$ be a proper subspace and let $P_{\mathcal{Q}}$ be orthogonal projection onto $\mathcal{Q}$. 
	
	For each $i=1,\dots , N$, any nonzero vector in $\mathcal{Q} \cap \mathcal{S}_i$ is fixed by the product $P_{\mathcal{Q}}P_i$, and hence $\tr(P_{\mathcal{Q}}P_i) \geq \dim(\mathcal{Q} \cap \mathcal{S}_i)$, since all eigenvalues of $P_{\mathcal{Q}}P_i$ are real and non-negative. Therefore, since $P_{\mathcal{Q}}S_{\mbf{A}} = \frac{n}{d}P_{\mathcal{Q}}$,
	\[
		\frac{n}{d} \dim(\mathcal{Q}) = \tr(P_{\mathcal{Q}}S_{\mbf{A}}) = \sum_{i=1}^N \tr(P_{\mathcal{Q}}P_i) \geq \sum_{i=1}^N \dim(\mathcal{Q} \cap \mathcal{S}_i),
	\]
	so $\mbf{A}$ has property $\mathscr{S}$.
\end{proof}

Combining \Cref{prop:dense,prop:tight implies S} yields the following immediate corollary:

\begin{cor}\label{cor:dense}
	Whenever there are TFFs in $\fframes$, the fusion frames with property $\mathscr{S}$ are dense in $\fframes$.
\end{cor}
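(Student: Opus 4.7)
The plan is to chain together \Cref{prop:dense} and \Cref{prop:tight implies S}. \Cref{prop:dense} already gives the dichotomy that the set of fusion frames with property $\mathscr{S}$ is either empty or dense in $\fframes$, so all that remains is to rule out emptiness under the hypothesis that $\fframes$ contains a TFF.

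First I would assume that $\fframes$ contains some tight fusion frame $\mbf{A}$. By \Cref{prop:tight implies S}, this $\mbf{A}$ has property $\mathscr{S}$, so the subset of $\fframes$ consisting of fusion frames with property $\mathscr{S}$ is non-empty. Then \Cref{prop:dense} applies directly and gives that this subset is dense in $\fframes$, which is the conclusion.

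There is really no obstacle here; the work has already been done in establishing \Cref{prop:dense} (via the Zariski-open characterization coming from \Cref{thm:semi-stability equals S}) and in the trace inequality argument for \Cref{prop:tight implies S}. The content of the corollary is simply that the hypothesis ``TFFs exist'' is strong enough to activate the ``non-empty'' branch of the dichotomy in \Cref{prop:dense}.
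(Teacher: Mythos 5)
Your proposal is correct and is exactly the paper's argument: the paper introduces \Cref{cor:dense} with the sentence ``Combining \Cref{prop:dense,prop:tight implies S} yields the following immediate corollary,'' and your chain of reasoning (TFF exists $\Rightarrow$ by \Cref{prop:tight implies S} the set with property $\mathscr{S}$ is non-empty $\Rightarrow$ by \Cref{prop:dense} it is dense) is precisely that combination.
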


\subsection{Property $\mathscr{S}$ Satisfies \ref{it:second good property} and \ref{it:last good property}}
\label{sub:good properties}

\subsubsection{Property $\mathscr{S}$ Satisfies \ref{it:second good property}}\label{subsub:property ii}

The goal in this subsection is to show that the gradient flow of $\FFP$ preserves property $\mathscr{S}$. 

Notice that, if $\mbf{A} = (A_1, \dots, A_N)$ is a fusion frame, then the rows of each $A_i$ form an orthonormal set, and hence each $a_{i1}^\ast \wedge \dots \wedge a_{ik_i}^\ast \in \bigwedge^{k_i} \C^d$ is a unit vector with respect to the standard inner product on $\bigwedge^{k_i} \C^d$. In turn, this implies that $\tau(\mbf{A}) \in \left( \bigwedge^{k_1} \C^d \right) \otimes \dots \otimes \left(\bigwedge^{k_N} \C^d\right)$ is also a unit vector. In other words:
\begin{lem}\label{lem:bounded away from zero}
	$\tau\left(\cfframes\right)$ is contained in the unit sphere, and in particular is bounded away from the origin.
\end{lem}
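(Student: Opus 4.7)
The plan is to observe that this lemma is essentially a direct computation once the inner product structures are set up correctly. The key input is the defining property of a fusion frame, namely that each $P_i = A_i^\ast A_i$ is an orthogonal projector of rank $k_i$. Combined with the simplifying assumption that each $A_i$ has full rank, this forces the singular values of $A_i$ all to equal $1$ (since they are the square roots of the nonzero eigenvalues of $P_i$, and those nonzero eigenvalues are all $1$). Equivalently, $A_i A_i^\ast = I_{k_i}$, so the rows $a_{i1}^\ast, \dots, a_{ik_i}^\ast$ of $A_i$ form an orthonormal set in $\C^d$.

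Next, I would recall the standard formula for the induced Hermitian inner product on $\bigwedge^{k_i} \C^d$: for vectors $v_1, \dots, v_{k_i} \in \C^d$,
\[
    \|v_1 \wedge \dots \wedge v_{k_i}\|^2 = \det\!\bigl(\langle v_j, v_\ell\rangle\bigr)_{j,\ell=1}^{k_i}.
\]
Applied to the orthonormal rows of $A_i$, the Gram matrix is the $k_i \times k_i$ identity, so $\det = 1$ and therefore $\|\tau_i(A_i)\| = 1$ for each $i=1,\dots, N$.

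Finally, the standard Hermitian inner product on a tensor product of Hilbert spaces is the unique one satisfying $\langle u_1 \otimes \dots \otimes u_N, v_1 \otimes \dots \otimes v_N\rangle = \prod_i \langle u_i, v_i\rangle$ on simple tensors, so that norms are multiplicative under tensoring. Since $\tau(\mathbf{A}) = \tau_1(A_1) \otimes \dots \otimes \tau_N(A_N)$ is a tensor product of unit vectors, one concludes
\[
    \|\tau(\mathbf{A})\| \;=\; \prod_{i=1}^N \|\tau_i(A_i)\| \;=\; 1.
\]
Thus $\tau(\mathbf{A})$ lies on the unit sphere of $\bigwedge^{k_1} \C^d \otimes \dots \otimes \bigwedge^{k_N}\C^d$, which is at distance exactly $1$ from the origin, giving the ``bounded away from the origin'' conclusion.

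There is no genuine obstacle here; the proof is essentially a one-line observation that fusion frames correspond to orthonormal row configurations and that both the Pl\"ucker and Segre maps preserve unit norm. The only thing worth being careful about is consistency of the inner product conventions on $\bigwedge^{k_i}\C^d$ and on tensor products, but these are the standard induced ones compatible with the Hilbert space structure on $\C^d$, and no nonstandard normalization is introduced anywhere in the setup of $\tau$.
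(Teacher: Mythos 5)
Your proof is correct and follows the same route as the paper: orthonormality of the rows of each $A_i$ forces $\tau_i(A_i)$ to be a unit vector in $\bigwedge^{k_i}\C^d$, and multiplicativity of norms under tensoring then gives $\|\tau(\mathbf{A})\|=1$. You merely spell out the Gram-determinant and tensor-norm computations that the paper treats as standard facts, so the content is identical.
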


Now we compute the gradient of $\OFP$, first by computing the extrinsic gradient of its extension to the entire vector space containing $\cfframes$:

\begin{lem}\label{lem:extrinsic gradient}
	Define $\EFP: \C^{k_1 \times d} \times \dots \times \C^{k_N \times d} \to \R$ to be the extension of $\OFP$ to all of $\C^{k_1 \times d} \times \dots \times \C^{k_N \times d}$ given by $\EFP(A_1, \dots , A_N) := \left\|\sum_{i=1}^N A_i^\ast A_i \right\|^2$. Its gradient is
	\begin{equation}\label{eqn:EFP_gradient}
		\nabla \EFP(\mbf{A}) =  (4 A_1 S_{\mbf{A}}, \ldots, 4 A_N S_{\mbf{A}}).
	\end{equation}
\end{lem}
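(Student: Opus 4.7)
The plan is to verify the formula by a direct computation of the directional derivative of $\EFP$ at $\mbf{A}$ in an arbitrary direction $\mbf{H} = (H_1,\dots,H_N) \in \C^{k_1 \times d} \times \dots \times \C^{k_N \times d}$, and then read off the gradient using the identification with the ambient space via the real part of the Frobenius inner product $\langle X, Y \rangle = \tr(Y^\ast X)$.

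First I would rewrite $\EFP(\mbf{A}) = \|S_{\mbf{A}}\|^2 = \tr(S_{\mbf{A}}^2)$, using that $S_{\mbf{A}} = \sum_i A_i^\ast A_i$ is Hermitian. A product-rule computation gives
\[
\left.\frac{d}{dt}\right|_{t=0} S_{\mbf{A}+t\mbf{H}} \;=\; \sum_{i=1}^N \bigl(H_i^\ast A_i + A_i^\ast H_i\bigr),
\]
so that
\[
\left.\frac{d}{dt}\right|_{t=0} \EFP(\mbf{A}+t\mbf{H}) \;=\; 2\,\tr\!\left(S_{\mbf{A}} \sum_{i=1}^N (H_i^\ast A_i + A_i^\ast H_i)\right).
\]
Next I would use the cyclic property of the trace and the fact that $S_{\mbf{A}}$ is Hermitian to show that the two terms in the inner sum are complex conjugates of one another. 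Concretely, $\overline{\tr(S_{\mbf{A}} H_i^\ast A_i)} = \tr(A_i^\ast H_i S_{\mbf{A}}) = \tr(S_{\mbf{A}} A_i^\ast H_i)$, so the directional derivative collapses to
\[
4\,\sum_{i=1}^N \Re\,\tr\!\bigl(S_{\mbf{A}} A_i^\ast H_i\bigr) \;=\; 4\,\sum_{i=1}^N \Re\,\tr\!\bigl(H_i (A_i S_{\mbf{A}})^\ast\bigr) \;=\; \sum_{i=1}^N \Re\,\langle 4 A_i S_{\mbf{A}}, H_i\rangle.
\]

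Finally, since this holds for every $\mbf{H}$, the component of $\nabla\EFP(\mbf{A})$ in the $i$th factor must be $4 A_i S_{\mbf{A}}$, yielding (\ref{eqn:EFP_gradient}). The only thing to be slightly careful about is the conventions: the gradient on the real vector space $\C^{k_i \times d} \cong \R^{2 k_i d}$ is characterized via the real part of the Frobenius inner product, and the pairing of Hermitian matrices with Hermitian directions $H_i^\ast A_i + A_i^\ast H_i$ needs to be manipulated so that each $H_i$ appears linearly (not conjugate-linearly); the conjugation trick above is what ensures this. There is no serious obstacle here—this is a routine matrix-calculus exercise and the main content of the lemma is just fixing the constant and the form of the answer for use in the subsequent intrinsic gradient computation on $\cfframes$.
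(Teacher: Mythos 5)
Your proof is correct and follows essentially the same route as the paper: a direct computation of the directional derivative of $\EFP$, using the real part of the Frobenius pairing to identify the gradient, with the key step being the observation that $\Re\langle S_{\mbf{A}}, H_i^\ast A_i + A_i^\ast H_i\rangle = 2\Re\langle S_{\mbf{A}}, A_i^\ast H_i\rangle$. The only cosmetic difference is that you phrase everything in terms of traces (writing $\|S_{\mbf{A}}\|^2 = \tr(S_{\mbf{A}}^2)$) while the paper expands $\|S_{\mbf{A}+\epsilon\mbf{B}}\|^2$ as an inner product and then differentiates; the underlying algebra is identical.
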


\begin{proof}
    Let $\mbf{B} = (B_1,\ldots,B_N) \in T_{\mbf{A}} (\C^{k_1 \times d} \times \dots \times \C^{k_N \times d}) \approx \C^{k_1 \times d} \times \dots \times \C^{k_N \times d}$ and consider the directional derivative of $\EFP$ at $\mbf{A}$ in the direction $\mbf{B}$. In the following, we slightly abuse notation and generically use $\langle \cdot, \cdot \rangle$ for the Frobenius inner product on matrix spaces of various dimensions. We have 
    \begin{align}
        \left. \frac{d}{d\epsilon} \right|_{\epsilon = 0} \EFP(\mbf{A} + \epsilon \mbf{B}) &= \left. \frac{d}{d\epsilon} \right|_{\epsilon = 0} \left\| \sum_{i=1}^N (A_i + \epsilon B_i)^\ast (A_i + \epsilon B_i) \right\|^2 \nonumber \\
        &= \left. \frac{d}{d\epsilon} \right|_{\epsilon = 0} \left<\sum_{i=1}^N (A_i^\ast A_i + \epsilon (A_i^\ast B_i + B_i^\ast A_i) + \epsilon^2 B_i^\ast B_i),\sum_{i=1}^N (A_i^\ast A_i + \epsilon (A_i^\ast B_i + B_i^\ast A_i) + \epsilon^2 B_i^\ast B_i)\right> \nonumber \\
        &= 2 \mathrm{Re} \left<\sum_{i=1}^N A_i^\ast A_i, \sum_{i=1}^N (A_i^\ast B_i + B_i^\ast A_i) \right> \nonumber \\
        &= 4 \mathrm{Re} \sum_{i=1}^N \left< S_{\mbf{A}}, A_i^\ast B_i \right> \label{eqn:gradient_computation_1} \\
        &= \mathrm{Re} \sum_{i=1}^N \left<4 A_i S_{\mbf{A}}, B_i\right> = \mathrm{Re} \left<(4A_1 S_{\mbf{A}},\ldots,4 A_N S_{\mbf{A}}), (B_1,\ldots,B_N) \right> \label{eqn:gradient_computation_2}.
    \end{align}
The equality \eqref{eqn:gradient_computation_1} makes the replacement $S_{\mbf{A}} = \sum_{i=1}^N A_i^\ast A_i$, uses linearity in the second coordinate to move the summation out of the inner product and uses properties of the (real part of the) inner product to equate $\mathrm{Re}\langle \cdot, A_i^\ast B_i + B_i^\ast A_i\rangle = 2 \mathrm{Re}\langle \cdot, A_i^\ast B_i \rangle$. The quantity  \eqref{eqn:gradient_computation_2} is the (real part of the) standard inner product for $\C^{k_1 \times d} \times \dots \times \C^{k_N \times d}$ applied to $(4A_1 S_{\mbf{A}},\ldots,4 A_N S_{\mbf{A}})$ (the claimed formula for the gradient) and $\mbf{B}$. This implies \eqref{eqn:EFP_gradient}. 
\end{proof}

And now the intrinsic gradient:

\begin{lem}\label{lem:gradient}
	The Riemannian gradient of $\OFP: \cfframes \to \R$, is
	\[
		\grad \OFP(\mbf{A}) = \left(4(A_1 S_{\mbf{A}} - (A_1 S_{\mbf{A}} A_1^\ast) A_1), \ldots, 4(A_1 S_{\mbf{A}} - (A_1 S_{\mbf{A}} A_1^\ast) A_1)\right).
	\]
\end{lem}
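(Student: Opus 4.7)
The plan is to obtain $\grad \OFP$ by orthogonally projecting the ambient (extrinsic) gradient $\nabla \EFP$ from \Cref{lem:extrinsic gradient} onto the tangent space of $\cfframes$, since $\OFP$ is the restriction of $\EFP$ to $\cfframes$ and the metric on $\cfframes$ is the restriction of the ambient Frobenius metric.

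First I would observe that $\cfframes$ sits as an open submanifold of the product of complex Stiefel manifolds $\prod_{i=1}^N \{A_i \in \C^{k_i \times d} : A_i A_i^\ast = I_{k_i}\}$. Indeed, using an SVD argument one checks that $P_i = A_i^\ast A_i$ is a rank-$k_i$ orthogonal projector if and only if $A_i$ has orthonormal rows, and positive-definiteness of $S_{\mbf{A}}$ carves out the open subset $\cfframes$. Consequently the tangent space at $\mbf{A}$ is the product of the individual Stiefel tangent spaces, so it suffices to project each component of $\nabla \EFP(\mbf{A})$ onto its corresponding Stiefel factor.

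Next I would compute the tangent-normal decomposition for a single factor. Differentiating the constraint $A_i A_i^\ast = I_{k_i}$ gives the tangent space
\[
T_{A_i} = \{B \in \C^{k_i \times d} : A_i B^\ast + B A_i^\ast = 0\},
\]
i.e. matrices $B$ with $A_i B^\ast$ skew-Hermitian. Using the Frobenius inner product and the fact that $\mathrm{Re}\,\mathrm{tr}(S X) = 0$ whenever $S$ is Hermitian and $X$ is skew-Hermitian, I would show that the normal space is $\{S A_i : S \in \C^{k_i \times k_i}\text{ Hermitian}\}$. Given $M_i \in \C^{k_i \times d}$, imposing $M_i - S A_i \in T_{A_i}$ and using $A_i A_i^\ast = I_{k_i}$ determines the Hermitian matrix $S = \tfrac{1}{2}(A_i M_i^\ast + M_i A_i^\ast)$, yielding the projection formula
\[
\pi_{A_i}(M_i) = M_i - \tfrac{1}{2}(A_i M_i^\ast + M_i A_i^\ast) A_i.
\]

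Finally I would apply $\pi_{A_i}$ to the $i$th component $4 A_i S_{\mbf{A}}$ of $\nabla \EFP(\mbf{A})$. Since $S_{\mbf{A}}$ is Hermitian, both $A_i (4 A_i S_{\mbf{A}})^\ast$ and $(4 A_i S_{\mbf{A}}) A_i^\ast$ equal $4 A_i S_{\mbf{A}} A_i^\ast$, so the correction term collapses to $4 (A_i S_{\mbf{A}} A_i^\ast) A_i$, giving exactly the claimed expression. The only genuinely nontrivial step is the identification of the normal bundle of the Stiefel factor; the rest is direct matrix manipulation.
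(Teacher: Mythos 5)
Your proof is correct, and it is actually a bit more careful than the paper's one-line argument. The paper asserts that the tangent-space projection on the $i$th block is accomplished by right-multiplication by $(\Id_d - A_i^\ast A_i)$, i.e., by projecting the rows onto $\row(A_i)^\perp$. But the image of that map is $\{C : C A_i^\ast = 0\}$, which is a \emph{proper} subspace of the Stiefel tangent space $\{B : A_i B^\ast + B A_i^\ast = 0\}$ that you correctly identify (the missing directions are the $\unitary(k_i)$-orbit directions $\{\xi A_i : \xi \in \mathfrak{u}(k_i)\}$). You instead derive the genuine normal bundle $\{SA_i : S \text{ Hermitian}\}$ and the full tangent projection $M \mapsto M - \tfrac12(A_i M^\ast + M A_i^\ast)A_i$. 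The two formulas coincide on $4A_i S_{\mbf{A}}$ precisely because $S_{\mbf{A}}$ is Hermitian, so that $A_i(4A_i S_{\mbf{A}})^\ast = (4A_i S_{\mbf{A}})A_i^\ast$ --- equivalently, $\OFP$ depends only on the $P_i = A_i^\ast A_i$ and is therefore $\unitary(k_i)$-invariant, so its extrinsic gradient is already orthogonal to the orbit directions. Your derivation makes this cancellation explicit rather than relying on it silently, which is a small but real improvement over the paper's reasoning; the final formula is the same.
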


\begin{proof}
	The Riemannian gradient $\grad \OFP(\mbf{A})$ is  the projection of the extrinsic gradient $\nabla \EFP(\mbf{A})$ onto the tangent space to $\cfframes$ at $\mbf{A}$. This means that on the $i$th block we need to project $4A_i S_{\mbf{A}}$ onto the orthogonal complement of the row span of $A_i$. This orthogonal projection is accomplished by right-multiplying by $(\Id_d - A_i^\ast A_i)$, so the projection is $4A_i S_{\mbf{A}}(\Id_d - A_i^\ast A_i) = 4\left(A_i S_{\mbf{A}} - \left(A_i S_{\mbf{A}} A_i^\ast\right)A_i\right)$. The result follows.
\end{proof}

\begin{prop}\label{prop:property ii}
	Suppose $\mbf{A}_0 \in \cfframes$ has property $\mathscr{S}$ and that $\Gamma: \fframes \times [0,\infty) \to \fframes$ is the gradient flow defined in \Cref{thm:benedetto fickus}. Then $\mbf{A}_\infty := \lim_{t\to\infty} \Gamma(\mbf{A}_0,t)$ has property $\mathscr{S}$.
\end{prop}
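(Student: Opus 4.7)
The plan is to exploit the GIT characterization from \Cref{thm:semi-stability equals S}: $\mbf{A}$ has property $\mathscr{S}$ if and only if $\tau(\mbf{A})$ is $\SL(d,\C)$-semi-stable in $V$. It therefore suffices to show that $\tau(\mbf{A}_\infty)$ is semi-stable, which by \Cref{prop:semistable orbits} will follow once we place $\tau(\mbf{A}_\infty)$ in the $V$-closure of the $\SL(d,\C)$-orbit of $\tau(\mbf{A}_0)$.

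The first step is to intertwine the gradient flow with a complex group action. Using the Riemannian gradient formula in \Cref{lem:gradient}, the flow reads $\dot A_i = -4 A_i S_{\mbf{A}}(I_d - P_i)$. I would define $g_t \in \GL(d,\C)$ by $g_0 = I$ and $\dot g_t = -4 S_{\mbf{A}(t)} g_t$, and choose time-dependent ``gauge'' factors $B_i(t) \in \GL(k_i,\C)$ restoring orthonormality of the rows. Matching the resulting ODE for $A_i(t) = B_i(t) A_i(0) g_t^\ast$ against the flow then pins down $B_i(t)$ uniquely (up to $\unitary(k_i)$), and one reads off that the subspaces evolve by a common action, $\mathcal{S}_i(t) = g_t \mathcal{S}_i(0)$. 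Under the Pl\"ucker--Segre map this becomes $\tau(\mbf{A}(t)) = c_t\, g_t \cdot \tau(\mbf{A}_0)$ for a scalar $c_t \in \C^\ast$ fixed (up to phase) by the unit-norm condition of \Cref{lem:bounded away from zero}; splitting $g_t = (\det g_t)^{1/d}\hat g_t$ with $\hat g_t \in \SL(d,\C)$ and absorbing the central scalar yields $\tau(\mbf{A}(t)) = \lambda_t\,\hat g_t \cdot \tau(\mbf{A}_0)$, so every $\tau(\mbf{A}(t))$ lies in $\C^\ast\cdot \SL(d,\C)\cdot\tau(\mbf{A}_0)$.

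The second step is a Kempf--Ness style control of the limit. A short calculation shows $\tfrac{d}{dt}\|g_t\cdot\tau(\mbf{A}_0)\|^2 \leq 0$: since $S_{\mbf{A}(t)}$ is positive semidefinite, so is its derived representation on $V = \bigotimes_i \bigwedge^{k_i}\C^d$ (its eigenvalues on each wedge factor are sums of nonnegative eigenvalues of $S_{\mbf{A}(t)}$). Combining this monotonicity with the Liouville-type identity $\det g_t = e^{-4nt}$ (from $\tr S_{\mbf{A}(t)} = n$) and the uniform lower bound $\|\hat g\cdot\tau(\mbf{A}_0)\| \geq \delta > 0$ over all $\hat g\in\SL(d,\C)$ afforded by semi-stability of $\tau(\mbf{A}_0)$, I would control the scalar $\lambda_t$ and extract $\tau(\mbf{A}_\infty) \in \overline{\SL(d,\C)\cdot\tau(\mbf{A}_0)}$ (closure in $V$, not merely in $\mathbb{P}(V)$). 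Since $\tau(\mbf{A}_\infty)$ has unit norm it is nonzero, so \Cref{prop:semistable orbits} yields its semi-stability, and a final appeal to \Cref{thm:semi-stability equals S} delivers property $\mathscr{S}$ for $\mbf{A}_\infty$.

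The main obstacle is precisely this last limit step. Merely passing to a projective limit is too coarse: orbit closures in $\mathbb{P}(V)$ of a semi-stable point can acquire non-semi-stable boundary points (the familiar toy example being the $\SL(2,\C)$-degeneration $[xy + \epsilon x^2] \to [x^2]$, where the limit has vanishing discriminant). What rescues the argument is the genuine dynamical content of the Kempf--Ness flow on $V$---the monotonicity of $\|g_t\cdot\tau(\mbf{A}_0)\|$ coupled with the exponential decay of $\det g_t$---which rules out such projective escape and forces the limit into the honest $V$-closure of the $\SL(d,\C)$-orbit, where \Cref{prop:semistable orbits} has teeth.
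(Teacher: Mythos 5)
Your overall strategy matches the paper's: observe that the gradient flow line stays inside the $\SL(d)\times\prod_i\GL(k_i)$-orbit of $\mbf{A}_0$, push this through $\tau$ to land inside $(\SL(d)\times\C^\times)\cdot\tau(\mbf{A}_0)$, and then invoke \Cref{prop:semistable orbits} together with scaling-invariance of semi-stability. The paper derives the orbit containment by exhibiting $\grad\OFP(\mbf{A})$ as the derivative of the one-parameter family $\epsilon\mapsto\exp(-\epsilon A_iS_{\mbf{A}}A_i^\ast)A_i\exp(\epsilon S_{\mbf{A}})$, then normalizing the $\GL(d)$ factor into $\SL(d)$; your ODE $\dot g_t=-4S_{\mbf{A}(t)}g_t$ with time-dependent gauge factors $B_i(t)$ is the integrated form of exactly this observation, so this part is the same argument in a different wrapper.

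Where you diverge from the paper is in the treatment of the limit, and there you have correctly put your finger on the delicate point: $\tau(\mbf{A}_\infty)$ being a nonzero limit of points in $\C^\times\,\SL(d)\cdot\tau(\mbf{A}_0)$ is \emph{not} by itself enough to conclude semi-stability, precisely because the scalar factor can escape to $0$ while the $\SL(d)$-part escapes to infinity. The paper's final sentence passes over this quickly; your instinct to replace it with a Kempf--Ness monotonicity is sound. However, the specific monotonicity you propose does not close the gap. You work with $g_t\in\GL(d)$ and argue $\tfrac{d}{dt}\|g_t\cdot\tau(\mbf{A}_0)\|^2\le 0$ from positive semi-definiteness of $S_{\mbf{A}(t)}^V$. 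That is true, but it is vacuous once you normalize: writing $\hat g_t=(\det g_t)^{-1/d}g_t\in\SL(d)$ and using $\det g_t=e^{-4nt}$, one gets $\|g_t\cdot\tau(\mbf{A}_0)\|=e^{-4n^2t/d}\|\hat g_t\cdot\tau(\mbf{A}_0)\|$, so your monotonicity $\|g_t\cdot\tau(\mbf{A}_0)\|\le 1$ only yields $\|\hat g_t\cdot\tau(\mbf{A}_0)\|\le e^{4n^2t/d}$, and the scalar $\lambda_t=1/\|\hat g_t\cdot\tau(\mbf{A}_0)\|$ could still decay to $0$. Combining with the lower bound $\|\hat g_t\cdot\tau(\mbf{A}_0)\|\ge\delta$ from semi-stability gives $\lambda_t\le 1/\delta$ but nothing from below.

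What actually rescues the argument is a monotonicity for the $\SL(d)$-normalized element, and this comes not from positive semi-definiteness of $S^V$ but from a moment-map identity. One computes $\dot{\hat g}_t=-4\tilde S_{\mbf{A}(t)}\hat g_t$ with $\tilde S_{\mbf{A}}:=S_{\mbf{A}}-\tfrac{n}{d}\Id_d$ the traceless part, hence
\[
\frac{d}{dt}\|\hat g_t\cdot\tau(\mbf{A}_0)\|^2 = -8\,\bigl\langle\tilde S_{\mbf{A}(t)}^V\bigl(\hat g_t\cdot\tau(\mbf{A}_0)\bigr),\,\hat g_t\cdot\tau(\mbf{A}_0)\bigr\rangle.
\]
Now $\tilde S^V$ is traceless and \emph{not} semi-definite in general, so this is not obviously $\le 0$. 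But $\hat g_t\cdot\tau(\mbf{A}_0)$ is a nonzero scalar multiple of $\tau(\mbf{A}(t))$, and for a fusion frame $\mbf{A}$ one has, by the usual decomposable-tensor computation, $\langle X^V\tau(\mbf{A}),\tau(\mbf{A})\rangle=\sum_i\tr(XP_i)=\langle X,S_{\mbf{A}}\rangle$ for any Hermitian $X$. Taking $X=\tilde S_{\mbf{A}}$ gives $\langle\tilde S_{\mbf{A}}^V\tau(\mbf{A}),\tau(\mbf{A})\rangle=\|\tilde S_{\mbf{A}}\|^2\ge 0$, so the derivative above is $\le 0$. Hence $\|\hat g_t\cdot\tau(\mbf{A}_0)\|\le 1$ for all $t$, $\lambda_t$ is bounded away from $0$, and the limit lies in the honest $V$-closure of $\SL(d)\cdot\tau(\mbf{A}_0)$, where \Cref{prop:semistable orbits} applies. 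So: right route, right diagnosis of the obstacle, but the decisive estimate has to live on the $\SL(d)$-normalized flow and use the moment-map identity rather than semi-definiteness of the derived representation.
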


\begin{proof}
    Using \Cref{lem:gradient}, we have that the $i$th block of $\grad \OFP(\mbf{A})$ is 
	\begin{equation}\label{eqn:gradient_to_exponential}
		4\left(A_i S_{\mbf{A}} - \left(A_i S_{\mbf{A}} A_i^\ast\right)A_i\right) =  \left. \frac{d}{d \epsilon} \right|_{\epsilon=0} 4\exp(-\epsilon A_i S_{\mbf{A}} A_i^\ast) A_i \exp(\epsilon S_{\mbf{A}}).
	\end{equation}
	Exponentiating a matrix always yields an invertible matrix, so \eqref{eqn:gradient_to_exponential} tells us that $\grad \OFP(\mbf{A})$ is tangent to the orbit $\left(\GL(d) \times \prod_{i=1}^N \GL(k_i)\right) \cdot \mbf{A}$, where $(g,(h_1,\dots , h_N)) \in \GL(d) \times \prod_{i=1}^N \GL(k_i)$ acts on $\prod_{i=1}^N \C^{k_i \times d}$ by 
	\[
		(g,(h_1,\dots , h_N)) \cdot \mbf{A} = (h_1 A_1 g^\ast, \dots , h_N A_n g^\ast).
	\]
	
	For $(g,(h_1,\dots , h_N)) \in \GL(d) \times \prod_{i=1}^N \GL(k_i)$, we normalize $g$ to get something in $\SL(d)$ without changing the action by moving a scalar to the other factor: $\left((\det g)^{-1/d}g, \left((\overline{\det g})^{-1/d} h_1, \dots , (\overline{\det g})^{-1/d}, h_N\right)\right) \in \SL(d)  \times \prod_{i=1}^N \GL(k_i)$ and
	\[
		(g,(h_1,\dots , h_N)) \cdot \mbf{A} = \left((\det g)^{-1/d}g, \left((\overline{\det g})^{-1/d} h_1, \dots , (\overline{\det g})^{-1/d}, h_N\right)\right) \cdot \mbf{A},
	\]
	so $\grad \OFP(\mbf{A})$ is actually in the tangent space to $\left(\SL(d) \times \prod_{i=1}^N \GL(k_i)\right) \cdot \mbf{A}$ at $\mbf{A}$. Therefore,
	\[
		\Gamma(\mbf{A}_0,t) \in \left(\SL(d) \times \prod_{i=1}^N \GL(k_i)\right) \cdot \mbf{A}_0 \quad \text{ for all } t \geq 0.
	\]
	
	If $(g,(h_1,\dots , h_N) \in \SL(d) \times \prod_{i=1}^N \GL(k_i)$, then
	\begin{align*}
		\tau((g,(h_1,\dots , h_N)) \cdot \mbf{A}) & = \left( \left( g a_{11}^\ast h_1^\ast\right) \wedge \dots \wedge \left( g a_{1k_1}^\ast h_1^\ast\right)\right) \otimes \dots \otimes \left( \left( g a_{N1}^\ast h_N^\ast\right) \wedge \dots \wedge \left( g a_{Nk_N}^\ast h_N^\ast\right)\right)  \\
		& = \left( \det(h_1^\ast) \left( g a_{11}^\ast\right) \wedge \dots \wedge \left( g a_{1k_1}^\ast\right)\right) \otimes \dots \otimes \left( \det(h_N^\ast) \left( g a_{N1}^\ast\right) \wedge \dots \wedge \left( g a_{Nk_N}^\ast\right)\right) \\
		& = \prod_{i=1}^N \det(h_i^\ast)\left[ \left(\left( g a_{11}^\ast\right) \wedge \dots \wedge \left( g a_{1k_1}^\ast\right)\right) \otimes \dots \otimes \left( \left( g a_{N1}^\ast\right) \wedge \dots \wedge \left( g a_{Nk_N}^\ast\right)\right)\right] \in \left( \SL(d) \times \C^\times\right)\cdot \tau(\mbf{A}),
	\end{align*}
	where $(g,a) \in \SL(d) \times \C^\times$ acts on $\left( \bigwedge^{k_1} \C^d \right) \otimes \dots \otimes \left(\bigwedge^{k_N} \C^d\right)$ by
	\[
		(g,a) \cdot \left[\left( v_{11} \wedge \dots \wedge v_{1k_1}\right) \otimes \dots \otimes \left( v_{N1} \wedge \dots \wedge v_{Nk_N}\right)\right] := a \left[ \left( \left( g v_{11}\right) \wedge \dots \wedge \left( g v_{1k_1}\right) \right) \otimes \dots \otimes \left( \left( g v_{N1} \right) \wedge \dots \wedge \left(g v_{Nk_N}\right)\right)\right].
	\]
	
	This implies that $\tau(\Gamma(\mbf{A}_0,t)) \in \left( \SL(d) \times \C^\times\right)\cdot \tau(\mbf{A}_0)$ for all $t \geq 0$. Since $\tau(\Gamma(\mbf{A}_0,t))$ is a unit vector for all $t$ by \Cref{lem:bounded away from zero}, so is the limit $\tau(\mbf{A}_\infty)$. 
	
	Since everything is bounded away from the origin and since rescaling a vector by a nonzero scalar does not affect its semistability with respect to the $\SL(d)$-action, \Cref{prop:semistable orbits} implies that the entire gradient flow line, including $\tau(\mbf{A}_\infty)$, is semi-stable, and hence $\mbf{A}_\infty$ has property $\mathscr{S}$ by \Cref{thm:semi-stability equals S}.
\end{proof}

\subsubsection{Property $\mathscr{S}$ Satisfies \ref{it:last good property}}\label{subsub:property iii}

Finally, we need to show that critical points which are not global minima do not satisfy property $\mathscr{S}$. We do this by showing that, if $\mbf{A}$ is a non-minimizing critical point, then $\tau(\mbf{A})$ is not semi-stable with respect to the $\SL(d)$ action. Semi-stability is defined in terms of the full group orbit, but this is typically much too big to be tractable. Instead, it is preferable to work with one-parameter subgroups, which remarkably turn out to be sufficient.

We briefly return to discussing a general reductive group $G$ acting linearly on a vector space $V$. A \emph{one-parameter subgroup} of $G$ is a homomorphism of algebraic groups $\lambda: \C^\times \to G$. Any such homomorphism induces a decomposition $V = \bigoplus_{i \in I} V_i$ and integer weights $w: I \to \Z$ so that, for every $i \in I$, $v \in V_i$, and $t \in \C^\times$,
\[
	\lambda(t) \cdot v = t^{w(i)}v.
\]
It follows immediately from the definition that a nonzero vector $v \in V$ is unstable under the action of $G$ if there exists a one-parameter subgroup $\lambda$ so that
\[
	\lim_{t\to 0} \lambda(t) \cdot v = 0.
\]
Much less obvious is that the converse holds:

\begin{thm}[Hilbert--Mumford criterion~\cite{mumford_geometric_1994,hilbert_ueber_1893}]\label{thm:hilbert-mumford}
	$v \in V\backslash\{0\}$ is unstable under the action of $G$ if and only if there exists a one-parameter subgroup $\lambda$ of $G$ so that
	\[
		\lim_{t \to 0} \lambda(t) \cdot v = 0.
	\]
\end{thm}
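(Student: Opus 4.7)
The plan is to prove the two directions separately. The reverse direction is immediate: since $\lambda(t) \cdot v \in G \cdot v$ for every $t \in \C^\times$, the hypothesis $\lim_{t \to 0} \lambda(t) \cdot v = 0$ forces $0 \in \overline{G \cdot v}$, which by definition means $v$ is unstable; this was essentially noted in the paragraph preceding the theorem statement.

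The forward direction is the substantive content, and I would approach it via the Kempf--Ness moment-map strategy, which fits naturally with the symplectic flavor of the rest of the paper. Fix a maximal compact subgroup $K \subset G$ with Lie algebra $\mathfrak{k}$, together with a $K$-invariant Hermitian inner product on $V$, and consider the function $f(g) := \|g \cdot v\|^2$. The assumption that $v$ is unstable means that $\inf_{g \in G} f(g) = 0$. Using the Cartan decomposition $G = K \cdot \exp(\sqrt{-1}\,\mathfrak{k}) \cdot K$, this infimum can be computed along rays $\exp(t\xi)$ in a fixed maximal torus, parameterized by $\xi \in \sqrt{-1}\,\mathfrak{k}$. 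Writing $v = \sum_i v_i$ in the weight decomposition for this torus, with weights $\chi_i$, the squared norm along such a ray becomes $\sum_i \|v_i\|^2 e^{2t\chi_i(\xi)}$, which tends to $0$ as $t \to \infty$ precisely when $\chi_i(\xi) < 0$ for every $i$ with $v_i \neq 0$.

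The main technical obstacle is turning an arbitrary real direction $\xi \in \sqrt{-1}\,\mathfrak{k}$ satisfying these inequalities into an honest \emph{algebraic} one-parameter subgroup. The set of such $\xi$ is a non-empty convex cone cut out by the rational linear inequalities $\chi_i(\xi) < 0$, since the $\chi_i$ are integral characters of the torus. Consequently one can choose a rational $\xi$ in this cone, and clearing denominators produces an integral cocharacter $\lambda: \C^\times \to G$ of the maximal torus satisfying $\lim_{t \to 0} \lambda(t) \cdot v = 0$. This rationality step, which can alternatively be phrased in terms of the valuative criterion applied to an algebraic curve in $\overline{G \cdot v} \setminus \{0\}$ limiting to the origin, is where the algebraicity of $G$ plays its essential role and is the part I expect to be the hardest to pin down carefully.
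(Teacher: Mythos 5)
The paper does not actually prove this theorem; it only observes the easy direction in the sentence immediately preceding the statement and then cites Mumford and Hilbert for the converse. Your treatment of the reverse implication ($\lambda(t)\cdot v\to 0$ implies $0\in\overline{G\cdot v}$) is correct and coincides with the paper's remark.

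Your sketch of the forward direction, however, has a genuine gap that you have not flagged. You identify the rationality step as ``the part I expect to be the hardest to pin down carefully,'' but in the Kempf--Ness/Cartan-decomposition approach the more serious difficulty is earlier: passing from $\inf_{g\in G}\|g\cdot v\|^2=0$ to the existence of a \emph{single} ray $\exp(t\xi)$ in a \emph{fixed} maximal torus along which the norm tends to $0$. Writing $g=k_1\exp(\xi)k_2$ via the $KAK$ decomposition (what you write as $K\cdot\exp(\sqrt{-1}\,\mathfrak{k})\cdot K$ should really be $KAK$ with $A$ a maximal $\R$-split torus), one has $\|g\cdot v\|=\|\exp(\xi)\,k_2\cdot v\|$, so the infimum is over both $\xi$ \emph{and} $k_2\in K$. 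The weight decomposition you invoke, and hence the weights $\chi_i$ and the coefficients $\|v_i\|^2$, depend on $k_2$. A minimizing sequence $g_n\cdot v\to 0$ may a priori require $k_2$ to vary with $n$, and nothing in your sketch forces a convergent torus direction. This is precisely the point that Kempf, Rousseau, and others had to work to establish (via geodesic convexity of the Kempf--Ness function on $G/K$, or via the valuative criterion and the Iwahori/Cartan decomposition over $\C((t))$), and it cannot be dismissed as ``this infimum can be computed along rays.'' Once a single destabilizing real direction $\xi$ is in hand, the convex-cone/rationality argument you describe to replace $\xi$ by an integral cocharacter is standard and fine. So the overall architecture is sound and is one of the accepted modern proofs, but as written it silently assumes the hardest step.
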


This will be our key tool in proving that property $\mathscr{S}$ satisfies \ref{it:last good property}.

\begin{prop}\label{prop:property iii}
	Suppose $\mbf{A} \in \cfframes$ is a critical point of $\FFP$ which is not tight. Then $\mbf{A}$ does not have property $\mathscr{S}$. In particular, if $\mbf{A}$ is a critical point which is not a global minimum, then it does not have property $\mathscr{S}$.
\end{prop}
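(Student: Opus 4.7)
The plan is to apply the Hilbert--Mumford criterion (\Cref{thm:hilbert-mumford}) together with \Cref{thm:semi-stability equals S}: I will produce a one-parameter subgroup (1-PS) of $\SL(d)$ that drives $\tau(\mbf{A})$ to the origin, showing $\tau(\mbf{A})$ is unstable and hence $\mbf{A}$ lacks property $\mathscr{S}$. The first step exploits criticality: by \Cref{lem:gradient}, $\grad \FFP(\mbf{A}) = 0$ forces $A_i S_{\mbf{A}}(\Id_d - P_i) = 0$ for every $i$, so $S_{\mbf{A}}$ preserves $\mathcal{S}_i^\perp = \ker A_i$, and hence also preserves $\mathcal{S}_i$ (since $S_{\mbf{A}}$ is self-adjoint). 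Equivalently, $P_i$ commutes with $S_{\mbf{A}}$, and each $\mathcal{S}_i$ decomposes along the eigenspaces of $S_{\mbf{A}}$.

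Let $\mu_1 > \cdots > \mu_m > 0$ be the distinct eigenvalues of $S_{\mbf{A}}$ with eigenspaces $E_j$ of dimension $d_j$, and set $k_i^{(j)} := \dim(\mathcal{S}_i \cap E_j)$ and $n_j := \sum_i k_i^{(j)}$. For any $(c_1,\dots,c_m) \in \Z^m$ satisfying $\sum_j d_j c_j = 0$, define a 1-PS $\lambda:\C^\times \to \SL(d)$ acting on $E_j$ by the scalar $t^{c_j}$. Because $\mathcal{S}_i = \bigoplus_j (\mathcal{S}_i \cap E_j)$ is $\lambda(t)$-invariant, $\lambda(t)$ scales the determinant line $\bigwedge^{k_i}\mathcal{S}_i$ containing $\tau_i(A_i)$ by $\prod_j (t^{c_j})^{k_i^{(j)}} = t^{\sum_j c_j k_i^{(j)}}$. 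Therefore $\lambda(t) \cdot \tau(\mbf{A}) = t^{\sum_j c_j n_j} \tau(\mbf{A})$, and this tends to $0$ as $t \to 0$ whenever $\sum_j c_j n_j > 0$.

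It thus suffices to find integer $(c_j)$ with $\sum_j d_j c_j = 0$ and $\sum_j c_j n_j > 0$, which exist (after clearing denominators) iff $(n_j)$ is not a real scalar multiple of $(d_j)$. If instead $n_j = \alpha d_j$ for all $j$, summation gives $\alpha = n/d$. Using $[P_i,S_{\mbf{A}}] = 0$ and $\sum_i P_i = S_{\mbf{A}}$, I then compute
\[
\sum_j n_j \mu_j \;=\; \sum_i \tr(P_i S_{\mbf{A}}) \;=\; \tr(S_{\mbf{A}}^2) \;=\; \sum_j d_j \mu_j^2,
\]
so the proportionality assumption yields $n^2/d = \sum_j d_j \mu_j^2$. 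But Cauchy--Schwarz gives $n^2 = (\sum_j d_j\mu_j)^2 \leq d\sum_j d_j\mu_j^2$, with equality iff all $\mu_j$ agree, which would force $\mbf{A}$ to be tight---contradicting the hypothesis. Hence the required $(c_j)$ exist, producing the destabilizing 1-PS. The crux of the argument is combining the $S_{\mbf{A}}$-invariance of each $\mathcal{S}_i$ at a critical point (which makes $\lambda(t)$ act by a single scalar on each $\tau_i(A_i)$) with the spectral identity $\sum_j n_j\mu_j = \sum_j d_j\mu_j^2$ to rule out proportionality of $(n_j)$ and $(d_j)$ outside the tight regime.
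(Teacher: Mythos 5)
Your proof is correct and reaches the same conclusion by a genuinely different, somewhat more structural route than the paper. The paper isolates the top eigenvalue $\lambda_{\max}$ of $S_{\mbf{A}}$ with eigenspace $E_{\lambda_{\max}}$ of dimension $\ell$, observes that exactly $m$ of the (suitably rotated) rows of the $A_i$ lie in $E_{\lambda_{\max}}$ with $\lambda_{\max}=m/\ell>n/d$, and writes down the explicit two-block one-parameter subgroup $\diag(t^{d-\ell}\Id_\ell,\,t^{-\ell}\Id_{d-\ell})$ which scales $\tau(\mbf{A})$ by $t^{md-n\ell}\to 0$. You instead work with the full eigenspace decomposition, compute the weight of \emph{any} $S_{\mbf{A}}$-adapted diagonal one-parameter subgroup on the line $\bigotimes_i\bigwedge^{k_i}\mathcal{S}_i$, and then characterize exactly when a destabilizing weight vector $(c_j)$ exists --- namely, iff $(n_j)\not\propto(d_j)$. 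Your detour through the identity $\sum_j n_j\mu_j=\tr(S_{\mbf{A}}^2)=\sum_j d_j\mu_j^2$ and Cauchy--Schwarz is correct but can be streamlined: since $[P_i,S_{\mbf{A}}]=0$, each $P_i$ restricts to a projector of rank $k_i^{(j)}$ on $E_j$, so $\mu_j\Id_{d_j}=\left.S_{\mbf{A}}\right|_{E_j}=\sum_i\left.P_i\right|_{E_j}$ and taking traces gives $\mu_j=n_j/d_j$ directly; hence $(n_j)\propto(d_j)$ is immediately equivalent to all $\mu_j$ being equal, i.e.\ to tightness, with no need for the norm identity or Cauchy--Schwarz. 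This observation also shows that the paper's argument is the special case of yours obtained by choosing $c_1=d-d_1$ and $c_j=-d_1$ for $j\geq 2$. What your approach buys is a clean invariant characterization of the destabilizing directions and no need to pass to the unitarily modified $\widetilde{A}_i$; what the paper's buys is an explicit, minimal-fuss destabilizer. Both are sound.
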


\begin{proof}
	Let $\mbf{A} \in \cfframes$ be a critical point of $\OFP$. Then $\grad \OFP(\mbf{A}) = 0$; by \Cref{lem:gradient}, this implies that, for each $i = 1, \dots , N$,
	\[
		0 = A_i S_{\mbf{A}} - A_i S_{\mbf{A}} A_i^\ast A_i = A_i S_{\mbf{A}}( \Id_d - A_i^\ast A_i ).
	\]
	The operator $\Id_d - A_i^\ast A_i$ is orthogonal projection onto the orthogonal complement of $\row(A_i)$, the row space of $A_i$ (since $\mbf{A}$ is a fusion frame, the rows of $A_i$ are orthonormal). The above equation then says that the rows of $A_i S_{\mbf{A}}$ lie in $\row(A_i)$. In other words, $\row(A_i)$ is an invariant subspace for the frame operator $S_{\mbf{A}}$, and hence has an orthonormal basis of eigenvectors of $\left.S_{\mbf{A}}\right|_{\row(A_i)}$, so there exists $U_i \in \unitary(k_i)$ so that the rows of $\widetilde{A}_i := U_i A_i$ are eigenvectors of $\left.S_{\mbf{A}}\right|_{\row(A_i)}$, and hence also of $S_{\mbf{A}} = S_{\widetilde{\mbf{A}}}$. So far, this is not new: the conclusion of the previous sentence is exactly Casazza and Fickus' characterization of the critical points of the fusion frame potential~\cite[Theorem~4]{casazza_minimizing_2009}.
	
	If $\mbf{A}$ is not tight, then the frame operator $S_{\mbf{A}}$ has at least two distinct eigenvalues. Let $\lambda$ be the largest eigenvalue, with corresponding eigenspace $E_\lambda$ of dimension $\ell$ and orthogonal complement $E_\lambda^\bot$ of dimension $d-\ell$. Since the average of the eigenvalues of $S_{\mbf{A}}$ is
	\[
		\frac{1}{d} \tr(S_{\mbf{A}}) = \frac{1}{d} \tr\left(\sum_{i=1}^N A_i^\ast A_i \right) = \frac{1}{d}\sum_{i=1}^N \tr \left(A_i^\ast A_i \right) = \frac{1}{d}\sum_{i=1}^N k_i = \frac{n}{d}
	\]
	and the eigenvalues aren't all equal, we know that the largest eigenvalue $\lambda > \frac{n}{d}$.
	
	 Up to conjugating $S_{\mbf{A}}$ by $U \in \unitary(d)$ (corresponding to right-multiplying each $A_i$ by $U^\ast$), we can make the simplifying assumption that the frame operator is diagonal: $S_{\mbf{A}} = \begin{bmatrix} \lambda \Id_\ell & 0 \\ 0 & S' \end{bmatrix}$, where $S'$ is a diagonal (but not necessarily scalar) matrix. Hence, $E_\lambda = \spa\{e_1, \dots , e_\ell\}$ and $E_\lambda^\bot = \spa\{e_{\ell+1} , \dots , e_d\}$. 
	
	If, for each $i=1,\dots , N$, $\tilde{a}_{i1},\dots, \tilde{a}_{ik_i}$ are the rows of $\widetilde{A}_i$, then, since each $\tilde{a}_{ij}$ is an eigenvector of $S_{\mbf{A}}$ and distinct eigenspaces are orthogonal, the $\tilde{a}_{ij}$ split into two perpendicular groups: those in $E_\lambda$ and those in $E_\lambda^\bot$. Let $\tilde{b}_1, \dots , \tilde{b}_m$ be the collection contained in $E_\lambda$. Then
	\[
		\sum_{i=1}^m \tilde{b}_i^\ast \tilde{b}_i = \begin{bmatrix} \lambda \Id_\ell & 0 \\ 0 & 0 \end{bmatrix}
	\]
	so that $\tilde{b}_1^\ast, \dots , \tilde{b}_m^\ast$ are a $\lambda$-tight frame for $E_\lambda$. Since the $\tilde{b}_i$ are unit vectors,
	\[
		\ell\lambda = \tr \begin{bmatrix} \lambda \Id_\ell & 0 \\ 0 & 0 \end{bmatrix} = \tr(\sum_{i=1}^m \tilde{b}_1^\ast \tilde{b}_m) = \tr(\sum_{i=1}^m \tilde{b}_i\tilde{b}_i^\ast) = m,
	\]
	it follows that $\lambda = \frac{m}{\ell}$ and hence that $\frac{m}{\ell} > \frac{n}{d}$; equivalently, $md-n\ell > 0$.
	
	We're now ready to show that $\mbf{A}$ does not have property $\mathscr{S}$. To see this, consider the 1-parameter subgroup $\lambda: \C^\times \to \SL(d)$ given as a block matrix by
	\[
		\lambda(t) = \begin{bmatrix} t^{d-\ell}\Id_\ell & 0 \\ 0 & t^{-\ell}\Id_{d-\ell} \end{bmatrix}.
	\]
	Since $\tilde{a}_{i1}^\ast \wedge \dots \wedge \tilde{a}_{ik_i}^\ast = \det(U_i^\ast) a_{i1}^\ast \wedge \dots \wedge a_{ik_i}^\ast$, it follows that $\tau(\mbf{A}) = \rho \tau(\widetilde{\mbf{A}})$ for unimodular $\rho = \prod_{i=1}^N \det(U_i)$, and hence
	\begin{multline*}
		\lambda(t) \cdot \tau(\mbf{A}) = \rho\lambda(t) \cdot \tau(\widetilde{\mbf{A}}) = \rho\left(\left(\lambda(t) \tilde{a}_{11}^\ast\right) \wedge \dots \wedge \left(\lambda(t) \tilde{a}_{1k_1}^\ast \right)\right) \otimes \dots \otimes \left(\left(\lambda(t) \tilde{a}_{N1}^\ast\right) \wedge \dots \wedge \left(\lambda(t) \tilde{a}_{Nk_N}^\ast \right)\right) \\
		= t^{m(d-\ell)-(n-m)\ell}\rho\left( \tilde{a}_{11}^\ast \wedge \dots \wedge \tilde{a}_{1k_1}^\ast \right) \otimes \dots \otimes \left( \tilde{a}_{N1}^\ast \wedge \dots \wedge  \tilde{a}_{Nk_N}^\ast \right) = t^{md-n\ell} \rho\tau(\widetilde{\mbf{A}}),
	\end{multline*}
	which goes to zero as $t\to 0$ since $md-n\ell > 0$.
	
	Therefore, $\tau(\mbf{A})$ is not semi-stable, and hence, by \Cref{thm:semi-stability equals S}, $\mbf{A}$ does not have property $\mathscr{S}$. 
	
	Finally, if $\mbf{A}$ is not a global minimum, then it cannot be tight by \Cref{prop:welch bound}, so we see that non-minimum critical points cannot have property $\mathscr{S}$.
\end{proof}

If $d, N$, and $\bk$ are such that $\fframes$ contains no TFFs (see \Cref{sec:admissibility} for conditions on when this occurs), then any global minimum $\mbf{A}$ of $\FFP$ cannot be tight, so \Cref{prop:property iii} shows that $\mbf{A}$ cannot have property $\mathscr{S}$. By the contrapositive of \Cref{prop:property ii}, then, nothing in $\fframes$ that flows to $\mbf{A}$ under the negative gradient flow of $\FFP$ can have property $\mathscr{S}$, either. Since this is true for all global minima, there is some open set containing the global minima which completely avoids the fusion frames with property $\mathscr{S}$. Therefore, the set of fusion frames in $\fframes$ with property $\mathscr{S}$ cannot be dense, and hence, by \Cref{prop:dense}, must be empty. In other words:

\begin{cor}\label{cor:tight iff S}
    $\fframes$ contains fusion frames with property $\mathscr{S}$ if and only if it contains TFFs.
\end{cor}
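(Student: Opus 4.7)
My plan is to prove the biconditional in two directions, leveraging the propositions just established. The forward direction is immediate, while the reverse direction follows from a clean contradiction argument using \Cref{prop:property ii} and \Cref{prop:property iii}.

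For the forward direction, if $\fframes$ contains a TFF $\mbf{A}$, then by \Cref{prop:tight implies S} $\mbf{A}$ itself has property $\mathscr{S}$, so the collection of fusion frames with property $\mathscr{S}$ is non-empty. This holds for $\K = \R$ or $\C$, since \Cref{prop:tight implies S} is stated in that generality.

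For the reverse direction I would argue by contradiction: suppose $\mbf{A}_0 \in \fframes$ has property $\mathscr{S}$; then I will show that $\fframes$ must contain a TFF. Applying \Cref{prop:property ii} (in the complex case; the real case follows by the same orbit argument) to $\mbf{A}_0$ produces a gradient flow limit $\mbf{A}_\infty := \lim_{t\to\infty}\Gamma(\mbf{A}_0, t) \in \fframes$ that still has property $\mathscr{S}$. As the terminal point of a negative gradient trajectory of $\FFP$, $\mbf{A}_\infty$ is necessarily a critical point of $\FFP$. Now the contrapositive of \Cref{prop:property iii} states that any critical point of $\FFP$ that has property $\mathscr{S}$ must be tight, so $\mbf{A}_\infty$ is a TFF in $\fframes$, as required. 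This directly bypasses the basin-of-attraction/density argument sketched in the paragraph preceding the statement, since \Cref{prop:property ii} is already strong enough on its own.

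The step I expect to be most delicate --- though it is already packaged into the hypothesis of \Cref{prop:property ii} --- is verifying that $\mbf{A}_\infty$ is a well-defined point in $\fframes$ rather than escaping to the boundary (where $S_{\mbf{A}}$ would degenerate). Convergence of the negative gradient flow of the real analytic function $\FFP$ to a single critical point is a classical consequence of the \L{}ojasiewicz gradient inequality, and non-escape follows from \Cref{lem:bounded away from zero} together with the orbit analysis used in the proof of \Cref{prop:property ii}.
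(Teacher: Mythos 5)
Your proof is correct, and it takes a genuinely different (and more direct) route than the paper. The paper proves the nontrivial direction by contradiction via a density argument: assuming no TFFs, it observes that global minimizers of $\FFP$ fail property $\mathscr{S}$ by \Cref{prop:property iii}, that the basin of attraction of the global minimizers is an open set free of property-$\mathscr{S}$ points by the contrapositive of \Cref{prop:property ii}, and hence that the property-$\mathscr{S}$ locus is not dense, forcing it to be empty by \Cref{prop:dense}. You instead flow a property-$\mathscr{S}$ fusion frame forward: $\mbf{A}_\infty$ retains property $\mathscr{S}$ by \Cref{prop:property ii}, is a critical point, and then you invoke the contrapositive of the \emph{stronger} first clause of \Cref{prop:property iii} (critical and not tight $\Rightarrow$ no property $\mathscr{S}$), which directly yields that $\mbf{A}_\infty$ is tight. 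This is a nice observation: the proof of \Cref{thm:descent} only uses the weaker ``in particular'' clause to conclude $\mbf{A}_\infty$ is a global minimizer, which is insufficient to deduce tightness without already knowing TFFs exist, whereas your reading of \Cref{prop:property iii} produces a TFF outright. Your approach is shorter and avoids \Cref{prop:dense} entirely, at the cost of leaning once more on convergence of the gradient flow --- which, as you note, is already baked into the hypotheses of \Cref{prop:property ii} and \Cref{thm:descent}, so no new gap is introduced. Both arguments share the same (implicit, and unaddressed by the paper) level of care about passing from the complex to the real case.
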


\subsubsection{Completing the Proof of \Cref{thm:descent}} 
\label{subsub:completing the proof}

We now have all the tools we need to prove that gradient descent limits to a global minimizer.

\begin{proof}[Proof of \Cref{thm:descent}]
	If $\mbf{A}_0 \in \cfframes$ has property $\mathscr{S}$, the limit $\mbf{A}_\infty := \lim_{t \to \infty} \Gamma(\mbf{A}_0, t)$ has property $\mathscr{S}$ by \Cref{prop:property ii}. Since $\mbf{A}_\infty$ is a limit point of the gradient flow, it must be a critical point of $\OFP$. Since it has property $\mathscr{S}$, \Cref{prop:property iii} implies that $\mbf{A}_\infty$ is a global minimizer of $\OFP$.
	
	This proves \Cref{thm:descent} when $\K = \C$. The real case then follows immediately by \Cref{prop:complex implies real}.
\end{proof}

\section{Discussion} 
\label{sec:discussion}

There are choices of $d$, $N$, and $\bk$ for which there are no fusion frames with property $\mathscr{S}$: for example, $d=N=3$ and $\bk = (1,1,2)$. Elements $(A_1, A_2, A_3) \in \mathcal{FF}^{\R^3,(1,1,2)}$ will determine two lines $\ell_1 = \row(A_1)$ and $\ell_2 = \row(A_2)$ and a plane $\mathcal{S} = \row(A_3)$. If $\mathcal{Q}$ is a plane containing $\ell_1$ and $\ell_2$, then it must intersect $\mathcal{S}$ at least in a line, so 
\[
	\frac{1}{\dim \mathcal{Q}}(\dim(\ell_1 \cap \mathcal{Q}) + \dim(\ell_2 \cap \mathcal{Q}) + \dim(\mathcal{S} \cap \mathcal{Q})) \geq \frac{3}{2} > \frac{4}{3} = \frac{\dim \ell_1 + \dim \ell_2 + \dim \mathcal{S}}{3}.
\]
Hence, nothing in $\mathcal{FF}^{\R^3,(1,1,2)}$ has property $\mathscr{S}$, so \Cref{thm:descent} tells us nothing. Moreover, by \Cref{prop:tight implies S}, there are no TFFs in $\mathcal{FF}^{\R^3,(1,1,2)}$. 

\begin{figure}[t]
	\centering
		\includegraphics[height=1.5in]{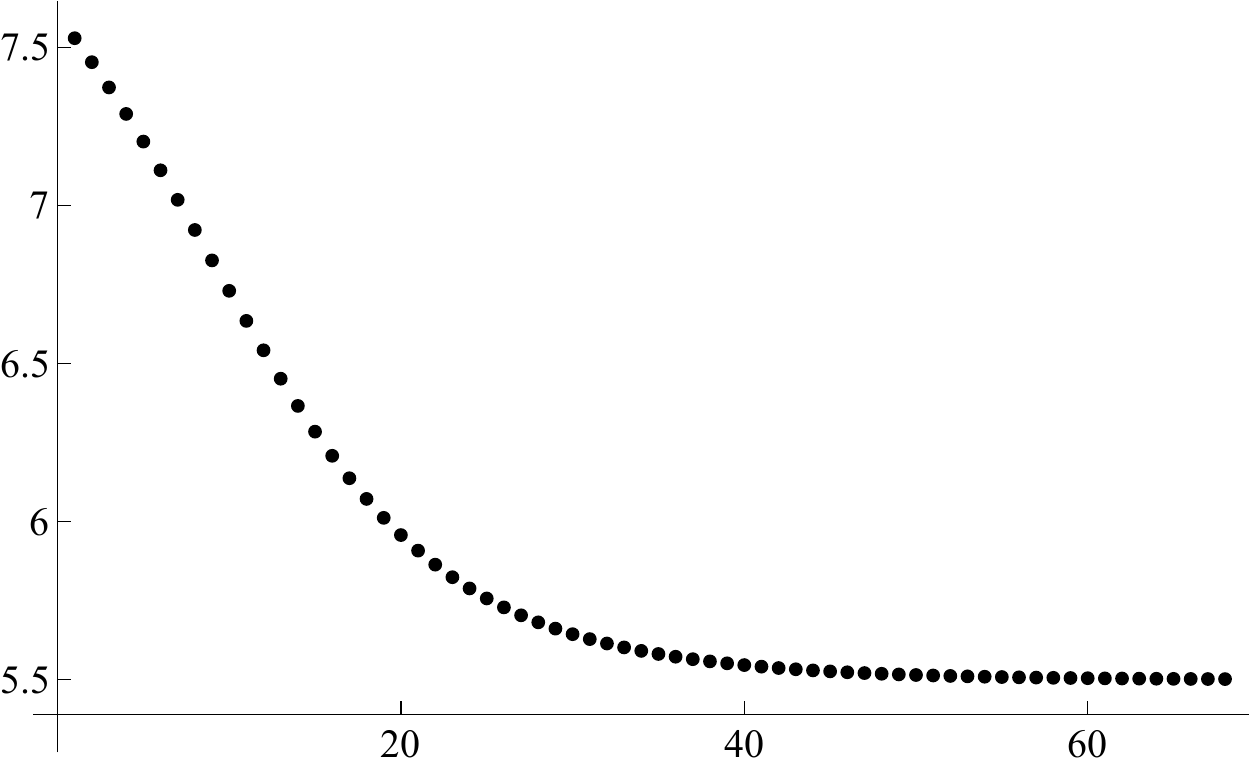}
		\qquad \qquad
		\includegraphics[height=1.5in]{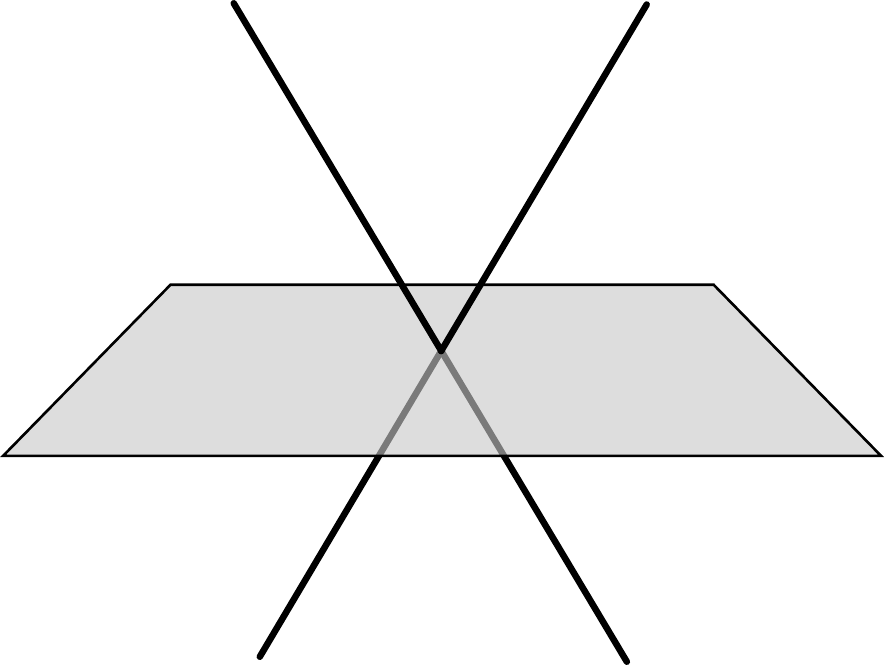}
	\caption{Left: Values of $\FFP$ at each step in a  simple gradient descent with fixed step sizes starting from a random point in $\mathcal{FF}^{\R^d,(1,1,2)}$. Right: The limiting fusion frame of this gradient descent which achieves the minimum possible value of $\frac{11}{2}$.}
	\label{fig:experiment}
\end{figure}

Nonetheless, running gradient descent from random starting points in $\mathcal{FF}^{\R^3,(1,1,2)}$ seems to always find minimizers of $\FFP$ in practice. \Cref{fig:experiment} shows the value of $\FFP$ rapidly decreasing to the global minimum value $\frac{11}{2}$, which is greater than the value $\frac{16}{3}$ that a TFF would have. As observed by Casazza and Fickus~\cite[p.~17]{casazza_minimizing_2009}, the minimum is achieved by fusion frames of the form shown on the right of \Cref{fig:experiment}, where $\ell_1$ and $\ell_2$ lie in a plane $\mathcal{Q}$ perpendicular to $\mathcal{S}$ and the lines $\ell_1$, $\ell_2$, and $\mathcal{Q} \cap \mathcal{S}$ correspond to a tight Mercedes--Benz frame for $\mathcal{Q}$.

This suggests that the fusion frame Benedetto--Fickus theorem may hold even for parameters where there are no fusion frames with property $\mathscr{S}$. We also expect our approach to proving \Cref{thm:descent} will extend to more general spaces $\oframes(\br)$ of operator-valued frames with fixed spectra of the $P_i$, though the details seem more complicated. Hence, we pose the following conjecture:

\begin{conj}
    Let $d,N,k_1, \dots k_N$ be positive integers and fix $\br$. Let $\operatorname{OFP}: \oframes(\br) \to \R$ be the obvious generalization of $\FFP$ to operator-valued frames. Then all local minima of $\OFP$ are global minima.
\end{conj}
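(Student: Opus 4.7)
The plan is to mirror the strategy used to prove \Cref{thm:descent}, replacing Grassmannians with the more general flag manifold structure of the coadjoint orbits $\mathcal{O}_{\br_i}$ and adapting the GIT framework accordingly. The group remains $\SL(d)$, now acting on a weighted projective embedding of $\pframes(\br) = \mathcal{O}_{\br_1} \times \cdots \times \mathcal{O}_{\br_N}$: each coadjoint orbit $\mathcal{O}_{\br_i}$ is a (partial) flag manifold admitting a Borel--Weil-type embedding into a projectivized tensor product of exterior powers $\bigwedge^{\bullet} \C^d$, with tensor weights determined by the gaps between the distinct eigenvalues in $\br_i$; composing with a Segre embedding gives the ambient $\SL(d)$-representation $V_{\br}$.

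The first and most delicate step is to define a generalized property $\mathscr{S}_{\br}$ that specializes to $\mathscr{S}$ when all eigenvalues equal one, and to prove an analog of \Cref{thm:semi-stability equals S}. Writing $P_i = \sum_\alpha s_{i\alpha} \Pi_{i\alpha}$ with $s_{i1} > s_{i2} > \cdots > 0$ the distinct eigenvalues and $\Pi_{i\alpha}$ projection onto the $s_{i\alpha}$-eigenspace $\mathcal{S}_{i\alpha}$, the natural formulation of $\mathscr{S}_{\br}$ is that, for every proper subspace $\mathcal{Q} \subset \K^d$,
\[
    \frac{1}{\dim \mathcal{Q}} \sum_{i=1}^N \sum_{\alpha} (s_{i\alpha} - s_{i,\alpha+1}) \dim\!\big((\mathcal{S}_{i1} \oplus \cdots \oplus \mathcal{S}_{i\alpha}) \cap \mathcal{Q}\big) \leq \frac{1}{d} \sum_{i=1}^N \tr(P_i),
\]
with $s_{i,\alpha+1} := 0$ past the last distinct eigenvalue. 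The sum telescopes to recover $\mathscr{S}$ when all $s_{i\alpha} = 1$. Mumford's stability criterion for products of partial flag manifolds, combined with the Hilbert--Mumford criterion (\Cref{thm:hilbert-mumford}), should establish that $\mbf{A}$ has $\mathscr{S}_{\br}$ if and only if its image in $\mathbb{P}(V_{\br})$ is $\SL(d)$-semi-stable.

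With $\mathscr{S}_{\br}$ and the GIT-theoretic equivalence in hand, the remainder of the argument runs parallel to \Cref{sec:benedetto fickus}. The gradient computation in \Cref{lem:extrinsic gradient} carries over unchanged, and although projecting onto the tangent space of $\oframes(\br)$ is more involved than in \Cref{lem:gradient}, the resulting Riemannian gradient still lies in the tangent direction of the $\GL(d) \times \prod_i \GL(k_i)$-orbit of $\mbf{A}$. After the usual unimodular rescaling to land in $\SL(d)$, we get the analog of \Cref{prop:property ii}: the negative gradient flow preserves $\mathscr{S}_{\br}$. For the analog of \Cref{prop:property iii}, at a critical point whose frame operator is not scalar, one takes the top eigenspace $E_\lambda$ of $S_{\mbf{A}}$ with $\dim E_\lambda = \ell$ and uses the one-parameter subgroup $\lambda(t) = \diag(t^{d-\ell}\Id_\ell, t^{-\ell}\Id_{d-\ell})$; a direct computation gives Hilbert--Mumford weight $\ell d [\lambda - \tr(S_{\mbf{A}})/d] > 0$ because the top eigenvalue strictly exceeds the average, destabilizing $\tau(\mbf{A})$.

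The main obstacle will be the first paragraph's weighted GIT setup: rigorously constructing the embedding $\tau: \oframes(\br) \to V_{\br}$, tracking the integer weights correctly, and verifying that Mumford's criterion takes the clean inequality form above. A secondary technical point is the analog of \Cref{lem:bounded away from zero}: in the fusion frame case the rows of each $A_i$ are orthonormal so $\tau(\mbf{A})$ lies on the unit sphere, but for general $\br$ one must instead use a $K$-invariant norm on $V_{\br}$ rescaled per tensor factor by the $r_{ij}$, under which $\|\tau(\mbf{A})\|$ remains constant (or at least bounded away from zero) on $\oframes(\br)$. Once these are in place, density of $\mathscr{S}_{\br}$-points (the analog of \Cref{prop:dense}, which used only that $\mathscr{S}_{\br}$ cuts out a Zariski-open subset) and the argument in \Cref{subsub:completing the proof} combine to prove the conjecture.
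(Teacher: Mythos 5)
The statement you were asked to prove is posed in the paper as a \emph{conjecture}, not a theorem: the authors explicitly write that they ``expect our approach to proving \Cref{thm:descent} will extend\ldots though the details seem more complicated,'' and leave it open. So there is no proof in the paper to compare against; what you have written is a plausible plan, and it does mirror the strategy the authors themselves suggest, but it is not yet a proof. A few of the ``more complicated details'' you gloss over are exactly where the difficulty lies.

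First, the weights $r_{ij}$ are arbitrary positive reals, so the gaps $s_{i\alpha}-s_{i,\alpha+1}$ appearing in your proposed $\mathscr{S}_{\br}$ are generally irrational. Classical GIT and the Hilbert--Mumford criterion (\Cref{thm:hilbert-mumford}) require an algebraic linearization, hence integer (or at worst rational) weights on the Borel--Weil/Segre embedding. You would either have to argue by rational approximation of $\br$ and control the limit, or abandon the algebraic GIT picture in favor of the Kempf--Ness/Kirwan momentum-map formulation of semi-stability, which works for real weights. Neither is automatic, and switching frameworks changes what ``\Cref{thm:semi-stability equals S}'' and ``\Cref{prop:semistable orbits}'' mean.

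Second, the step where you say ``after the usual unimodular rescaling\ldots we get the analog of \Cref{prop:property ii}'' hides the hardest part. In the fusion frame case the map $\tau$ sends the left action of $h_i\in\GL(k_i)$ on $A_i$ to multiplication by the single scalar $\det(h_i^\ast)$, because $\tau_i$ is a top exterior power. For a genuine partial flag manifold $\mathcal{O}_{\br_i}$, the Borel--Weil embedding is a tensor product of several exterior powers $\bigwedge^{j}\C^d$, and a change of basis adapted to the flag acts by a \emph{block-triangular} matrix whose action on the different tensor factors is by different products of block determinants, not by one global scalar. So the image of the negative gradient flow under $\tau$ need not stay in a single $\SL(d)\times\C^\times$ orbit, and the semi-stability-preservation argument breaks unless you can show the flow stays in the more restricted orbit of the group that actually acts trivially on $\mathbb{P}(V_{\br})$. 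Relatedly, \Cref{lem:gradient} and the identity \eqref{eqn:gradient_to_exponential} used the projector identity $A_iA_i^\ast=\Id_{k_i}$; for general $\br_i$ one has $A_iA_i^\ast$ with spectrum $\br_i$, the tangent-space projection is different, and it is not immediate that $\grad\OFP(\mbf{A})$ is tangent to any $\GL(d)\times\prod\GL(k_i)$-orbit at all. That claim needs its own computation.

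Finally, the analog of \Cref{lem:bounded away from zero} is genuinely more delicate than you indicate: a per-factor rescaling fixes the norm of each $\tau_i(A_i)$ only if the eigenvalue multiplicities of $P_i$ are constant along the flow, and one would need to check that the chosen $K$-invariant norm is simultaneously compatible with the $\SL(d)\times\C^\times$-orbit structure used in the preservation argument. None of these obstacles is obviously fatal, but they are precisely the ``more complicated details'' the authors had in mind, and the proposal as written does not resolve them.
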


We expect that, as in the case of classical frames~\cite{needham2022admissibility}, $\mathcal{OF}^{\mathh^d,\bk}_{\blam}(\br)$ is path-connected, where $\mathh$ is the skew-field of quaternions. However, $\mathcal{OF}^{\R^d,\bk}_{\blam}(\br)$ cannot always be connected. For example, translating a result of Kapovich and Millson~\cite[Theorem~1]{Kapovich:1995wg} to our setting and notation implies that $\mathcal{OF}^{\R^2,(1,1,1,1)}_{(5,5)}(3,3,3,1)$ is not connected. On the other hand, Cahill, Mixon, and Strawn~\cite{Cahill:2017gv} proved that the space $\mathcal{OF}^{\R^d,(1,\dots,1)}_{\left(\frac{N}{d}, \dots , \frac{N}{d}\right)}(1, \dots , 1)$ of real unit-norm tight frames is connected for all $d \geq 2$ and $N \geq d+2$, so there is some interesting characterization of when the $\mathcal{OF}^{\R^d,\bk}_{\blam}(\br)$ are connected still waiting to be discovered.

Cahill, Mixon, and Strawn's proof of the Frame Homotopy Theorem relied heavily on the use of \emph{eigensteps}, which are the eigenvalues of the partial sums of the $P_i$. While eigensteps can be similarly defined for fusion frames and even operator-valued frames, it is not clear whether they would be a useful tool for studying connectedness in the real case. Eigensteps give good coordinates for classical frame spaces because they are \emph{action coordinates}---that is, they are the coordinates of a momentum map for a Hamiltonian action of a half-dimensional torus~\cite{needham_toric_2021}. This means that not only is the image a convex polytope, but the fibers of the eigenstep map are reasonably simple and well-understood. For dimension-counting reasons it seems unlikely that eigensteps could give action coordinates for fusion frames or operator-valued frames, but it is desirable to find similarly useful coordinates in this more general setting.

\subsection*{Acknowledgments}

We are grateful to Dustin Mixon and Soledad Villar for discussions over the past year. Our joint chapter~\cite{mixon_three_2021} served as an important catalyst for this paper. This work was supported by grants from the National Science Foundation (DMS--2107808, Tom Needham; DMS--2107700, Clayton Shonkwiler).

\bibliography{needham_bibliography,shonkwiler-papers}

\end{document}